\providecommand{\algorithmname}{Algorithm}
\numberwithin{equation}{section}
\numberwithin{figure}{section}
\theoremstyle{plain}
\newtheorem{thm}{Theorem}
  \theoremstyle{plain}
  \newtheorem{lem}{Lemma}
  \theoremstyle{definition}
\newtheorem{remark}{Remark}
\newtheorem{proposition}{Proposition}
\def \a {\mathbf a}
\def \x {\mathbf x}
\def \y {\mathbf y}
\def \m {\mathbf m}
\def \n {\mathbf n}
\newcommand{\C}{\ensuremath{\mathbbm{C}}}
\begin{document}

\title[Phase Retrieval from Spectrogram Measurements]{Inverting Spectrogram Measurements via Aliased Wigner Distribution Deconvolution and Angular Synchronization}

\author{Michael Perlmutter, Sami Merhi, Aditya Viswanathan, Mark Iwen}

\keywords{Phase Retrieval, Spectrogram Measurements, Short-Time Fourier Transform (STFT), Wigner Distribution Deconvolution, Angular Synchronization, Ptychography.}
\begin{abstract}We propose a two-step approach for reconstructing a signal $\x\in\mathbb{C}^d$ from 
subsampled short-time Fourier transform
magnitude (spectogram) measurements: 
First, we use an aliased  Wigner distribution deconvolution approach to solve for a portion of the rank-one matrix ${\bf \widehat{{\bf x}}}{\bf \widehat{{\bf x}}}^{*}.$ Second, we use angular syncrhonization to solve for ${\bf \widehat{{\bf x}}}$ (and then for ${\bf x}$ by Fourier inversion).
Using this  method,  we produce two new  efficient phase retrieval algorithms that perform well numerically in comparison to standard approaches and also prove two theorems, one which  guarantees the recovery of discrete, bandlimited  signals ${\bf x}\in\mathbb{C}^{d}$ 
 from fewer than $d$ STFT magnitude measurements and another which establishes a new class of deterministic coded diffraction pattern measurements which are guaranteed to allow efficient and noise robust recovery.
\end{abstract}

\maketitle

\section{Introduction}\label{sec:intro}

The {\it phase retrieval problem}, i.e., reconstructing a signal from phaseless measurements, is at the core of many scientific breakthroughs related to the imaging of cells \cite{van2015imaging}, viruses \cite{seibert2011single}, and nanocrystals \cite{clark2013ultrafast}, 
%
and also advances in crystallographic imaging \cite{Harrison1993}, 
optics \cite{walther1963question}, astronomy \cite{Fienup1987}, quantum
mechanics \cite{Corbett2006}, and speech signal processing
\cite{balan2006signal,griffin1984signal}. 
As a result, many sophisticated algorithms, which achieve great empircal success,  have been developed for solving this problem in applications throughout science and engineering  (see \cite{Fienup_1978_AltProj,GSaxtonAltProj,griffin1984signal} for widely used examples).  Motivated by the  success of these methods, the mathematical community has recently began to study  the challenging problem of designing measurement masks and corresponding reconstruction algorithms with rigorous convergence guarantees and noise robustness properties (see, e.g., the work of Balan, Cand{\`e}s, Strohmer, and others \cite{alexeev2014phase, balan2006signal,candes2013phaselift,  gross2015improved}).  In this paper, we aim to extend the mathematical analysis  of phaseless measurement maps and noise-robust reconstruction algorithms to include a broad class of phaseless Gabor measurements such as those that are utilized in, e.g., ptychographic imaging \cite{Chapman1996,daSilva:15,Rodenburg2008,Rodenburg1992}.

Specifically,  we will develop and analyze several algorithms for recovering (up to a global phase) a signal $\x \in \C^d$  from the magnitudes of its inner products with shifts of  masks that are {\em locally supported} in either physical space or Fourier space.  The local support of these masks in physical space corresponds to the use of concentrated beams  in ptychographic imaging  to measure small portions of a large sample, whereas the local support of these masks in Fourier space simulates the recovery of samples belonging to a special class of deterministic coded diffraction patterns (CDP).  

Following \cite{iwen2018phase,iwen2016fast}, 
 we will  assume that we have a family of measurement masks, or windows, $\mathbf{m_0},\mathbf{m_1},\ldots,$$\mathbf{m_{K-1}} \in \mathbbm{C}^d$ such that for all $k,$ the nonzero entries of either $\mathbf{m_k}$ or $\mathbf{\widehat{m}_k}$ are contained in the set $[\delta]_0$ for some fixed $\delta< \frac{d}{2},$ where  
for any integer $n\geq 0,$ we let 
\begin{equation*}
[n]_0=\{0,1,\ldots,n-1\}
\end{equation*}
denote the set of the first $n$ nonnegative integers. Let $L$ be an integer which divides $d,$ and let  $Y': \mathbbm{C}^d \rightarrow [0,\infty)^{K\times L}$ be the matrix-valued measurement map defined by its coordinate functions
\begin{equation}
Y'_{k,\ell}\coloneqq Y'_{k,\ell}(\mathbf{x}) \coloneqq |\langle S_{\ell a}\mathbf{m_k}, \mathbf{x}\rangle|^2 + N'_{k,\ell},
\label{equ:Zmap}
\end{equation}   
for $k\in [K]_0$ and $\ell\in[L]_0,$  where  $a\coloneqq\frac{d}{L},$  $S_\ell$ is the circular shift operator on $\mathbbm{C}^d$ defined  for $\y \in \C^d$ and $\ell\in\mathbbm{Z}$ by
\begin{equation}\label{eq:defshift}
(S_\ell\mathbf{y})_j \coloneqq y_{\left((j+\ell) \!\!\!\!\mod d\right)},
\end{equation} 
and $N'=\left(N'_{k,\ell}\right)_{k\in[K]_0,\ell\in[L]_0} \in \mathbbm{R}^{K\times L}$ represents an arbitrary perturbation due to, e.g., measurement noise or imperfect knowledge of the masks $\mathbf{m_k}.$


Our goal is to reconstruct $\mathbf{x}$ from these measurements. It is clear that $Y'(\mathbf{x})=Y'(\mathbbm{e}^{\mathbbm{i}\phi}\mathbf{x})$ for all $\phi\in\mathbb{R},$ so at best we can hope to reconstruct $\mathbf{x}$ up to a global phase, i.e., up to the equivalence relation 
\begin{equation*}
\mathbf{x}\sim\mathbf{x'}\text{ if } \mathbf{x}=\mathbbm{e}^{\mathbbm{i}\phi}\mathbf{x'} \text{ for some }\phi\in\mathbb{R}.  
\end{equation*}
Algorithms 1 and 2, presented in Section \ref{sec:recGuar}, will accomplish this goal in the special case where the masks $\mathbf{m_k}$ are obtained by modulating a single mask $\mathbf{m}.$ Specifically, we let $\m\in\mathbb{C}^d,$ and for $k\in[d]_0$ we let  \begin{equation}\label{eq:fouriermask}
\mathbf{m_k}=W_{k}\mathbf{m}
\end{equation}
where  $W_{k}$ is the modulation operator given by 
\begin{equation}\label{eq:defmod}
(W_{k} \mathbf{m})_j \coloneqq   \mathbbm{e}^{\frac{2 \pi \mathbbm{i} j k}{d}}m_j.
\end{equation} 
As we will see, assuming that our masks have this form will allow us to recover $\mathbf{x},$ even when the shift size $a$ is strictly greater than one. Towards this end, we let  $Y: \mathbbm{C}^d \rightarrow [0,\infty)^{d\times d}$ be the matrix-valued measurement map defined by its coordinate functions\begin{equation}
Y_{k,\ell}\coloneqq Y_{k,\ell}(\mathbf{x}) \coloneqq |\langle S_{\ell}W_k\m, \mathbf{x}\rangle|^2 + N_{k,\ell}
\label{equ:Ymap},
\end{equation}   
where analogously to \eqref{equ:Zmap}, $N=(N_{k,\ell})_{0\leq k,\ell\leq d-1}$  represents an arbirtrary perturbation. We note that $Y$ is the special case of $Y'$ where $K=L=d$ and the masks $\mathbf{m_k}$ have the form \eqref{eq:fouriermask}. For positve integers, $K$ and $L$ which divide $d,$ we  let \begin{equation*}
\frac{d}{K}\left[K\right]_{0}\coloneqq\left\{ 0,\frac{d}{K},\frac{2d}{K},\dots,d-\frac{d}{K}\right\},
\quad\text{and }\quad
\frac{d}{L}\left[L\right]_{0}\coloneqq\left\{ 0,\frac{d}{L},\frac{2d}{L},\dots,d-\frac{d}{L}\right\},
\end{equation*}
and we let $Y_{K,L}$ be the $K\times L$ partial measurement matrix obtained by restricting $Y$ to rows in $\frac{d}{K}\left[K\right]_{0}$ and columns in $\frac{d}{L}\left[L\right]_{0}$ so that the $(k,\ell)$-th entry of $Y_{K,L}$ is given by
\begin{equation}\label{eqn:YKLdef}
(Y_{K,L})_{k,\ell}=Y_{\frac{kd}{K},\frac{\ell d}{L}}.
\end{equation}
Similarly, we let $N_{K,L}$ be the $K\times L$ matrix obtained by restricting $N$ to rows and columns in $\frac{d}{K}\left[K\right]_{0}$ and $\frac{d}{L}\left[L\right]_{0}.$

Letting $\omega_k=k\frac{d}{K}$ so that $(Y_{K,L})_{k,\ell}=Y_{\omega_k,\ell a},$ we note that 
\begin{equation}
(Y_{K,L})_{k,\ell}= \left| \left\langle \mathbf{x}, S_{\ell a} W_{\omega_k} \mathbf{m} \right\rangle \right|^2 + N_{\omega_k,\ell a} = \left| \left\langle \mathbf{x}, \mathbbm{e}^{\frac{2 \pi \mathbbm{i} \ell a \omega_k}{d}} W_{\omega_k} S_{\ell a} \mathbf{m} \right\rangle \right|^2 + N_{\omega_k,\ell a} = \left| \left\langle \mathbf{x}, W_{\omega_k} S_{\ell a} \mathbf{m} \right\rangle \right|^2 + N_{\omega_k,\ell a}.
\label{equ:GaborCase}
\end{equation}
Therefore,  $Y_{K,L}$ forms a matrix of STFT magnitude measurements.   Furthermore, when $K=d$ and $\omega_k=k,$ \eqref{equ:Ymap}  also encompasses a large class of masked Fourier magnitude measurements (i.e., CDP measurements) of the form 
\begin{equation}
\left(Y_{K,L}\right)_{k,\ell} =  \left| \left\langle \mathbf{x}, W_{\omega_k} S_{\ell a} \mathbf{m} \right\rangle \right|^2 + N_{k,\ell}  = |\left(F_d ~{\rm Diag}(\mathbf{m'}_\ell) ~\mathbf{x}\right)_k |^2 + \left(N_{K,L}\right)_{k,\ell},
\label{equ:MaskedFouerierMeas}
\end{equation}
where     $\mathbf{m'}_\ell := S_{\ell a} \overline{\mathbf{m}}$  and $F_d$ is the $d \times d$ discrete Fourier transform matrix whose entries are defined by 
\begin{equation}\label{eq:defF}
(F_d)_{j,k} \coloneqq  
\mathbbm{e}^{\frac{-2\pi\mathbbm{i} j k}{d}}.
\end{equation} 
 Measurements similar to \eqref{equ:MaskedFouerierMeas} are considered in, e.g., the recent works by Cand{\`e}s and others \cite{bandeira2014phase,candes2015phase,Candes2014WF,gross2015improved}. However,  their masks  are usually generated randomly, whereas we will consider deterministically designed mask constructed as shifts of a  single  base mask $\mathbf{m}.$

Our method for recovering $\mathbf{x}$ is based on a two-step approach. Following the example of, e.g., \cite{balan2006signal,candes2013phaselift}, we can lift the nonlinear, phaseless measurements \eqref{equ:Zmap} to  linear measurements of the Hermitian rank-one matrix $\mathbf{x}\mathbf{x}^*$. Specifically, it can be shown that
\begin{align*}
	Y'_{k,\ell}(\mathbf{x}) &= 
 \langle \x \x^*, S_{\ell a} \mathbf{m_k} \mathbf{m_k}^* S^*_{\ell a}\rangle + N'_{k,\ell},
\end{align*}
where the inner product above is the Hilbert-Schmidt inner product. Restricting, for the moment, to the case $a = 1$ (i.e. $L = d$)  and assuming that the nonzero entries of $\mathbf{m_k}$ are contained in the set $[\delta]_0$ for all $k$, one can see that every matrix $G \in {\rm span}\left(\{S_\ell \mathbf{m_k} \mathbf{m_k}^* S^*_\ell : \ell\in[d]_0,k\in[K]_0\}\right)$ will have all of its nonzero entries concentrated near  the main diagonal. Specifically, we have $G_{ij} = 0$ unless either $|i-j|<\delta$ or $|i-j|> d-\delta$
  Therefore, letting  $T_\delta : \C^{d \times d} \to \C^{d \times d}$ be the restriction  operator given by \begin{equation*} T_\delta(G)_{ij}
=\begin{cases}
G_{ij} &\text{if } |i-j| <\delta\text{ or } |i-j|>d-\delta\\ 
0 &\text{otherwise}
\end{cases},
  \end{equation*} 
we see  \begin{equation*} Y'_{k,\ell}(\mathbf{x}) = \langle \x \x^*, S_\ell \mathbf{m_k} \mathbf{m_k}^* S^*_\ell \rangle + N'_{k,\ell} = \langle T_{\delta}(\x \x^*), S_\ell \mathbf{m_k} \mathbf{m_k}^* S^*_\ell \rangle + N'_{k,\ell}, \quad (k, \ell) \in [K]_0 \times [d]_0. 
  \end{equation*}
Our lifted, linearized measurements are therefore given by $Y'_{k,\ell}(\mathbf{x})  = \mathcal{A}\big(T_\delta(\x\x^*)\big)_{(k,\ell)} + N'_{k,\ell}$ where  $\mathcal{A} : T_\delta(\C^{d \times d}) \to \C^{K \times d}$ is defined by 
\begin{equation}
(\mathcal{A}(X))_{(k, \ell)} = \langle X, S_\ell \mathbf{m_k} \mathbf{m_k}^* S^*_\ell \rangle\label{eq:linear} \quad \text{for} \quad (k, \ell)\in [K]_0 \times [d]_0 \quad \text{and} \quad X \in T_\delta(\C^{d \times d}).
\end{equation}  
As a result, one can approximately solve for $\x$ up to a global phase factor by $(i)$ evaluating $\mathcal{A}^{-1}$ on $Y'_{k,\ell}(\mathbf{x})$ in order to recover a Hermitian approximation $X_e$ to $T_\delta(\x\x^*)$, and then $(ii)$ applying a noise robust angular synchronization method (e.g., see \cite{singer2011angular,viswanathan2015fast})  to obtain an estimate of $\x$ from $X_e$.  See \cite{iwen2018phase,iwen2016fast} for further details.

The following theorem summarizes previous work using this two-stage  approach for the case where the nonzero entries of the masks $\mathbf{m_k}$ are contained in the set $[\delta]_0$ for all $k \in [K]_0$.

\begin{thm}[See \cite{iwen2018phase,iwen2016fast}]For $\x\in\mathbbm{C}^d$, let
 $\min |\mathbf{x}| \coloneqq  \min_{0\leq j\leq d} |x_j|,$ and set $K = 2\delta - 1$ and $L = d$ so that $a = 1$ in \eqref{equ:Zmap}.  There exists a practical nonlinear reconstruction algorithm that takes in measurements $Y'_{k,\ell}(\mathbf{x})$ for all $(k, \ell)\in [K] \times [d]$ and outputs an estimate $\x_e \in \mathbbm{C}^d$ that always satisfies 
 \begin{equation}
 \min_{\phi \in [0, 2 \pi]} \left\Vert  \x - \mathbbm{e}^{\mathbbm{i} \phi} \x_e \right\Vert_2 \leq C \left( \frac{\Vert \x 
        \Vert_{\infty}}{\min |\x|^2} \right) \left( \frac{d}{\delta} \right)^2 \kappa \| N' \|_F + C d^{\frac{1}{4}} \sqrt{\kappa \|N'  \|_F }.
        \label{equ:ErrorThm2}
\end{equation}
Here $\kappa > 0$ is the condition number of the linear map $\mathcal{A}$ in \eqref{eq:linear} and $C \in \mathbb{R}^+$ is an absolute universal constant.

Furthermore, it is possible to choose  masks $\mathbf{m_0},\mathbf{m_1},\ldots,\mathbf{m_{2\delta - 2}}$ such that $\kappa < 4\delta$ (see \cite{iwen2018phase}),  and it is also possible to construct a single mask $\mathbf{m} \in \mathbbm{C}^d$ such that if $\mathbf{m_k}=W_{k\frac{d}{K}}\mathbf{m},$ then $\kappa = \mathcal{O}(\delta^2)$ for the measurements that appear in \eqref{equ:Ymap} (see \cite{iwen2016fast}).
Also, if $\|N'\|_F$ is sufficiently smalll, the algorithm mentioned above is guaranteed to require just $\mathcal{O} \left( \delta^2 d \log d + \delta^3 d\right)$ total flops to achieve \eqref{equ:ErrorThm2} up to machine precision.
\label{Prop:RecovRes}
\end{thm}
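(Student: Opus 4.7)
The plan is to follow the two-stage lift-and-synchronize pipeline sketched in the paragraph preceding the theorem, and to bound the error contributed by each stage separately.

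Stage one (linear inversion). Since $Y'(\x) = \mathcal{A}(T_\delta(\x\x^*)) + N'$ and $\mathcal{A}$ is a bijection from $T_\delta(\C^{d\times d})$ to its range with condition number $\kappa$, I would apply the pseudoinverse and then Hermitianize to obtain $X_e$ satisfying $\|X_e - T_\delta(\x\x^*)\|_F \leq \kappa \|N'\|_F$. The bulk of the first stage of the work is therefore to (a) verify that the lifted operator $\mathcal{A}$ has a well-controlled condition number for the mask family being used, and (b) argue that restricting the range to banded Hermitian matrices loses no information. The two specific mask families quoted (flat masks giving $\kappa<4\delta$, and the single-mask $W_k \m$ construction giving $\kappa = \mathcal{O}(\delta^2)$) are exactly the two places where one needs separate combinatorial/algebraic arguments, and I expect (b) to be the harder one for the single-mask case because it requires explicit Fourier computations showing that $\{S_\ell W_k \m \m^* W_k^* S_\ell^*\}$ spans $T_\delta(\C^{d\times d})$.

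Stage two (angular synchronization). The banded Hermitian matrix $T_\delta(\x\x^*)$ has diagonal entries $|x_j|^2$ and off-diagonal entries $x_i \overline{x_j}$, so $X_e$ supplies noisy approximations of both. I would (i) estimate the magnitudes by $|x_j| \approx \sqrt{\max((X_e)_{jj},0)}$, incurring an error that scales like $\sqrt{\kappa\|N'\|_F}$ after a square-root perturbation argument (this is the second, $d^{1/4}\sqrt{\kappa\|N'\|_F}$ term in \eqref{equ:ErrorThm2}), and (ii) normalize each off-diagonal entry to extract a noisy relative phase, then run the eigenvector-based angular synchronization method of \cite{singer2011angular,viswanathan2015fast}. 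The normalization step is where the factor $\|\x\|_\infty / \min|\x|^2$ enters: dividing $(X_e)_{ij}$ by $|x_i||x_j|$ amplifies noise inversely in $\min|\x|^2$, and the spectral perturbation bound for the synchronization eigenvector then produces the linear term $(\|\x\|_\infty/\min|\x|^2)(d/\delta)^2 \kappa\|N'\|_F$ in \eqref{equ:ErrorThm2}. The hardest technical step here is the Davis-Kahan-type bound tying the top eigenvector of the perturbed phase matrix to the true phase vector; this is exactly what \cite{viswanathan2015fast} establishes, and I would cite it.

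Mask constructions and complexity. For the $\kappa < 4\delta$ bound, I would use the deterministic near-flat mask family of \cite{iwen2018phase} and verify the condition number bound by an explicit block-circulant diagonalization of $\mathcal{A}^*\mathcal{A}$ via the DFT, which diagonalizes the shift operators $S_\ell$. For the single-mask family $\m_k = W_k \m$, the Gabor/twisted-convolution structure forces $\mathcal{A}^*\mathcal{A}$ to act as a fixed diagonal rescaling on each anti-diagonal of $T_\delta$, reducing the bound to estimating a single sequence of Fourier coefficients of $\m$; a suitable designed $\m$ then gives $\kappa = \mathcal{O}(\delta^2)$. Finally, for the complexity claim, each application of $\mathcal{A}^{-1}$ reduces (after the above diagonalization) to $\mathcal{O}(\delta)$ FFTs of length $d$ plus local solves, yielding $\mathcal{O}(\delta^2 d\log d)$, and the angular synchronization eigenvector can be computed in $\mathcal{O}(\delta^3 d)$ by exploiting bandedness and the power/Lanczos iteration; combining these two gives the stated flop count.
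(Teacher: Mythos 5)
Your proposal is correct and follows essentially the same route as the source: the paper itself gives no proof of this theorem (it is quoted from \cite{iwen2018phase,iwen2016fast}), but the two-stage pipeline you describe --- inverting the lifted banded linear system $\mathcal{A}$ to get a Hermitian estimate of $T_\delta(\x\x^*)$, then recovering magnitudes from the diagonal and phases by eigenvector-based angular synchronization, with the mask-specific condition-number bounds handled by diagonalizing $\mathcal{A}^*\mathcal{A}$ via the DFT --- is exactly the argument of the cited works and the template the present paper reuses in its proofs of Theorems~\ref{thm:Algorithm_2_guarantees} and~\ref{thm:Algorithm_1_guarantees}. The only caveat is that your stage-one bound $\|X_e - T_\delta(\x\x^*)\|_F \le \kappa\|N'\|_F$ implicitly assumes a normalization of $\mathcal{A}$ so that its condition number, rather than $\|\mathcal{A}^{-1}\|$, is the relevant amplification factor, which is how the cited results are stated.
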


Note that \eqref{equ:ErrorThm2} guarantees that the algorithm in \cite{iwen2018phase} referred to by Theorem~\ref{Prop:RecovRes} {\em exactly inverts} (up to a global phase) the measurement map in \eqref{equ:Zmap} for all nonvanishing $\x$ in the noiseless setting (i.e., when  $\| N' \|_F = 0$).  Furthermore, the error between the recovered and original signal degrades gracefully with small amounts of arbitrary additive noise, and when $\delta \ll d$ the algorithm runs in essentially FFT-time.  Indeed, a thorough numerical evaluation of this method has demonstrated it to be significantly more computationally efficient than competing techniques when, e.g., $\delta = \mathcal{O}(\log d)$ (see \cite{iwen2018phase}).  While the the first term of the error bound obtained in Theorem~\ref{Prop:RecovRes} exhibits quadratic dependence in $d,$ we note that the main results of \cite{iwen2018lower} imply, at least heuristically, that  polynomial dependencies on $d$ are actually unavoidable in any upper bound like \eqref{equ:ErrorThm2} 
when 
the masks are locally supported.  As a result, both \eqref{equ:ErrorThm2} as well as the new error bounds developed below generally must exhibit such polynomial dependences on $d$.

\subsection{Main Results}

One of the main drawbacks of Theorem~\ref{Prop:RecovRes} is that it only holds for shifts of size $a = 1,$ i.e., when $L = d$.  In real ptychographic imaging applications, however, the equivalent of our parameter $a$ will in fact often at least $0.4 \delta,$ with $\delta$ being moderately large.  Therefore, we will consider recovery scenarios where both $K$ and $L$ are strictly less than $d$ in \eqref{eqn:YKLdef}, and consider classes of $\x$ and $\m$ for which we can still guarantee noise-robust recovery results.  This motivates our first new result, which allows us to recover bandlimited signals.

\begin{restatable}[Convergence Gaurantees for Algorithm 2]{thm}{Algtwo}
\label{thm:Algorithm_2_guarantees}
 Let ${\bf x},\mathbf{m}\in\mathbb{C}^{d}$ with 
 $\mbox{supp}\left(\widehat{\mathbf{x}}\right)\subseteq\left[\gamma\right]_{0}$ and $\mbox{supp}\left({\bf \mathbf{m}}\right)\subseteq\left[\delta\right]_{0}$, and let 
\begin{equation}\label{eq:mu2}
\mu_2 = \min_{_{\substack{\left|p\right|\leq\gamma-1,\\
\left|q\right|\leq\delta-1
}
}}\left|F_d\left(\widehat{{\bf m}}\circ S_{p}\overline{\widehat{{\bf m}}}\right)_{q} \right|~>~0.
\end{equation} 
Assume that $\gamma\leq 2\delta-1< d,$ 
$L=2\gamma-1,$ $K=2\delta-1$, and also that $K$ and $L$ divide $d$.  Furthermore, suppose that the phaseless  measurements \eqref{eqn:YKLdef} have noise dominated by the norm of $\x$ so that
\begin{equation}
\label{eq:beta}
\|N_{K,L}\|_F \leq \beta \left\Vert \x \right\Vert _{2}^{2}
\end{equation}
for some $\beta\geq0$.
Then Algorithm 2 in Section \ref{sec:recGuar}  outputs an estimate $\x_e$ to $\x$ with relative error
\begin{align}
\min_{\phi\in\left[0,2\pi\right]} \frac{\left\Vert \x -\mathbbm{e}^{\mathbbm{i}\phi} \x_e \right\Vert _{2}}{\left\Vert \x \right\Vert _{2}} \leq\frac{\left(1+2\sqrt{2}\right)\beta}{\sigma_{\gamma}\left(W\right)}\frac{d^{2}}{\sqrt{KL}\mu_2},\label{eq:guarantee_2intro}
\end{align}
where $W \in \C^{2\delta-1 \times \gamma}$ is the partial Fourier matrix with entries $W_{j,k} =\mathbbm{e}^{-\frac{2\pi \mathbbm{i}\left(j-\delta+1\right)k}{d}}$ and $\gamma^{\rm th}$ singular value $\sigma_{\gamma}\left(W\right)$. Furthermore, if $\|N_{K,L}\|_F$ is sufficently small, then Algorithm 2 is always guaranteed to require at most $\mathcal{O}\left(KL\log(KL)+\delta^3+\log(\|\widehat{\x}\|_\infty)\gamma^2\right)$ total flops to achieve \eqref{eq:guarantee_2intro} up to machine precision.
\end{restatable}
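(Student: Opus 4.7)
The plan is to follow the lifting-plus-synchronization paradigm of \cite{iwen2018phase,iwen2016fast} summarized in Theorem~\ref{Prop:RecovRes}, but executed in the Fourier domain so that the bandlimit hypothesis $\mathrm{supp}(\widehat{\x})\subseteq[\gamma]_{0}$ plays the role of the mask-locality hypothesis there. Using Plancherel's identity together with the standard intertwining relations for $W_k$ and $S_\ell$ under $F_d$, each measurement $(Y_{K,L})_{k,\ell}$ can be rewritten (up to a unimodular constant) as a squared inner product of $\widehat{\x}$ against a shift-and-modulate of $\overline{\widehat{\m}}$. Lifting then produces affine measurements of the rank-one matrix $\widehat{\x}\widehat{\x}^{*}$, which under the bandlimit is effectively supported on $[\gamma]_{0}\times[\gamma]_{0}$ and only needs to be recovered in the band $\{(i,j):|i-j|\leq\gamma-1\}$.

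Step one (aliased Wigner distribution deconvolution): Applying a 2D DFT to the $K\times L$ array $Y_{K,L}$ and expanding the resulting exponentials, I would show that the $(p,q)$-entry of the transformed array factors as an aliased sum of products $F_d(\widehat{\m}\circ S_{p}\overline{\widehat{\m}})_{q}\cdot\widehat{x}_{i}\overline{\widehat{x}_{i+p}}\cdot\mathbbm{e}^{-2\pi\mathbbm{i}iq/d}$, plus the 2D DFT of the noise. The choices $K=2\delta-1$, $L=2\gamma-1$, together with the two support hypotheses, are exactly enough to prevent aliasing collisions on the window $|p|\leq\gamma-1$, $|q|\leq\delta-1$. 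On this window the mask factor has modulus at least $\mu_2>0$, so entrywise division by it recovers noisy estimates of $\sum_{i}\widehat{x}_{i}\overline{\widehat{x}_{i+p}}\mathbbm{e}^{-2\pi\mathbbm{i}iq/d}$, i.e., the partial Fourier coefficients of the shifted diagonals of $\widehat{\x}\widehat{\x}^{*}$. Tracking the 2D DFT normalization and the Plancherel factor relating $\|\x\|_2^2$ to $\|\widehat{\x}\|_2^2$ produces the constant $d^{2}/\sqrt{KL}$, while the hypothesis $\|N_{K,L}\|_{F}\leq\beta\|\x\|_{2}^{2}$ propagates by unitarity of the DFT to control the Frobenius perturbation transferred to these estimates.

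Step two (magnitude recovery and angular synchronization): For each fixed $p\in\{-(\gamma-1),\ldots,\gamma-1\}$, the noisy coefficients just obtained, indexed by $|q|\leq\delta-1$, form a linear system in the $\gamma$ unknowns $\widehat{x}_{i}\overline{\widehat{x}_{i+p}}$ whose system matrix is precisely the partial Fourier matrix $W$ of the theorem statement. Solving this system by Moore--Penrose pseudoinverse introduces the $1/\sigma_{\gamma}(W)$ factor appearing in \eqref{eq:guarantee_2intro} and produces an approximate recovery of the band of $\widehat{\x}\widehat{\x}^{*}$ within $\gamma-1$ of the diagonal. Feeding this Hermitian rank-one approximant into the eigenvector-based angular synchronization procedure of \cite{viswanathan2015fast,singer2011angular} yields $\widehat{\x}$ up to a global phase, with perturbation governed by the spectral gap of the underlying rank-one matrix (which is $\|\widehat{\x}\|_2^2=d\|\x\|_2^2$ up to normalization); applying the inverse DFT, which is a multiple of an isometry, then converts the resulting bound to the relative-error estimate \eqref{eq:guarantee_2intro}. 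The flop count follows directly: a 2D FFT at cost $\mathcal{O}(KL\log(KL))$; precomputing the $(2\delta-1)$ mask factors and the pseudoinverse of $W$ at cost $\mathcal{O}(\delta^{3})$; and power iteration on a $\gamma\times\gamma$ Hermitian matrix converging in $\mathcal{O}(\log\|\widehat{\x}\|_{\infty})$ steps of $\mathcal{O}(\gamma^{2})$ work each.

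The main obstacle is the careful bookkeeping that pins down the specific constant $d^{2}/\sqrt{KL}$: the subsampling converts sums over $[d]_{0}$ into sums over $[K]_{0}\times[L]_{0}$, and every Plancherel step, every 2D DFT normalization, and the deconvolution by $F_d(\widehat{\m}\circ S_{p}\overline{\widehat{\m}})_{q}$ contribute multiplicative factors that must be combined correctly. A secondary delicate point is verifying that the angular synchronization stage remains stable without an explicit pointwise lower bound on $|\widehat{x}_{i}|$ (no such bound is assumed in the hypotheses), so that the entire conditioning of the magnitude recovery is absorbed into the single factor $\sigma_{\gamma}(W)$; this should follow from the fact that $W$ already encodes the inversion from band-diagonal Fourier samples to the magnitudes themselves.
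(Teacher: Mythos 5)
Your proposal matches the paper's proof essentially step for step: Lemma~\ref{thm:generalSumCollapse-2} supplies the collapse of the aliasing sums under the two support hypotheses, division by the mask factor bounded below by $\mu_2$ yields the band-diagonal Fourier samples, the resulting system $T=WA$ is inverted via $W^{\dagger}=(W^*W)^{-1}W^*$ contributing the $1/\sigma_{\gamma}(W)$ factor, and the final estimate comes from the leading eigenvector of the reshaped, Hermitianized matrix followed by Fourier inversion. One clarification on your ``secondary delicate point'': no pointwise lower bound on $|\widehat{x}_i|$ is needed because the recovered diagonals $|\alpha|\leq\gamma-1$ constitute the \emph{entire} $\gamma\times\gamma$ matrix $\widehat{\x}|_{[\gamma]_0}\widehat{\x}^*|_{[\gamma]_0}$ rather than a proper band, so the paper invokes a plain rank-one eigenvector perturbation bound (Lemma 8 of \cite{iwen2018phase}, which is where the constant $1+2\sqrt{2}$ comes from) instead of a banded angular synchronization requiring entrywise normalization.
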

%

As mentioned earlier, Theorem~\ref{thm:Algorithm_2_guarantees} allows us to recover $\mathbf{x}$ even when $K$ and $L$ are both strictly less than $d$.  Indeed, the total number of measurements, $KL = 
\mathcal{O}(\gamma\delta)$ is independent of the sample size $d$ (though it does exhibit dependence on the parameters, $\gamma$ and $\delta,$ and it also requires that $\delta > \gamma / 2$).  Nonetheless, the fact that Algorithm 2 exhibits robustness to arbitrary noise indicates that one can use it to quickly obtain a low-pass approximation to a sufficiently smooth $\x \in \C^d$ using fewer than $d$ STFT magnitude measurements.  We also note that Proposition~\ref{prop:mu_condition2}, stated in Section \ref{sec:recGuar}, shows that locally supported masks $\m$ with $\mu_2 > 0$ are relatively simple to construct.

Our second result utilizes the connection between CPD measurements and STFT magnitude measurements (see \eqref{equ:GaborCase} and \eqref{equ:MaskedFouerierMeas}) to provide a new class of deterministic CPD measurement constructions along with an associated noise-robust recovery algorithm.  Unlike previously existing deterministic constructions (see, e.g., Theorem 3.1 in \cite{candes2015phase}) the following result presents a general means of constructing deterministic CDP masks using shifts of a single bandlimited mask $\m$.

\begin{restatable}[Convergence Gaurantees for Algorithm 1]{thm}{Algone}
\label{thm:Algorithm_1_guarantees} 
Let ${\bf x},\mathbf{m}\in\mathbb{C}^{d}$ with  $\mbox{supp}\left({\bf \widehat{{\bf m}}}\right)\subseteq\left[\rho\right]_{0},$ for some $\rho<d/2.$ Let $\min\left|\widehat{{\bf x}}\right|\coloneqq \min_{0\leq n\leq d-1}|\widehat{x}_n|>0,$ and let 
\begin{equation}\label{eq:mu}
\mu_1 \coloneqq \min_{_{\substack{\left|p\right|\leq\gamma-1\\
\left|q\right|\leq\rho-1
}
}}\left|F_d\left(\widehat{{\bf m}}\circ S_{p}\overline{\widehat{{\bf m}}}\right)_{q} \right|~>~0.
\end{equation} 
Fix an integer $\kappa \in [2, \rho]$ and assume that $L=\rho+\kappa-1$ divides $d$.  Then, when $K=d,$ Algorithm 1 in Section \ref{sec:recGuar} will  output $\x_e,$ an estimate of $\x,$ such that
\begin{equation}
\min_{\phi\in\left[0,2\pi\right]}\left\Vert {\bf x}-\mathbbm{e}^{\mathbbm{i}\phi} \x_e \right\Vert _{2}\leq C\frac{d^{7/2}\left\Vert \widehat{{\bf x}}\right\Vert _{\infty}\left\Vert N_{d,L} \right\Vert _{F}}{L^{\frac{1}{2}}\mu_1\kappa^{\frac{5}{2}}\cdot\min\left|\widehat{{\bf x}}\right|^{2}}+C'\frac{d^{\frac{3}{2}}}{L^{\frac{1}{4}}}\sqrt{\frac{\left\Vert N_{d,L} \right\Vert _{F}}{\mu_1}}
\label{equ:NewErrorCDP}
\end{equation}
 for some absolute constants $C,C'\in\mathbb{R}^+$.  
 Furthermore, if $\|N_{d,L}\|_F$ is sufficently small, then  Algorithm 1 is always guaranteed to require just $\mathcal{O} \left( d (\rho + \kappa^2 ) \log d \right)$ total flops to achieve \eqref{equ:NewErrorCDP} up to machine precision.
\end{restatable}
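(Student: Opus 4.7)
The plan is to follow the lift-then-synchronize paradigm of Theorem~\ref{Prop:RecovRes}, but adapted to the Fourier domain and equipped with an aliased Wigner distribution deconvolution that handles the subsampling $a=d/L>1$ in the translation variable. Because it is $\widehat{\m}$ and not $\m$ that is locally supported, the natural object to reconstruct is $\widehat{\x}$. By Parseval,
\[
(Y_{d,L})_{k,\ell}\;=\;\tfrac{1}{d}\bigl|\langle\widehat{\x},\widehat{W_{k}S_{\ell a}\m}\rangle\bigr|^{2}+N_{k,\ell a},
\]
and since the Fourier transform swaps translation with modulation, $\widehat{W_{k}S_{\ell a}\m}$ is a translated-modulated copy of $\widehat{\m}$ (up to a unimodular factor) and is therefore supported on a set of size $\rho$. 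This places us in the locally-supported-mask setting of Section~\ref{sec:intro}, with $\widehat{\x}$ playing the role of $\x$.

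Step~1 of Algorithm~1 lifts to linear measurements of $\widehat{\x}\widehat{\x}^{*}$ and inverts them. Because $K=d$, a full inverse DFT in the modulation index $k$ is available and extracts, at each lag $\tau$, a linear equation whose unknowns are the entries of the $\tau$-th diagonal of $\widehat{\x}\widehat{\x}^{*}$ \emph{aliased modulo $L$}; the system matrix is built from the mask autocorrelation values $F_{d}(\widehat{\m}\circ S_{p}\overline{\widehat{\m}})_{q}$ (whose minimum absolute value is $\mu_{1}$) together with a partial Fourier/Vandermonde factor corresponding to the chosen $L$ translates. The assumption $\mu_{1}>0$ guarantees pointwise invertibility of the deconvolution, and the oversampling $\kappa$, encoded in $L=\rho+\kappa-1$, supplies enough rows to de-alias each short diagonal via a Vandermonde least-squares solve; a lower bound on its smallest singular value will contribute the $\kappa^{5/2}$ factor in the denominator of \eqref{equ:NewErrorCDP}. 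Combining these two bounds yields a Frobenius estimate on the resulting Hermitian matrix $\widehat{X}_{e}$ approximating $T_{\rho}(\widehat{\x}\widehat{\x}^{*})$.

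Step~2 feeds $\widehat{X}_{e}$ into a noise-robust angular synchronization routine such as the spectral method of \cite{viswanathan2015fast}. Since $\min|\widehat{\x}|>0$, the magnitudes of $\widehat{\x}$ are recovered from the diagonal with error linear in $\|\widehat{X}_{e}-T_{\rho}(\widehat{\x}\widehat{\x}^{*})\|_{F}$, whereas the phases are recovered from the off-diagonal entries with error scaling as the square root of the same quantity; together with the $\|\widehat{\x}\|_{\infty}/\min|\widehat{\x}|^{2}$ rescaling that converts a rank-one matrix error into a vector error, this produces exactly the two summands appearing in \eqref{equ:NewErrorCDP}. A final unitary inverse DFT returns from $\widehat{\x}$ to $\x$ while preserving the $\ell^{2}$ error. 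The runtime count then reduces to one $\mathcal{O}(d\log d)$ FFT per translation index, $\mathcal{O}(\rho\kappa^{2})$ work for each of the $\mathcal{O}(\rho)$ small aliased systems, and $\mathcal{O}(d\rho)$ for angular synchronization, totalling $\mathcal{O}(d(\rho+\kappa^{2})\log d)$.

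The main obstacle will be the conditioning analysis in Step~1: unlike the unaliased case $L=d$ handled in Theorem~\ref{Prop:RecovRes}, the deconvolution here does not decouple into one scalar division per diagonal, and one must control the smallest singular value of each aliased $(\rho+\kappa-1)\times\rho$ Vandermonde subsystem jointly with $\mu_{1}$, while avoiding hidden $d$-dependence in the absolute constants. Matching the claimed $L^{-1/2}\kappa^{-5/2}$ improvement and propagating it cleanly through the subsequent perturbation analysis of angular synchronization is the crux of the argument.
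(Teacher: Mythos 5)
Your high-level two-step outline (lift to $\widehat{\x}\widehat{\x}^{*}$ via Wigner distribution deconvolution, then angular synchronization, then inverse DFT) matches the paper, and you correctly identify that the bandlimitedness of $\m$ means one should work with $\widehat{\x}$. However, the core mechanism you propose for handling the subsampling in $\ell$ is not the one the paper uses, and as described it would not go through. You assert that the deconvolution ``does not decouple into one scalar division per diagonal'' and that one must de-alias each diagonal by solving an aliased $(\rho+\kappa-1)\times\rho$ Vandermonde least-squares system whose smallest singular value produces the $\kappa^{5/2}$ factor. This is exactly backwards: the entire point of Lemma~\ref{thm:generalSumCollapse} is that, because $\mathrm{supp}(\widehat{\m})\subseteq[\rho]_0$ forces $\widehat{\m}\circ S_{\alpha-\ell L}\overline{\widehat{\m}}=\mathbf{0}$ unless $|\alpha-\ell L|<\rho$, the choice $L=\rho+\kappa-1$ makes the aliasing sum over $\ell$ collapse to a \emph{single} term whenever $0\leq\alpha\leq\kappa-1$ or $\rho\leq\alpha\leq L-1$. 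Consequently each recoverable diagonal band $\widehat{\x}\circ S_{\alpha}\overline{\widehat{\x}}$, $|\alpha|\leq\kappa-1$, is obtained by a pointwise scalar division by $F_d(\widehat{\m}\circ S_{-\alpha}\overline{\widehat{\m}})$, controlled below by $\mu_1$ alone --- no Vandermonde system appears in Algorithm~1 at all. (The Vandermonde matrix $W$ and its singular value $\sigma_{\gamma}(W)$ belong to Algorithm~2 and Theorem~\ref{thm:Algorithm_2_guarantees}; you appear to have conflated the two algorithms.)

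Because of this misattribution, your accounting for the error bound is also off: the $\kappa^{5/2}$ in the denominator of \eqref{equ:NewErrorCDP} does not come from any de-aliasing subsystem but from the angular synchronization step --- the eigenvector perturbation bound for banded sign matrices of bandwidth $2\kappa-1$ (Corollary~2 of \cite{iwen2018phase}) contributes $d^{5/2}/\kappa^{2}$, and the remaining $\kappa^{1/2}$ arises from normalizing by $\Vert X_{2\kappa-1}\Vert_F\sim\sqrt{\kappa d}\,\Vert\widehat{\x}\Vert^2$-type quantities. Likewise the second summand of \eqref{equ:NewErrorCDP} comes from the diagonal magnitude-estimation step (Lemma~3 of \cite{iwen2016fast}) applied to $\Vert N_{2\kappa-1}\Vert_F$, which is itself bounded by $d^{2}L^{-1/2}\mu_1^{-1}\Vert N_{d,L}\Vert_F$ using unitarity of $d^{-1/2}F_d$. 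You flag the conditioning of the aliased Vandermonde subsystems as ``the crux'' and leave it unresolved; since that object does not exist in the actual argument and you give no route to bound it, the proposal as written has a genuine gap precisely at the step you identify as the main obstacle.
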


When $\kappa = \rho = 2,$ Theorem~\ref{thm:Algorithm_1_guarantees}  guarantees that $3d$ CDP measurements suffice in order to recover any signal $\x$ with a nonvanishing discrete Fourier transform in the noiseless setting as long as $\mu_1 > 0$. Analogously to Proposition \ref{prop:mu_condition2}, Proposition~\ref{prop:mu_condition}, also stated in Section \ref{sec:recGuar}, provides a straightforward way to construct masks with $\mu_1 > 0.$ For general $\kappa$ and $\rho$, Theorem~\ref{thm:Algorithm_1_guarantees} shows that one can reconstruct signals $\x$  using $\mathcal{O}(d \rho)$ CPD measurements based on windows with Fourier support $\rho$ in just $\mathcal{O} \left( \rho^2 d \log d \right)$-time when $\left \Vert N \right\Vert _{F}$ is sufficiently small. We also note that in addition to the theoretical guarantees provided by Theorems \ref{thm:Algorithm_2_guarantees} and \ref{thm:Algorithm_1_guarantees}, Section~\ref{sec:NumEval} demonstrates that both Algorithm 1 and 2 are fast, accurate, and robust to noise in practice as well.

\subsection{Related Work}

The connections between theoretical time-frequency analysis and phaseless  imaging  (e.g., ptychography) have been touched on in the physics community many times over the past several decades.  As noted well over two decades ago in \cite{Rodenburg1992} and later in \cite{Chapman1996},  continuous spectrogram measurements can be written as the convolution of the Wigner distribution functions of the specimen $\mathbf{x}$  and the probe $\mathbf{m}.$  Furthermore, \cite{daSilva:15} has pointed out that this allows one to recover the specimen of interest if enough samples are drawn so that the Heisenberg boxes sufficiently cover the time-frequency plane.  In this work, we use similar ideas formulated in the discrete setting to efficiently invert the types of structured lifted linear maps $\mathcal{A}$ as per \eqref{eq:linear} that appear in \cite{iwen2018phase,iwen2016fast}, and use  angular synchronization approaches to recover the  signal $\x$ up to a global phase. Specifically, we produce two new, efficient algorithms for inverting discrete spectrogram measurements that are provably accurate and robust to arbitrary additive measurement errors.

In \cite{bendory2017non}, Bendory and Eldar prove results similar to some of those 
summarized in Theorem~\ref{Prop:RecovRes} in the case there $a=1$ and  $\|N'\|_F=0,$ 
and they also demonstrate numerically that their algorithms are robust to noise.  In this paper, we prove  noise-robust recovery results, where we allow $a>1.$ However, we make additional assumptions about either the support of $\widehat{\m}$ or  the supports of $\m$ and $\widehat{\x}.$   We also note the very recent and excellent work of Rayan Saab and Brian Preskitt \cite{preskitt2018phase} as well as that of Melnyk, Filbir, and Krahmer \cite{Melnyk:19} which both prove results similar to Theorem \ref{thm:Algorithm_2_guarantees}. As in Theorem \ref{thm:Algorithm_2_guarantees}, the results of \cite{Melnyk:19,preskitt2018phase} can guarantee recovery with shift sizes $a>1$. 
Their results primarily differ from Theorem \ref{thm:Algorithm_2_guarantees} in that they don't used Wigner Distribution Deconvolution (WDD) based methods. As a result, they  consider different classes of masks and signals than we do. 

 Other related work includes that of Salanevich and Pfander \cite{pfander2019robust,salanevich2015polarization} which builds upon the work of Alexeev et al. \cite{alexeev2014phase} to establish noise robust recovery results for Gabor frame-based measurements.  Their noise robust approach  has similar characteristics to the approach taken here with the primary differences being that they require additional measurements beyond those provided by shifts and modulations of a single mask (see, e.g., equation (8) in \cite{pfander2019robust}), and in some sense utilize the reverse of the approach taken here:  Instead of first solving a linear system to obtain an approximation of (a portion of) $\x \x^*$, and then using angular synchronization to obtain an approximation to $\x$, the methods of \cite{pfander2019robust,salanevich2015polarization} instead first use angular synchronization methods to obtain frame coefficients of $\x$, and then reconstruct $\x$ using the recovered frame coefficients.



The rest of the paper is organized as follows.  In Section \ref{Sec:BasicProps}, we establish necessary notation and state a number of preliminary lemmas.  Then, in Section~\ref{sec:WDD}, we establish  several discrete and aliased variants of WDD, some of which can be used 
when the mask $\m$ is locally supported in physical space, and others 
for when  $\m$ is locally supported in Fourier space.  In Section~\ref{sec:recGuar}, we prove Theorems \ref{thm:Algorithm_2_guarantees} and \ref{thm:Algorithm_1_guarantees} which provide recovery guarantees for our proposed methods 
and also state propositions which describe ways to design masks so that the assumptions  of these theorems are valid.  Finally, in Section~\ref{sec:NumEval}, we evaluate our  algorithms numerically and show that they are fast and  robust to additive measurement noise.

\section{Notation and Preliminary Results}\label{Sec:BasicProps}

For ${\bf x}\coloneqq(x_0,\ldots,x_{d-1})^T \in\mathbb{C}^{d},$
 we let
\[
\mbox{supp}\left({\bf x}\right)\coloneqq\left\{ n\in[d]_0: x_{n}\neq0\right\} 
\]
denote the support of $\mathbf{x},$ where, as in Section \ref{sec:intro}, $[d]_0=\{0,1,\ldots,d-1\}.$ 
 We let $R\mathbf{x}\coloneqq\widetilde{{\bf x}}$
denote the reversal of ${\bf x}$ about its first entry, i.e.,
\[
(R\mathbf{x})_n=\widetilde{x}_{n}\coloneqq x_{-n\!\!\!\!\mod\,d}\quad\text{for }0\leq n\leq d-1, 
\]
and we recall from \eqref{eq:defshift} and \eqref{eq:defmod}  the circular shift and modulation operators given by
$\left(S_{\ell}{\bf x}\right)_{n}= x_{\left(\ell+n\right)\!\!\!\!\mod\,d}$
and $\left(W_{k}{\bf x}\right)_{n}= x_{n}\mathbbm{e}^{\frac{2\pi \mathbbm{i}kn}{d}}.$
In order to avoid cumbersome notation, if $n$ is  not an element of 
$[d]_0,$
we will  write $x_n$ in place of $x_{n \!\!\!\mod d}.$ 
 For $\mathbf{x}\in\mathbb{C}^d$, we define the Fourier transform of $\mathbf{x}$ by  
\[
\widehat{x}_{k}\coloneqq \left(F_{d}{\bf x}\right)_{k}=\sum_{n=0}^{d-1}x_{n}\mathbbm{e}^{-\frac{2\pi \mathbbm{i}nk}{d}},
\]
where as in \eqref{eq:defF}, $F_{d}\in\mathbb{C}^{d\times d}$ denotes the $d\times d$ discrete
Fourier transform  matrix with entries 
$
\left(F_{d}\right)_{j,k}=\mathbbm{e}^{-\frac{2\pi \mathbbm{i}j k}{d}}$ for $0\leq j,k\leq d-1.
$
For  ${\bf x},{\bf y}\in\mathbb{C}^{d}$ and
$\ell\in\left[d\right]_{0}$, we define circular convolution and Hadamard (pointwise) multiplication by
\begin{align*}
\left({\bf x}\ast_{d}{\bf y}\right)_{\ell} & \coloneqq \sum_{n=0}^{d-1}x_{n}y_{\ell-n},\:\:\: \text{ and} \quad
\left({\bf x}\circ{\bf y}\right)_{\ell} \coloneqq x_{\ell}y_{\ell},
\end{align*}and we define their componentwise quotient $\frac{\mathbf{x}}{\mathbf{y}}$ and componentwise absolute value $|\mathbf{x}|$ by 
\begin{equation*}
\left(\frac{\mathbf{x}}{\mathbf{y}}\right)_n=\frac{x_n}{y_n}\quad\text{and}\quad
|\mathbf{x}|_n=|x_n|.
\end{equation*}

For a matrix $M,$ we let $M_k$ denote its $k$-th column, and 
let 
$\|M\|_F$ 
 denote its Frobenius norm. 
 When proving the convergence of our algorithms, we will use the fact that, up to a reorganization of the terms, a banded $d\times d$ matrix, whose nonzero entries are contained within $\kappa$ entries of the main diagonal is equivalent to a $(2\kappa-1)\times d$ matrix whose columns are the diagonal bands of the square, banded matrix. Towards this end, if $2\kappa-1\leq d$ and  $M=(M_{1-\kappa},\ldots,M_0,\ldots M_{\kappa-1})$ is a $(2\kappa-1)\times d$ matrix with columns indexed from $1-\kappa$ to $\kappa-1$ so that column zero is the middle column, we let  $C_{2\kappa-1}(M)$ be the banded $d\times d$ matrix with entries given  by 
\begin{equation}
\left(C_{2\kappa-1}\left(M\right)\right)_{j,k}=\begin{cases}M_{j,k-j}\ &\text{if } |j-k|< \kappa\text{ or }|j-k|>d-\kappa\\
0 	&\text{otherwise}
\end{cases}
\label{eq:circulant_operator}
\end{equation}
for $j,k\in[d]_0.$ 
By construction, the columns of $M$ are the diagonal bands of $C_{2\kappa-1}(M)$  with the middle column $M_0$ lying on the main diagonal. For example, in the case where $\kappa=2,$ \[C_3\left(\left[\begin{array}{ccc}
a_{0,-1} & a_{0,0} & a_{0,1}\\
a_{1,-1} & a_{1,0} & a_{1,1}\\
\vdots & \vdots & \vdots\\
a_{d-2,-1} & a_{d-2,0} & a_{d-2,1}\\
a_{d-1,-1} & a_{d-1,0} & a_{d-1,1}
\end{array}\right]\right)=\left[\begin{array}{ccccc}
a_{0,0} & a_{0,1} & \cdots & 0 & a_{0,-1}\\
a_{1,-1} & a_{1,0} & a_{1,1} &  & 0\\
\vdots & \ddots & \ddots & \ddots & \vdots\\
0 &  & a_{d-2,-1} & a_{d-2,0} & a_{d-2,1}\\
a_{d-1,1} & 0 & \cdots & a_{d-1,-1} & a_{d-1,0}
\end{array}\right].
\]
Below, we will state a number of lemmas, some of which are well known, which we will use in the proofs of our main results. Proofs are provided in the appendix. 
Our first lemma  summarizes a number of properties of the discrete Fourier transform and the operators above. 
\begin{lem}
\label{lem:DFTproperties} For all ${\bf x}\in\mathbb{C}^{d}$
and $\ell\in\left[d\right]_{0}$,
\begin{enumerate}
\item $F_{d}\widehat{{\bf x}}=d\widetilde{{\bf x}},$ \label{FF}
\item $F_{d}\left(W_{\ell}{\bf x}\right)=S_{-\ell}\widehat{{\bf x}},$ \label{FW}
\item $F_{d}\left(S_{\ell}{\bf x}\right)=W_{\ell}\widehat{{\bf x}},$ \label{FS}
\item $W_{-\ell}F_{d}\left(S_{\ell}\overline{\widetilde{{\bf x}}}\right)=\overline{\widehat{{\bf x}}},$ \label{WFS}
\item $\overline{\widetilde{S_{\ell}{\bf x}}}=S_{-\ell}\overline{\widetilde{{\bf x}}},$ \label{BarSTilde}
\item $F_{d}\overline{{\bf x}}=\overline{F_{d}\widetilde{{\bf x}}},$ \label{FBar}
\item $\widetilde{\widehat{{\bf x}}}=\widehat{\widetilde{{\bf x}}},$ \label{TildeHat}
\item $\left|F_{d}{\bf x}\right|^{2}=F_{d}\left({\bf x}\ast_{d}\overline{\widetilde{{\bf x}}}\right).$ \label{lem:absoluteFourierSquared}
\end{enumerate}
\end{lem}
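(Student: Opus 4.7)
The plan is to verify each of the eight identities by direct entrywise computation from the definitions of $F_d$, $S_\ell$, $W_k$, together with the standard orthogonality relation
\[
\sum_{k=0}^{d-1} \mathbbm{e}^{\frac{2\pi \mathbbm{i} (n+j)k}{d}} = d \cdot \mathbbm{1}[n+j \equiv 0 \!\!\! \mod d].
\]
All indices should be interpreted modulo $d$ as per the convention stated just before the lemma. Each identity is a one- or two-line computation, so I would simply present them in a logical order that lets later identities reuse earlier ones rather than redo index manipulations.

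I would start with (\ref{FF}) because it is the only one requiring the orthogonality sum: expanding $(F_d \widehat{\x})_n = \sum_k \sum_j x_j \mathbbm{e}^{-2\pi\mathbbm{i}(n+j)k/d}$ and swapping order of summation gives $d \, x_{-n \bmod d} = d \widetilde{x}_n$. Next I would handle (\ref{FW}) and (\ref{FS}) together: they follow from the definitions by absorbing the modulation or shift into the kernel of $F_d$ and relabeling the summation index. Identities (\ref{BarSTilde}) and (\ref{TildeHat}) are pure index-bookkeeping: $\overline{\widetilde{(S_\ell\x)}}_n = \overline{x_{\ell - n}} = \overline{\widetilde{x}_{n-\ell}} = (S_{-\ell}\overline{\widetilde{\x}})_n$, and $\widetilde{\widehat{x}}_n = \widehat{x}_{-n} = \sum_j x_j \mathbbm{e}^{2\pi\mathbbm{i}jn/d} = \sum_j \widetilde{x}_j \mathbbm{e}^{-2\pi\mathbbm{i}jn/d} = \widehat{\widetilde{x}}_n$ after the substitution $j \mapsto -j$.

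With those in hand I would prove (\ref{FBar}) by conjugating the DFT sum: $\overline{\widehat{x}_k} = \sum_j \overline{x_j} \mathbbm{e}^{2\pi\mathbbm{i}jk/d} = \sum_j \overline{x_{-j}} \mathbbm{e}^{-2\pi\mathbbm{i}jk/d} = (F_d\overline{\widetilde{\x}})_k$, and then relabel. Identity (\ref{WFS}) then drops out by combining (\ref{FS}) applied to $\overline{\widetilde{\x}}$ with (\ref{FBar}) applied to $\widetilde{\x}$, since $\widetilde{\widetilde{\x}} = \x$ gives $F_d \overline{\widetilde{\x}} = \overline{F_d \x} = \overline{\widehat{\x}}$, whence $W_{-\ell} F_d(S_\ell \overline{\widetilde{\x}}) = W_{-\ell} W_\ell \overline{\widehat{\x}} = \overline{\widehat{\x}}$. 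Finally, (\ref{lem:absoluteFourierSquared}) is the standard convolution-theorem identity: expand $({\bf x} \ast_d \overline{\widetilde{\x}})_\ell$, apply $F_d$, interchange sums, and use (\ref{FBar}) to recognize the result as $\widehat{\x} \circ \overline{\widehat{\x}} = |\widehat{\x}|^2$.

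There is no real obstacle here; the only thing to be careful about is that the reversal $R$ is about the first entry (so $\widetilde{x}_n = x_{-n \bmod d}$, not $x_{d-1-n}$), and that the conjugate reversal appears in the ``correct'' position in (\ref{FBar}) and (\ref{lem:absoluteFourierSquared}). I would keep the modular indexing implicit (per the convention) rather than writing $\mbox{mod}\,d$ in every expression, which keeps the computations short. The whole proof is essentially a half page of index manipulation, and as the authors indicate, it belongs in an appendix rather than the main text.
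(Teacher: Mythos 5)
Your proposal is correct and follows essentially the same route as the paper's appendix proof: direct entrywise computation from the definitions, with the orthogonality sum for part (\ref{FF}) and index relabeling for the rest. The only cosmetic difference is that you derive (\ref{WFS}) by composing (\ref{FS}) and (\ref{FBar}) (using $\widetilde{\widetilde{\x}}=\x$) rather than finishing the sum by hand, and you invoke (\ref{FBar}) instead of (\ref{WFS}) with $\ell=0$ in part (\ref{lem:absoluteFourierSquared}); both are equivalent one-line substitutions.
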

%
%
%
The following lemma is the discrete analogue of the convolution theorem.
\begin{lem}
\label{lem:convolutionTheorem} (Convolution Theorem) For all ${\bf x},{\bf y}\in\mathbb{C}^{d},$

\[
F_{d}^{-1}\left(\widehat{{\bf x}}\circ\widehat{{\bf y}}\right)={\bf x}\ast_{d}{\bf y},
\]
and
\[
\left(F_{d}{\bf x}\right)\ast_{d}\left(F_{d}{\bf y}\right)=d F_{d}\left({\bf x}\circ{\bf y}\right).
\]
\end{lem}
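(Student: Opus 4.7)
The plan is to establish the first identity by a direct index computation, and then deduce the second identity from the first by pre-composing with the DFT and invoking part (1) of Lemma \ref{lem:DFTproperties} (the ``$F_d^2 = d\cdot R$'' relation). Neither part should present any real obstacle; the only care needed is the bookkeeping of modular indices.

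For the first identity, I would equivalently show $\widehat{\mathbf{x} \ast_d \mathbf{y}} = \widehat{\mathbf{x}} \circ \widehat{\mathbf{y}}$ and then apply $F_d^{-1}$ to both sides. Expanding $(\widehat{\mathbf{x} \ast_d \mathbf{y}})_k$ as a double sum
\[
\sum_{\ell=0}^{d-1} \sum_{n=0}^{d-1} x_n\, y_{\ell - n}\, \mathbbm{e}^{-\frac{2\pi \mathbbm{i}\, \ell\, k}{d}},
\]
I would perform the substitution $m = \ell - n \pmod d$, which for each fixed $n$ is a bijection of $[d]_{0}$ onto itself. The exponent then factors as $\mathbbm{e}^{-\frac{2\pi\mathbbm{i}(m+n)k}{d}}$, and the double sum decouples into $\widehat{x}_k\, \widehat{y}_k$. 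Applying $F_d^{-1}$ yields the first identity.

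For the second identity, I would apply the first identity to the pair $F_d \mathbf{x},\, F_d \mathbf{y}$, obtaining
\[
(F_d \mathbf{x}) \ast_d (F_d \mathbf{y}) = F_d^{-1}\bigl( \widehat{F_d \mathbf{x}} \circ \widehat{F_d \mathbf{y}}\bigr).
\]
By part (1) of Lemma \ref{lem:DFTproperties}, each factor $\widehat{F_d \mathbf{z}}$ equals $d\, \widetilde{\mathbf{z}}$, so the Hadamard product inside collapses to $d^{2}\, \widetilde{\mathbf{x}} \circ \widetilde{\mathbf{y}} = d^{2}\, \widetilde{\mathbf{x} \circ \mathbf{y}}$, where the last equality uses that coordinate reversal commutes with pointwise multiplication. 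It remains to note the auxiliary identity $F_d^{-1} \widetilde{\mathbf{z}} = \tfrac{1}{d} F_d \mathbf{z}$, which follows from a single reindexing $n \mapsto -n \pmod d$ in the definition of $F_d^{-1}$. Combining these reduces the right-hand side to $d\, F_d(\mathbf{x}\circ\mathbf{y})$, as required. The only mild subtlety in the whole argument is keeping the sign conventions of the reversal and reindexing steps consistent; beyond that, the proof is pure bookkeeping.
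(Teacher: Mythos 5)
Your proposal is correct and follows essentially the same route as the paper's proof: the first identity is established by the same direct reindexing of the double sum defining $F_d(\mathbf{x}\ast_d\mathbf{y})$, and the second is deduced from the first applied to $F_d\mathbf{x}, F_d\mathbf{y}$ together with Lemma \ref{lem:DFTproperties}, part \ref{FF}, and the fact that reversal commutes with Hadamard multiplication. The only cosmetic difference is that the paper verifies the second identity after applying $F_d$ to both sides, whereas you apply $F_d^{-1}$ at the end via the (correct) auxiliary relation $F_d^{-1}\widetilde{\mathbf{z}}=\tfrac{1}{d}F_d\mathbf{z}$.
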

In much of our analysis, we will have to consider the Hadamard product  of a vector with a shifted copy of itself. The next three lemmas will be useful when we need to manipulate terms of that form.
\begin{lem}
\label{lem:fourierIndexSwap} Let ${\bf x}\in\mathbb{C}^{d},$
and let $\alpha,\omega\in\left[d\right]_{0}$. Then,
\[
\left(F_{d}\left({\bf x}\circ S_{\omega}{\bf \overline{x}}\right)\right)_{\alpha}=\frac{1}{d}\mathbbm{e}^{\frac{2\pi \mathbbm{i}\omega\alpha}{d}}\left(F_{d}\left(\widehat{{\bf x}}\circ S_{-\alpha}{\bf \overline{\widehat{{\bf x}}}}\right)\right)_{\omega}.
\]
\end{lem}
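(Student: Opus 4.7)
The plan is to verify the identity by a direct computation: expand the DFT on the left via the inverse DFT representation of $\mathbf{x}$, use orthogonality of characters to collapse a double sum to a single sum in $\widehat{\mathbf{x}}$, and then recognize the result as exactly what appears on the right (modulo a reindexing that accounts for the $\mathbbm{e}^{2\pi\mathbbm{i}\omega\alpha/d}$ twist).

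Concretely, I would begin by writing
\[
\bigl(F_{d}(\mathbf{x}\circ S_{\omega}\overline{\mathbf{x}})\bigr)_{\alpha} \;=\; \sum_{n=0}^{d-1} x_{n}\,\overline{x_{n+\omega}}\,\mathbbm{e}^{-2\pi\mathbbm{i} n\alpha/d},
\]
substitute $x_{n}=\frac{1}{d}\sum_{k}\widehat{x}_{k}\mathbbm{e}^{2\pi\mathbbm{i} nk/d}$ (and the conjugate version for $\overline{x_{n+\omega}}$), and interchange the order of summation. The resulting inner sum in $n$ is $\sum_{n=0}^{d-1}\mathbbm{e}^{2\pi\mathbbm{i} n(k-j-\alpha)/d}$, which by orthogonality is $d$ if $k\equiv j+\alpha\pmod{d}$ and vanishes otherwise. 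This collapses the expression to
\[
\frac{1}{d}\sum_{j=0}^{d-1}\widehat{x}_{j+\alpha}\,\overline{\widehat{x}_{j}}\,\mathbbm{e}^{-2\pi\mathbbm{i}\omega j/d}.
\]

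To finish, I would expand the right-hand side directly from the definition,
\[
\bigl(F_{d}(\widehat{\mathbf{x}}\circ S_{-\alpha}\overline{\widehat{\mathbf{x}}})\bigr)_{\omega} \;=\; \sum_{n=0}^{d-1}\widehat{x}_{n}\,\overline{\widehat{x}_{n-\alpha}}\,\mathbbm{e}^{-2\pi\mathbbm{i} n\omega/d},
\]
and reindex via $m=n-\alpha$. The only effect of the shift is to factor out $\mathbbm{e}^{-2\pi\mathbbm{i}\alpha\omega/d}$, which precisely cancels the prefactor $\frac{1}{d}\mathbbm{e}^{2\pi\mathbbm{i}\omega\alpha/d}$ on the stated right-hand side, producing the same reduced expression obtained from the LHS.

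The argument is essentially bookkeeping, so there is no real obstacle. The only point that requires a moment's care is keeping index shifts consistent modulo $d$ throughout the reindexing — but since every summation runs over a complete period of length $d$ and all vectors are treated as $d$-periodic sequences (as noted in the notation section), no boundary terms are lost, and the two sides agree term by term.
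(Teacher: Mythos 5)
Your computation is correct: expanding the left-hand side via Fourier inversion, collapsing the double sum by orthogonality of characters to $\frac{1}{d}\sum_{j}\widehat{x}_{j+\alpha}\overline{\widehat{x}_{j}}\,\mathbbm{e}^{-2\pi\mathbbm{i}\omega j/d}$, and then reindexing the right-hand side by $m=n-\alpha$ to extract the phase $\mathbbm{e}^{-2\pi\mathbbm{i}\alpha\omega/d}$ all check out, and the periodicity remark disposes of any boundary issues. The paper reaches the same identity by a more modular route: it first applies the convolution theorem (Lemma \ref{lem:convolutionTheorem}) to write $F_{d}(\mathbf{x}\circ S_{\omega}\overline{\mathbf{x}})=\frac{1}{d}\,\widehat{\mathbf{x}}\ast_{d}F_{d}(S_{\omega}\overline{\mathbf{x}})$, then uses the shift--modulation and conjugation--reversal identities of Lemma \ref{lem:DFTproperties} (parts \ref{FS}, \ref{FBar}, \ref{TildeHat}) to rewrite $F_{d}(S_{\omega}\overline{\mathbf{x}})$ and pull out the phase factor. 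The two arguments are the same orthogonality computation at bottom -- your version simply inlines the proofs of those lemmas -- so yours is more self-contained, while the paper's keeps the bookkeeping confined to the already-established toolbox and is somewhat shorter on the page. Either is acceptable; no gap.
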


\begin{lem}\label{lem:tildekill} Let $\mathbf{x}\in\mathbb{C}^{d},$ and let $\alpha\in\mathbb{Z}.$ Then,
\begin{equation*}
F_d\left(\widetilde{\mathbf{x}}\circ S_{-\alpha}\overline{\widetilde{\mathbf{x}}}\right)=R(F_d(\mathbf{x}\circ S_\alpha \overline{\mathbf{x}})).
\end{equation*}
\end{lem}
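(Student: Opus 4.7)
The plan is to prove the identity by showing that the reversal operator $R$ interacts nicely with the DFT, with Hadamard products, with conjugation, and with circular shifts, and then to collapse the two sides into one another using these rules. The key algebraic facts I will need are: $R$ is linear and commutes with complex conjugation; $R$ distributes over Hadamard products since $(R(\mathbf{u}\circ \mathbf{v}))_n = u_{-n}v_{-n} = (R\mathbf{u}\circ R\mathbf{v})_n$; and $R$ intertwines shifts as $R\,S_\alpha = S_{-\alpha}\,R$, because $(R S_\alpha \mathbf{x})_n = x_{\alpha-n} = (R\mathbf{x})_{n-\alpha} = (S_{-\alpha} R\mathbf{x})_n$. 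Finally, property \eqref{TildeHat} of Lemma~\ref{lem:DFTproperties} gives $R\,F_d = F_d\,R$.

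First I would start from the right-hand side and pull $R$ through the DFT using $\widetilde{\widehat{\mathbf{y}}} = \widehat{\widetilde{\mathbf{y}}}$ (property \eqref{TildeHat} of Lemma~\ref{lem:DFTproperties}) applied to $\mathbf{y} = \mathbf{x}\circ S_\alpha \overline{\mathbf{x}}$, yielding
\[
R\bigl(F_d(\mathbf{x}\circ S_\alpha \overline{\mathbf{x}})\bigr) = F_d\bigl(R(\mathbf{x}\circ S_\alpha \overline{\mathbf{x}})\bigr).
\]
Next I would simplify the argument inside $F_d$ by distributing $R$ across the Hadamard product, commuting $R$ past conjugation, and using the intertwining identity $RS_\alpha = S_{-\alpha}R$:
\[
R(\mathbf{x}\circ S_\alpha \overline{\mathbf{x}}) = R\mathbf{x}\circ R(S_\alpha \overline{\mathbf{x}}) = \widetilde{\mathbf{x}}\circ S_{-\alpha}(R\overline{\mathbf{x}}) = \widetilde{\mathbf{x}}\circ S_{-\alpha}\overline{\widetilde{\mathbf{x}}}.
\]
Combining these two displays yields precisely the left-hand side of the claimed identity.

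There is no real obstacle here; the whole argument is a bookkeeping exercise that hinges on the three commutation rules above together with property \eqref{TildeHat} of Lemma~\ref{lem:DFTproperties}. The only subtle point is the sign flip $R S_\alpha = S_{-\alpha} R$, which I would verify (or invoke as an auxiliary remark) before using it in the main computation. If desired, one can instead give a one-line direct verification by writing out the $k$th Fourier coefficient of each side, substituting $m = -n \bmod d$, and observing that both sides equal $\sum_m x_m\,\overline{x_{m+\alpha}}\,e^{2\pi i m k/d}$; I would mention this as a sanity check but prefer the operator-level proof for clarity and brevity.
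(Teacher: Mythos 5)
Your proof is correct and follows essentially the same route as the paper's: the paper also reduces the identity to the facts that $R$ distributes over Hadamard products, commutes with conjugation and with $F_d$ (part \ref{TildeHat} of Lemma \ref{lem:DFTproperties}), and intertwines shifts (encoded there via part \ref{BarSTilde}), merely running the chain of equalities from the left-hand side to the right rather than in your direction.
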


\begin{lem}
\label{lem:indexSwap} Let ${\bf x},{\bf y}\in\mathbb{C}^{d},$
and let $\ell,k\in\left[d\right]_{0}$. Then,
\[
\left(\left({\bf x}\circ S_{-\ell}{\bf y}\right)\ast_{d}\left(\overline{\widetilde{{\bf x}}}\circ S_{\ell}\overline{\widetilde{{\bf y}}}\right)\right)_{k}=\left(\left({\bf x}\circ S_{-k}{\bf \overline{x}}\right)\ast_{d}\left(\widetilde{{\bf y}}\circ S_{k}\overline{\widetilde{{\bf y}}}\right)\right)_{\ell}.
\]
\end{lem}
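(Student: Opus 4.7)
My plan is to prove Lemma \ref{lem:indexSwap} by a direct expansion of both sides and showing they reduce to the same quadruple product summed over a single index. This is fundamentally a bookkeeping exercise with modular index arithmetic, and the main obstacle will simply be keeping the shifts, reversals, and conjugations straight so that the substitutions are transparent.

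First I would record the identity $\overline{\widetilde{{\bf z}}}_m = \overline{z}_{-m \bmod d}$ for any ${\bf z}\in\mathbb{C}^d$, which follows immediately from the definition of $R$, so that $(\overline{\widetilde{{\bf x}}}\circ S_{\ell}\overline{\widetilde{{\bf y}}})_m = \overline{x}_{-m}\,\overline{y}_{-m-\ell}$. Then I expand the left-hand side using the definition of circular convolution at index $k$:
\begin{align*}
\text{LHS} &= \sum_{n=0}^{d-1}\bigl({\bf x}\circ S_{-\ell}{\bf y}\bigr)_{n}\bigl(\overline{\widetilde{{\bf x}}}\circ S_{\ell}\overline{\widetilde{{\bf y}}}\bigr)_{k-n} \\
&= \sum_{n=0}^{d-1} x_{n}\,y_{n-\ell}\,\overline{x}_{n-k}\,\overline{y}_{n-k-\ell},
\end{align*}
where all indices are understood modulo $d$.

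Next I carry out the analogous expansion on the right-hand side. Here the roles of $k$ and $\ell$, and of ${\bf x}$ and ${\bf y}$, are interchanged, and ${\bf y}$ appears already reversed, so the expansion gives
\begin{align*}
\text{RHS} &= \sum_{n=0}^{d-1}\bigl({\bf x}\circ S_{-k}\overline{{\bf x}}\bigr)_{n}\bigl(\widetilde{{\bf y}}\circ S_{k}\overline{\widetilde{{\bf y}}}\bigr)_{\ell-n} \\
&= \sum_{n=0}^{d-1} x_{n}\,\overline{x}_{n-k}\,y_{n-\ell}\,\overline{y}_{n-\ell-k},
\end{align*}
after applying $\widetilde{y}_{\ell-n}=y_{n-\ell}$ and $\overline{\widetilde{y}}_{\ell-n+k}=\overline{y}_{n-\ell-k}$. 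Since the two resulting sums are identical (reorder the four factors), the equality follows.

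The main obstacle, as noted, is purely notational: tracking how $S_{\pm\ell}$, $R$, and complex conjugation interact when composed, and in particular verifying the identity $\overline{\widetilde{{\bf z}}}_m = \overline{z}_{-m}$ (which one can also derive as a corollary of item \ref{BarSTilde} of Lemma \ref{lem:DFTproperties} applied with $\ell = 0$). Once that identity is in hand, no clever rearrangement is needed; both sides parameterize the same sum over a common dummy variable $n$, and the ostensibly asymmetric appearance of $R$ on $\overline{\widetilde{{\bf x}}},\overline{\widetilde{{\bf y}}}$ on the left versus $\overline{{\bf x}},\widetilde{{\bf y}},\overline{\widetilde{{\bf y}}}$ on the right is absorbed by the convolution-index reflection $n\mapsto k-n$ (resp.\ $n\mapsto \ell-n$).
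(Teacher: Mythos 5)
Your proof is correct and follows essentially the same route as the paper's: expand both convolutions by definition and match the resulting quadruple products termwise, with the only cosmetic difference being that you unwind all tildes to negated indices on both sides while the paper converts the left-hand sum directly into the tilde form appearing on the right. (One trivial aside: item \ref{BarSTilde} of Lemma \ref{lem:DFTproperties} with $\ell=0$ is a tautology and does not yield $\overline{\widetilde{z}}_m=\overline{z}_{-m}$, but that identity is immediate from the definition of $R$ anyway, as you first note.)
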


For a positive integer $s$ which divides $d$, we introduce the subsampling
operator
\[
Z_{s}:\mathbb{C}^{d}\to\mathbb{C}^{\frac{d}{s}},
\]
defined by
\[
\left(Z_{s}{\bf x}\right)_{n}\coloneqq x_{n s}\ \text{for } n\in\left[\frac{d}{s}\right]_{0}.
\]
The following lemma shows that taking the Fourier transform of a subsampled vector produses an aliasing effect.
\begin{lem}
\label{lem:aliasing} (Aliasing) Let $s$ be a positive integer which divides $d.$ Then for  ${\bf x}\in\mathbb{C}^{d}$
and $\omega\in\left[\frac{d}{s}\right]_{0}$, 
\[
\left(F_{\frac{d}{s}}\left(Z_{s}{\bf x}\right)\right)_{\omega}=\frac{1}{s}\sum_{r=0}^{s-1}\widehat{x}_{\omega-r\frac{d}{s}}.
\]
\end{lem}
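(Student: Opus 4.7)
The plan is a direct computation starting from the right-hand side. I would first expand each term $\widehat{x}_{\omega - r d/s}$ using the definition of the DFT, being careful that the paper's convention $x_n = x_{n \bmod d}$ means $\widehat{x}_j$ is well-defined for any integer $j$ via $\widehat{x}_j = \sum_{m=0}^{d-1} x_m \mathbbm{e}^{-2\pi\mathbbm{i} m j / d}$. Substituting gives
\[
\sum_{r=0}^{s-1} \widehat{x}_{\omega - r d/s} = \sum_{r=0}^{s-1} \sum_{m=0}^{d-1} x_m \mathbbm{e}^{-\frac{2\pi\mathbbm{i} m \omega}{d}} \mathbbm{e}^{\frac{2\pi\mathbbm{i} m r}{s}}.
\]

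Next I would swap the order of summation and factor out the terms that do not depend on $r$, leaving the inner geometric sum $\sum_{r=0}^{s-1} \mathbbm{e}^{2\pi\mathbbm{i} m r/s}$. By the orthogonality of the characters of $\mathbb{Z}/s\mathbb{Z}$, this sum equals $s$ when $s \mid m$ and $0$ otherwise. Hence only the indices $m = n s$ with $n \in [d/s]_0$ contribute, and the expression collapses to
\[
s \sum_{n=0}^{d/s - 1} x_{n s}\, \mathbbm{e}^{-\frac{2\pi\mathbbm{i} n s \omega}{d}} = s \sum_{n=0}^{d/s - 1} (Z_s \mathbf{x})_n\, \mathbbm{e}^{-\frac{2\pi\mathbbm{i} n \omega}{d/s}} = s \bigl(F_{d/s}(Z_s \mathbf{x})\bigr)_\omega,
\]
where the last equality is just the definition of the $(d/s)$-point DFT applied to $Z_s \mathbf{x}$. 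Dividing both sides by $s$ yields the claimed identity.

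There is essentially no main obstacle: the proof is a two- or three-line manipulation. The only point requiring a small amount of care is the bookkeeping of indices — in particular, verifying that $\widehat{x}_{\omega - r d/s}$ is well-defined for all the (possibly negative) indices that appear and recognizing the resulting subsampled-and-reindexed sum as a genuine $(d/s)$-point DFT. Neither of these is substantive given the paper's standing conventions on periodic extension and the definition of $Z_s$.
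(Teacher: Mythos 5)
Your proof is correct and is essentially the paper's computation run in reverse: the paper starts from $\left(F_{d/s}(Z_s\mathbf{x})\right)_\omega$, inserts the Fourier inversion formula for $x_{ns}$, and applies orthogonality of the characters of $\mathbb{Z}/(d/s)\mathbb{Z}$, whereas you start from the right-hand side, insert the forward DFT definition, and apply orthogonality of the characters of $\mathbb{Z}/s\mathbb{Z}$. Both are the same direct swap-and-orthogonality argument, so there is nothing substantive to add.
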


\section{Aliased Wigner Distribution Deconvolution for Fast Phase Retrieval}
\label{sec:WDD}

As in Section \ref{sec:intro}, we 
let ${\bf x}\in\mathbb{C}^{d}$ denote an unknown quantity of interest and let ${\bf m}\in\mathbb{C}^{d}$ denote a known
measurement mask, and consider measurements $Y_{k,\ell}$ of the form \eqref{equ:Ymap}. By \eqref{equ:GaborCase}, we see we may write $Y_{k,\ell}$ as a noisy windowed Fourier magnitude measurement of the form
\begin{equation}
Y_{k,\ell}=\left|\sum_{n=0}^{d-1}x_{n}m_{n-\ell}\mathbbm{e}^{-\frac{2\pi \mathbbm{i}nk}{d}}\right|^{2}+N_{k,\ell},\quad\text{for }0\leq k,\ell\leq d-1. \label{eq:y_ell_k}
\end{equation}
Let $\mathbf{y}_\ell$ and $\n_\ell$ denote the $\ell$-th columns of the measurement matrix $Y=(Y_{k,\ell})_{0\leq k,\ell\leq d-1}$ and the noise matrix $N=(N_{k,\ell})_{0\leq k,\ell\leq d-1}$ respectively, 
and, as in Section \ref{sec:intro}, let $Y_{K,L}$ be the $K\times L$ partial measurement matrix obtained by restricting $Y$ to rows in $\frac{d}{K}\left[K\right]_{0}$ and columns in $\frac{d}{L}\left[L\right]_{0}$ 
so the the entries of $Y_{K,L}$ are given by \eqref{eqn:YKLdef},
and let $N_{K,L}$ be the analogous matrix obtained by restricting $N$ to rows and columns in $\frac{d}{K}\left[K\right]_{0}$ and $\frac{d}{L}\left[L\right]_{0}.$

Our goal is to recover  $\mathbf{x}$ (up to a global phase) from these measurements with an error that may be bounded in terms of the magnitude of the noise $N.$ Our method will be based on the following result that is an aliased and discrete variant of the Wigner Distribution Deconvolution (WDD)
approach presented in the continuous setting by Chapman  in \cite{Chapman1996}. Together with Lemmas \ref{thm:generalSumCollapse-2}, \ref{thm:generalSumCollapse}, and \ref{thm:generalSumCollapse-1}, it will allow us to recover portions of the rank one matrices $\mathbf{x}\mathbf{x}^*$ and $\widehat{\mathbf{x}}\widehat{\mathbf{x}}^*.$

\begin{thm}
\label{thm:subsamplingInFrequencyAndSpace} 
Let $Y_{K,L}$ be the $K\times L$ partial measurement matrix defined in \eqref{eqn:YKLdef}, and let $N_{K,L}$ be the corresponding partial noise matrix. 
Let $\widetilde{Y}$ and $\widetilde{N}$ be the $L\times K$ matrices defined by
\begin{equation*}
\widetilde{Y}\coloneqq F_LY^T_{K,L}F_K^T\quad\text{and}\quad\widetilde{N}\coloneqq F_LN^T_{K,L}F_K^T.
\end{equation*} 
Then for any $\omega\in\left[K\right]_{0}$
and $\alpha\in\left[L\right]_{0}$,
\begin{align}\widetilde{Y}_{\alpha,\omega}&=\frac{KL}{d^{3}}\sum_{r=0}^{\frac{d}{K}-1}\sum_{\ell=0}^{\frac{d}{L}-1}\left(F_{d}\left(\widehat{{\bf x}}\circ S_{\ell L-\alpha}\overline{{\bf \widehat{x}}}\right)\right)_{\omega-rK}\left(F_{d}\left({\bf \widehat{m}}\circ S_{\alpha-\ell L}\overline{{\bf \widehat{m}}}\right)\right)_{\omega-rK}+\widetilde{N}_{\alpha,\omega}\label{eq:FYF_double_aliasing}\\
&=\frac{KL}{d^2}\sum_{r=0}^{\frac{d}{K}-1}\sum_{\ell=0}^{\frac{d}{L}-1}\mathbbm{e}^{-2\pi \mathbbm{i}(\ell L-\alpha)(\omega-rK)/d}\left(F_{d}\left(\widehat{{\bf x}}\circ S_{\ell L-\alpha}\overline{{\bf \widehat{x}}}\right)\right)_{\omega-rK}\left(F_{d}\left({\bf m}\circ S_{\omega-rK}\overline{{\bf m}}\right)\right)_{\ell L-\alpha}+\widetilde{N}_{\alpha,\omega}\label{eq:FYF_double_aliasing2}\\
&=\frac{KL}{d^2}\sum_{r=0}^{\frac{d}{K}-1}\sum_{\ell=0}^{\frac{d}{L}-1}\mathbbm{e}^{2\pi \mathbbm{i}(\ell L-\alpha)(\omega-rK)/d}\left(F_{d}\left({\bf x}\circ S_{\omega-rK}\overline{{\bf x}}\right)\right)_{\alpha-\ell L}\left(F_{d}\left({\bf \widehat{m}}\circ S_{\alpha-\ell L}\overline{{\bf \widehat{m}}}\right)\right)_{\omega-rK}+\widetilde{N}_{\alpha,\omega}\label{eq:FYF_double_aliasing3}\\
&=\frac{KL}{d}\sum_{r=0}^{\frac{d}{K}-1}\sum_{\ell=0}^{\frac{d}{L}-1}\left(F_{d}\left({\bf x}\circ S_{\omega-rK}\overline{{\bf x}}\right)\right)_{\alpha-\ell L}\left(F_{d}\left({\bf m}\circ S_{\omega-rK}\overline{{\bf m}}\right)\right)_{\ell L-\alpha}+\widetilde{N}_{\alpha,\omega},\label{eq:FYF_double_aliasing4}.
\end{align}
\end{thm}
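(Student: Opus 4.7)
The plan is to compute $\widetilde{Y}_{\alpha,\omega}$ from its definition in three stages: first, recognize the restricted 2D DFT defining $\widetilde{Y}$ as an aliased 2D DFT of the full matrix $Y$ via Lemma \ref{lem:aliasing}; second, evaluate the full 2D DFT of the noiseless part of $Y$ in closed form by expanding the modulus-square and carrying out the two sums, obtaining a product of Wigner-distribution-like terms; third, convert that single identity into the other three equivalent expressions by applying Lemma \ref{lem:fourierIndexSwap} to the $\x$-factor, the $\m$-factor, or both.

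Concretely, after writing $\widetilde{Y}_{\alpha,\omega}$ as $\sum_{k=0}^{K-1}\sum_{\ell=0}^{L-1}\ee^{-2\pi\ii(\omega k/K+\alpha\ell/L)}(Y-N)_{kd/K,\ell d/L}$ and relabeling $k' = kd/K$, $\ell' = \ell d/L$, the weight becomes $\ee^{-2\pi\ii(\omega k'+\alpha\ell')/d}$ and the sum runs over $k'\in\frac{d}{K}[K]_0$ and $\ell'\in\frac{d}{L}[L]_0$. This is exactly what Lemma \ref{lem:aliasing}, applied once in each axis, turns into $\frac{KL}{d^2}\sum_{r=0}^{d/K-1}\sum_{\ell=0}^{d/L-1}\mathcal{Z}_{\omega-rK,\,\alpha-\ell L}$, where $\mathcal{Z}_{\omega,\alpha}$ is the full $d\times d$ DFT of $Y-N$. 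To evaluate $\mathcal{Z}_{\omega,\alpha}$, I would expand $|F_d(\x\circ S_{-\ell}\m)_k|^2$ as a double sum over $n_1,n_2$, use the $k$-sum to force $n_2 = n_1+\omega$, then reindex the $\ell$-variable so that the $m$ and $\overline{m}$ pair decouples from $n_1$; the $n_1$-sum then yields a Fourier coefficient of $\x\circ S_\omega\overline{\x}$ while the inner $\ell$-sum yields one of $\m\circ S_\omega\overline{\m}$. The outcome, $\mathcal{Z}_{\omega,\alpha} = d\,\bigl(F_d(\x\circ S_\omega\overline{\x})\bigr)_\alpha \cdot \bigl(F_d(\m\circ S_\omega\overline{\m})\bigr)_{-\alpha}$, substituted back into the aliased sum, gives form (\ref{eq:FYF_double_aliasing4}) after absorbing one factor of $d$.

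The remaining three expressions all follow from Lemma \ref{lem:fourierIndexSwap}, which provides the swap $\bigl(F_d(\y\circ S_\omega\overline{\y})\bigr)_\alpha = \tfrac{1}{d}\ee^{2\pi\ii\omega\alpha/d}\bigl(F_d(\widehat{\y}\circ S_{-\alpha}\overline{\widehat{\y}})\bigr)_\omega$ for any $\y\in\C^d$. Applying this only to the $\m$-factor in (\ref{eq:FYF_double_aliasing4}) yields the phase and indices of (\ref{eq:FYF_double_aliasing3}); applying it only to the $\x$-factor yields (\ref{eq:FYF_double_aliasing2}) with the opposite sign of the phase; and applying it to both factors causes the two phases to cancel, while the two extra factors of $1/d$ combine with the $KL/d$ prefactor to produce the $KL/d^3$ of (\ref{eq:FYF_double_aliasing}).

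The main obstacle I anticipate is purely notational: the theorem lists four arrangements of the same underlying identity, each with specific choices of which factor carries the phase and the hat, and of whether a shift reads $\alpha-\ell L$ or $\ell L-\alpha$, so confirming that the sign conventions of $F_K^T$, the aliasing offsets $-rK$ and $-\ell L$, and the conjugations in $\x\circ S_\omega\overline{\x}$ line up correctly on every line will require careful bookkeeping. There is no analytic difficulty beyond that; once the full 2D DFT has been factored and aliasing invoked, everything reduces to the DFT identities compiled in Lemmas \ref{lem:DFTproperties}--\ref{lem:indexSwap}.
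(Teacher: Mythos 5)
Your proposal is correct: the two-axis aliasing reduction, the closed form $\mathcal{Z}_{\omega,\alpha}=d\,\bigl(F_d(\x\circ S_\omega\overline{\x})\bigr)_\alpha\bigl(F_d(\m\circ S_\omega\overline{\m})\bigr)_{-\alpha}$ for the full 2D DFT of the noiseless data, and the phase bookkeeping that converts \eqref{eq:FYF_double_aliasing4} into \eqref{eq:FYF_double_aliasing}--\eqref{eq:FYF_double_aliasing3} via Lemma \ref{lem:fourierIndexSwap} all check out. Your strategy matches the paper's at the top level (alias once per axis with Lemma \ref{lem:aliasing}, reduce to the $K=L=d$ Wigner identity, then apply Lemma \ref{lem:fourierIndexSwap} to one or both factors), but the packaging is genuinely different in two respects. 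First, you perform both aliasings up front, reading $\widetilde{Y}_{\alpha,\omega}$ as the doubly aliased full 2D DFT of $Y$ and only then substituting the closed form; the paper instead interleaves aliasing with the algebra, first aliasing the subsampled columns in the $k$-axis (Lemma \ref{lem:subsampledcols}) while keeping the intermediate result as a $d$-periodic convolution, and only afterwards subsampling in $\ell$ and aliasing again through the convolution theorem and Lemma \ref{lem:tildekill}. Second, you obtain the base identity by directly expanding the modulus-squared and collapsing the $k$-sum to force $n_2=n_1+\omega$, which is more elementary than the paper's route through part \ref{lem:absoluteFourierSquared} of Lemma \ref{lem:DFTproperties}, Lemma \ref{lem:indexSwap}, and Lemma \ref{lem:convolutionTheorem}. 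Your factorization is arguably cleaner: it exhibits the theorem transparently as ``the aliased version of Lemma \ref{lem:specialcase}'' and makes Lemma \ref{lem:subsampledcols} unnecessary, at the cost of redoing by hand a computation the paper assembles from its stock of DFT identities. Either way the content is identical and there is no gap.
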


To aid in the readers understanding, before proving Theorem \ref{thm:subsamplingInFrequencyAndSpace}, we will first give a short proof of the following lemma which is the special case of \eqref{eq:FYF_double_aliasing4} where $K=L=d$.  It is the direct analogue of Chapman's WDD
approach as formulated in the continuous setting in \cite{Chapman1996}.

\begin{lem}\label{lem:specialcase} Let $Y$ be the $d\times d$ measurement matrix with entries defined as in \eqref{eq:y_ell_k} and let $N$ be the corresponding noise matrix. Then, the $\omega$-th column of $\widetilde{Y}=F_dY^TF_d^T$ is given by

 \begin{equation}\label{eqn:WDDdd}
\widetilde{Y}_\omega=d\cdot F_{d}\left({\bf x}\circ S_{\omega}{\bf \overline{x}}\right)\circ R\left(F_{d}\left({\bf m}\circ S_{\omega}\overline{{\bf m}}\right)\right)+\widetilde{N}_\omega, 
\end{equation}
where $\widetilde{N}=F_dN^TF_d^T.$
\end{lem}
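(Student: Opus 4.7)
The plan is to compute $\widetilde{Y}_{\alpha,\omega}$ directly as the double DFT $\sum_{k,\ell}\ee^{-2\pi\ii(\alpha\ell+k\omega)/d}Y_{k,\ell}$, performing the $k$-sum and then the $\ell$-sum, and at each stage invoking one identity from Lemma~\ref{lem:DFTproperties} or Lemma~\ref{lem:convolutionTheorem}. The additive noise term splits off immediately by linearity as $\widetilde{N}_{\alpha,\omega}$, so the entire argument concerns the signal part. I then read off the equality of the $\omega$-th columns at the end.

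First, starting from \eqref{eq:y_ell_k}, I would observe that $m_{n-\ell}=(S_{-\ell}\m)_n$, so that $Y_{k,\ell}-N_{k,\ell}=\bigl|F_d(\x\circ S_{-\ell}\m)\bigr|_k^2$. For fixed $\ell$ I carry out the $k$-sum with weight $\ee^{-2\pi\ii k\omega/d}$. Invoking Lemma~\ref{lem:DFTproperties}(\ref{lem:absoluteFourierSquared}) rewrites $|F_d\mathbf{y}|^2$ as $F_d(\mathbf{y}\ast_d\overline{\widetilde{\mathbf{y}}})$, and applying $F_d$ once more yields $F_d^2=d\cdot R$ via Lemma~\ref{lem:DFTproperties}(\ref{FF}). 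Hence
\[
\sum_{k=0}^{d-1}\ee^{-2\pi\ii k\omega/d}\bigl|F_d(\x\circ S_{-\ell}\m)_k\bigr|^2
= d\,\bigl((\x\circ S_{-\ell}\m)\ast_d\overline{\widetilde{\x\circ S_{-\ell}\m}}\bigr)_{-\omega}.
\]
Using Lemma~\ref{lem:DFTproperties}(\ref{BarSTilde}) to simplify $\overline{\widetilde{\x\circ S_{-\ell}\m}}=\overline{\widetilde{\x}}\circ S_\ell\overline{\widetilde{\m}}$ and unfolding the convolution at index $-\omega$, a short index manipulation regroups the four factors $x_n m_{n-\ell}\overline{x_{n+\omega}}\,\overline{m_{n-\ell+\omega}}$ to give
\[
d\sum_{n=0}^{d-1}(\x\circ S_\omega\overline{\x})_n(\m\circ S_\omega\overline{\m})_{n-\ell}
= d\bigl(\mathbf{a}^{(\omega)}\ast_d\widetilde{\mathbf{b}^{(\omega)}}\bigr)_\ell,
\]
where $\mathbf{a}^{(\omega)}:=\x\circ S_\omega\overline{\x}$ and $\mathbf{b}^{(\omega)}:=\m\circ S_\omega\overline{\m}$. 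This is the key payoff: $\x$ and $\m$ now appear exclusively in the ``autocorrelation-like'' combinations matching the right-hand side of \eqref{eqn:WDDdd}, and the $\ell$-dependence sits inside a single convolution.

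Second, I take the DFT in $\ell$ evaluated at $\alpha$. The convolution theorem (Lemma~\ref{lem:convolutionTheorem}) converts $F_d(\mathbf{a}^{(\omega)}\ast_d\widetilde{\mathbf{b}^{(\omega)}})_\alpha$ into the Hadamard product $(F_d\mathbf{a}^{(\omega)})_\alpha\,(F_d\widetilde{\mathbf{b}^{(\omega)}})_\alpha$, and Lemma~\ref{lem:DFTproperties}(\ref{TildeHat}) lets me pull the reversal out of the transform via $F_d\widetilde{\mathbf{b}^{(\omega)}}=R(F_d\mathbf{b}^{(\omega)})$. Collecting all the pieces gives $\widetilde{Y}_{\alpha,\omega}=d\,F_d(\x\circ S_\omega\overline{\x})_\alpha\cdot R(F_d(\m\circ S_\omega\overline{\m}))_\alpha+\widetilde{N}_{\alpha,\omega}$, which is exactly the $\alpha$-entry of the claimed column formula \eqref{eqn:WDDdd}.

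The main obstacle is bookkeeping: each identity in Lemma~\ref{lem:DFTproperties} introduces a reversal, a conjugation, or a sign in the exponent, and the two iterated DFTs together with the squared magnitude pile several of these up simultaneously. Nothing is conceptually deep, but it is easy to misplace a sign or a shift when folding the product $x_n m_{n-\ell}\overline{x_{n+\omega}}\,\overline{m_{n-\ell+\omega}}$ into the autocorrelation form, so I would write each index substitution explicitly and verify the placement of $R$ against Lemma~\ref{lem:DFTproperties}(\ref{TildeHat}) before concluding.
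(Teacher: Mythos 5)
Your proposal is correct and follows essentially the same route as the paper: linearize the squared magnitude via Lemma~\ref{lem:DFTproperties}(\ref{lem:absoluteFourierSquared}), apply the DFT in $k$ using $F_d^2 = d\,R$, regroup the four factors into the autocorrelation form, then apply the convolution theorem in $\ell$ and commute the reversal past $F_d$. The only cosmetic difference is that you perform the index swap and the final reversal step by direct manipulation, where the paper packages these as Lemma~\ref{lem:indexSwap} and Lemma~\ref{lem:tildekill}.
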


\begin{proof}[The Proof of Lemma \ref{lem:specialcase}]
As noted in \eqref{eq:y_ell_k}, we may write $\mathbf{y}_\ell$ as the STFT of $\mathbf{x}$ with window $S_{-\ell}\mathbf{m}.$ Therefore, by  Lemma \ref{lem:DFTproperties}, parts \ref{BarSTilde} and \ref{lem:absoluteFourierSquared}, we see that for any $\ell\in\left[d\right]_{0}$,
\begin{align}
{\bf y}_{\ell} & =\left|F_{d}\left({\bf x}\circ S_{-\ell}{\bf m}\right)\right|^{2}+{\bf \eta}_{\ell}\nonumber \\
 & =F_{d}\left(\left({\bf x}\circ S_{-\ell}{\bf m}\right)\ast_{d}\left(\overline{\widetilde{{\bf x}}}\circ S_{\ell}\overline{\widetilde{{\bf m}}}\right)\right)+{\bf \eta}_{\ell}.\label{eq:y_ell}
\end{align}
Thus, taking a Fourier transform of ${\bf y}_{\ell}$ and applying Lemma \ref{lem:DFTproperties}, part \ref{FF}, 
yields
\[
\left(F_{d}{\bf y}_{\ell}\right)_{\omega}=d\left(\left({\bf x}\circ S_{-\ell}{\bf m}\right)\ast_{d}\left(\overline{\widetilde{{\bf x}}}\circ S_{\ell}\overline{\widetilde{{\bf m}}}\right)\right)_{-\omega}+\left(F_{d}{\bf \eta}_{\ell}\right)_{\omega},
\]
and so, by Lemma \ref{lem:indexSwap}, 
\begin{align}
\left(F_{d}{\bf y}_{\ell}\right)_{\omega} & =d\left(\left({\bf x}\circ S_{\omega}{\bf \overline{x}}\right)\ast_{d}\left(\widetilde{{\bf m}}\circ S_{-\omega}\overline{\widetilde{{\bf m}}}\right)\right)_{\ell}+\left(F_{d}{\bf \eta}_{\ell}\right)_{\omega}.\label{eq:y_hat_ell_k}
\end{align}
Since $\left(F_{d}{\bf y}_{\ell}\right)_{\omega}=(F_dY)_{\omega,\ell},$ taking the transpose of the above equation implies
\[
\left(Y^{T}F_{d}^{T}\right)_{\ell,\omega}=d\left(\left({\bf x}\circ S_{\omega}{\bf \overline{x}}\right)\ast_{d}\left(\widetilde{{\bf m}}\circ S_{-\omega}\overline{\widetilde{{\bf m}}}\right)\right)_{\ell}+\left(N^{T}F_{d}^{T}\right)_{\ell,\omega},
\]
Therefore, the $\omega$-th columns of $Y^{T}F_{d}^{T}$ and  $N^{T}F_{d}^{T}$ 
satisfy
\[
\left(Y^{T}F_{d}^{T}\right)_{\omega}=d\left({\bf x}\circ S_{\omega}{\bf \overline{x}}\right)\ast_{d}\left(\widetilde{{\bf m}}\circ S_{-\omega}\overline{\widetilde{{\bf m}}}\right) +\left(N^{T}F_{d}^{T}\right)_{\omega},
\]
so, taking the Fourier transform of both sides and applying Lemmas \ref{lem:convolutionTheorem} and \ref{lem:tildekill} 
yields
\begin{align*}
\left(F_{d}Y^{T}F_{d}^{T}\right)_{\omega} & =d F_{d}\left({\bf x}\circ S_{\omega}{\bf \overline{x}}\right)\circ F_{d}\left(\widetilde{{\bf m}}\circ S_{-\omega}\overline{\widetilde{{\bf m}}}\right)+\left(F_{d}N^{T}F_{d}^{T}\right)_{\omega}\\
& =d F_{d}\left({\bf x}\circ S_{\omega}{\bf \overline{x}}\right)\circ R\left(F_{d}\left({\bf m}\circ S_{\omega}\overline{{\bf m}}\right)\right)+\left(F_{d}N^{T}F_{d}^{T}\right)_{\omega}.
\end{align*}
Recalling that $\widetilde{Y}=F_dY^TF_d^T$ and $\widetilde{N}=F_dN^TF_d^T$ completes the proof.
\end{proof}

The following lemma 
applies analysis similar to the previous lemma to subsampled column vectors using Lemma \ref{lem:aliasing}.
\begin{lem}\label{lem:subsampledcols}
For $\ell\in\left[d\right]_{0}$ and $\omega\in\left[K\right]_{0}$,
\[
\left(F_{K}Z_{\frac{d}{K}}\left({\bf y}_{\ell}\right)\right)_{\omega}=K\sum_{r=0}^{\frac{d}{K}-1}\left(\left({\bf x}\circ S_{\omega-rK}\overline{{\bf x}}\right)\ast_{d}\left(\widetilde{{\bf m}}\circ S_{rK-\omega}\overline{\widetilde{{\bf m}}}\right)\right)_{\ell}+\left(F_{K}Z_{\frac{d}{K}}\left({\bf \eta}_{\ell}\right)\right)_{\omega}.
\]
\end{lem}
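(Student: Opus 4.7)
The plan is to reduce the statement directly to the computation already performed in the proof of Lemma \ref{lem:specialcase} and then apply the aliasing identity of Lemma \ref{lem:aliasing} with $s = d/K$. More concretely, I would first recall from the proof of Lemma \ref{lem:specialcase} the pointwise identity obtained just after \eqref{eq:y_hat_ell_k}, namely
\[
(F_{d}{\bf y}_{\ell})_{\omega} \;=\; d\bigl(({\bf x}\circ S_{\omega}\overline{{\bf x}})\ast_{d}(\widetilde{{\bf m}}\circ S_{-\omega}\overline{\widetilde{{\bf m}}})\bigr)_{\ell}+(F_{d}\boldsymbol{\eta}_{\ell})_{\omega},
\]
which was derived without any subsampling assumption and therefore holds verbatim for arbitrary $\omega\in[d]_0$.

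Next, I would apply Lemma \ref{lem:aliasing} with $s=d/K$ to the vectors $\mathbf{y}_\ell$ and $\boldsymbol{\eta}_\ell$ in $\mathbb{C}^d$. This yields
\[
\bigl(F_{K}Z_{d/K}({\bf y}_{\ell})\bigr)_{\omega}\;=\;\frac{K}{d}\sum_{r=0}^{d/K-1}(F_{d}{\bf y}_{\ell})_{\omega-rK},\qquad \bigl(F_{K}Z_{d/K}(\boldsymbol{\eta}_{\ell})\bigr)_{\omega}\;=\;\frac{K}{d}\sum_{r=0}^{d/K-1}(F_{d}\boldsymbol{\eta}_{\ell})_{\omega-rK}.
\]
Substituting the displayed identity for $(F_d\mathbf{y}_\ell)_{\omega-rK}$ into the first expression and pulling the factor of $d$ outside cancels the $1/d$, leaving exactly the sum
\[
K\sum_{r=0}^{d/K-1}\bigl(({\bf x}\circ S_{\omega-rK}\overline{{\bf x}})\ast_{d}(\widetilde{{\bf m}}\circ S_{rK-\omega}\overline{\widetilde{{\bf m}}})\bigr)_{\ell}
\]
plus the aliased noise term, which by the second aliasing identity is precisely $\bigl(F_{K}Z_{d/K}(\boldsymbol{\eta}_{\ell})\bigr)_{\omega}$.

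There is really no obstacle here beyond bookkeeping. The only thing to be careful about is that the shift $S_{-\omega}$ in the per-frequency identity becomes $S_{-(\omega-rK)} = S_{rK-\omega}$ after the aliasing substitution, and that the factor $\tfrac{1}{s} = \tfrac{K}{d}$ from Lemma \ref{lem:aliasing} combines correctly with the leading $d$ from the convolution expression to give the prefactor $K$ in the final statement. Since the signs in the modulus of \eqref{eq:y_ell_k} are irrelevant and the convolution theorem has already been invoked in Lemma \ref{lem:specialcase}, no further tools are needed.
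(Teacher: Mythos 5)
Your proposal is correct and follows essentially the same route as the paper's own proof: both reduce the claim to the identity \eqref{eq:y_hat_ell_k} established in the proof of Lemma \ref{lem:specialcase} and then apply the aliasing identity of Lemma \ref{lem:aliasing} with $s = d/K$, with the factor $d\cdot\frac{K}{d}=K$ and the shift $S_{-(\omega-rK)}=S_{rK-\omega}$ handled exactly as you describe. The only cosmetic difference is that the paper first subtracts $\mathbf{\eta}_\ell$ and invokes linearity at the end, whereas you alias the noisy vector directly and identify the aliased noise term separately; these are equivalent.
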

\begin{proof}
As in the proof of Lemma \ref{lem:specialcase}, the $\ell^{\text{th}}$ columns of the $Y$ and $N$ satisfy
\begin{align*}
{\bf y}_{\ell} & =\left|F_{d}\left({\bf x}\circ S_{-\ell}{\bf m}\right)\right|^{2}\nonumber \\ 
 & =F_{d}\left(\left({\bf x}\circ S_{-\ell}{\bf m}\right)\ast_{d}\left(\overline{\widetilde{{\bf x}}}\circ S_{\ell}\overline{\widetilde{{\bf m}}}\right)\right)+{\bf \eta}_{\ell}.\label{eq:y_ell}
\end{align*}
Therefore, subtracting $\mathbf{\eta}_{\ell}$ from both sides, taking the Fourier transform, applying Lemma \ref{lem:aliasing}, and then using \eqref{eq:y_hat_ell_k} 
we see
\begin{align*}
\left(F_{K}\left(Z_{\frac{d}{K}}\left({\bf y}_{\ell}-{\bf \eta}_{\ell}\right)\right)\right)_{\omega} & =\frac{K}{d}\sum_{r=0}^{\frac{d}{K}-1}\left(F_d({\bf y}_{\ell}-{\bf \eta}_{\ell})\right)_{\omega-rK}\\
 & =d\cdot\frac{K}{d}\sum_{r=0}^{\frac{d}{K}-1}\left(\left({\bf x}\circ S_{\omega-rK}{\bf \overline{x}}\right)\ast_{d}\left(\widetilde{{\bf m}}\circ S_{rK-\omega}\overline{\widetilde{{\bf m}}}\right)\right)_{\ell}\\
 & =K\sum_{r=0}^{\frac{d}{K}-1}\left(\left({\bf x}\circ S_{\omega-rK}{\bf \overline{x}}\right)\ast_{d}\left(\widetilde{{\bf m}}\circ S_{rK-\omega}\overline{\widetilde{{\bf m}}}\right)\right)_{\ell}.
\end{align*}
The lemma follows from the linearity of the Fourier transform and of the subsampling operator $Z_{\frac{d}{K}}.$
\end{proof}

Now we shall prove Theorem \ref{thm:subsamplingInFrequencyAndSpace}.

\begin{proof}[The Proof of Theorem \ref{thm:subsamplingInFrequencyAndSpace}]
 Noting that  $Y_{K,L}$ is obtained by subsampling the rows and columns of $Y,$  we see that 
the $\ell$-th column of  $Y_{K,L}-N_{K,L}$ is given by
\begin{equation*}
\left(Y_{K,L}-N_{K,L}\right)_\ell =\left(Z_{\frac{d}{K}}\left({\bf y}_{\ell\frac{d}{L}}-{\bf \eta}_{\ell\frac{d}{L}}\right)\right).
\end{equation*}
Therefore, applying Lemma \ref{lem:subsampledcols} we see
\begin{align*}
\left(\left(Y_{K,L}-N_{K,L}\right)^TF_K^T\right)_{\ell,\omega}
&=\left(F_K\left(Y_{K,L}-N_{K,L}\right)_{\ell}\right)_{\omega}\\
&=\left(F_K\left( Z_{\frac{d}{K}}\left({\bf y}_{\ell\frac{d}{L}}-{\bf \eta}_{\ell\frac{d}{L}}\right)\right)\right)_{\omega}\\
& =K\sum_{r=0}^{\frac{d}{K}-1}\left(\left({\bf x}\circ S_{\omega-rK}\overline{{\bf x}}\right)\ast_{d}\left(\widetilde{{\bf m}}\circ S_{rK-\omega}\overline{\widetilde{{\bf m}}}\right)\right)_{\ell\frac{d}{L}}\\
 & =K\left(Z_{\frac{d}{L}}\left(\sum_{r=0}^{\frac{d}{K}-1}\left({\bf x}\circ S_{\omega-rK}\overline{{\bf x}}\right)\ast_{d}\left(\widetilde{{\bf m}}\circ S_{rK-\omega}\overline{\widetilde{{\bf m}}}\right)\right)\right)_{\ell}.
\end{align*}
 Thus,  the $\omega$-th column of $\left(Y_{K,L}-N_{K,L}\right)^TF_K^T$ is given by 
\begin{equation*}
\left(\left(Y_{K,L}-N_{K,L}\right)^TF_K^T\right)_{\omega}
  =K\left(Z_{\frac{d}{L}}\left(\sum_{r=0}^{\frac{d}{K}-1}\left({\bf x}\circ S_{\omega-rK}\overline{{\bf x}}\right)\ast_{d}\left(\widetilde{{\bf m}}\circ S_{rK-\omega}\overline{\widetilde{{\bf m}}}\right)\right)\right).
\end{equation*}

Taking the Fourier transform of both sides and applying  Lemmas \ref{lem:aliasing},
  \ref{lem:convolutionTheorem}, and \ref{lem:tildekill} we see that
\begin{align*}\left(F_L\left(Y_{K,L}-N_{K,L}\right)F_K^T\right)_{\alpha,\omega}&=\left(F_L\left(\left(Y_{K,L}-N_{K,L}\right)^TF_K^T\right)_{\omega}
\right)_\alpha\nonumber\\
 & =\frac{KL}{d}\sum_{r=0}^{\frac{d}{K}-1}\sum_{\ell=0}^{\frac{d}{L}-1}\left(F_{d}\left({\bf x}\circ S_{\omega-rK}\overline{{\bf x}}\right)\right)_{\alpha-\ell L}\left(F_{d}\left(\widetilde{{\bf m}}\circ S_{rK-\omega}\overline{\widetilde{{\bf m}}}\right)\right)_{\alpha-\ell L}\\
& =\frac{KL}{d}\sum_{r=0}^{\frac{d}{K}-1}\sum_{\ell=0}^{\frac{d}{L}-1}\left(F_{d}\left({\bf x}\circ S_{\omega-rK}\overline{{\bf x}}\right)\right)_{\alpha-\ell L}\left(F_{d}\left({\bf m}\circ S_{\omega-rK}\overline{{\bf m}}\right)\right)_{\ell L-\alpha}
\end{align*}
for all $\alpha\in\left[L\right]_{0}.$  

Using the linearity of the Fourier transform and the definitions of $\widetilde{Y}$ and $\widetilde{N}$ completes the proof of \eqref{eq:FYF_double_aliasing4}.  
\eqref{eq:FYF_double_aliasing}, \eqref{eq:FYF_double_aliasing2}, and \eqref{eq:FYF_double_aliasing3} follow by using Lemma \ref{lem:fourierIndexSwap} to see that 
\begin{equation*}
\left(F_{d}\left({\bf x}\circ S_{\omega-rK}\overline{{\bf x}}\right)\right)_{\alpha-\ell L}
=\frac{1}{d}\mathbbm{e}^{-2\pi \mathbbm{i} (\ell L-\alpha)(\omega-rK)/ d}\left(F_{d}\left(\widehat{{\bf x}}\circ S_{\ell L-\alpha}\overline{{\bf \widehat{x}}}\right)\right)_{\omega-rK},
\end{equation*}
and
\begin{equation*}
\left(F_{d}\left({\bf m}\circ S_{\omega-rK}\overline{{\bf m}}\right)\right)_{\ell L-\alpha}
=\frac{1}{d}\mathbbm{e}^{2\pi \mathbbm{i} (\ell L-\alpha)(\omega-rK)/ d}\left(F_{d}\left({\bf \widehat{m}}\circ S_{\alpha-\ell L}\overline{{\bf \widehat{m}}}\right)\right)_{\omega-rK}.
\end{equation*}
\end{proof}

\subsection{Solving for Diagonal Bands of the Rank-One Matrices}

We wish to use Theorem \ref{thm:subsamplingInFrequencyAndSpace} to solve for diagonal bands of the rank-one matrix $\mathbf{x}\mathbf{x}^*.$ In the case where $K=L=d,$ one can use \eqref{eqn:WDDdd} to see that for $\omega\in[d]_0$
\begin{equation*}
\mathbf{x}\circ S_\omega \overline{\mathbf{x}} = \frac{1}{d} F_d^{-1}\left( \frac{\left(F_dY^TF_d^T\right)_\omega}{F_d\left(\widetilde{\mathbf{m}}\circ S_{-\omega}\overline{\widetilde{\mathbf{m}}}\right)}\right)-\frac{1}{d} F_d^{-1}\left( \frac{\left(F_dN^TF_d^T\right)_\omega}{F_d\left(\widetilde{\mathbf{m}}\circ S_{-\omega}\overline{\widetilde{\mathbf{m}}}\right)}\right).
\end{equation*}
 
However, in general, the right-hand side of \eqref{eq:FYF_double_aliasing}-\eqref{eq:FYF_double_aliasing4} are linear combinations of multiple terms and therefore, it is not as straightforward to solve for these diagonal bands. In this subsection, we present several lemmas which make different assumptions on the spatial and frequency supports of $\mathbf{x},$  $\mathbf{m},$ and $\widehat{\mathbf{m}}$ and identify special cases where these sums reduce to a single nonzero term. In these cases, we will then be able to solve for diagonal bands of either $\mathbf{x}\mathbf{x}^*$ or $\widehat{\mathbf{x}}\widehat{\mathbf{x}}^*$ by formulas similar to the one above. We will use Lemmas \ref{thm:generalSumCollapse-2} and \ref{thm:generalSumCollapse}  in the proofs of Theorems  \ref{thm:Algorithm_2_guarantees} and \ref{thm:Algorithm_1_guarantees}. We state Lemma \ref{thm:generalSumCollapse-1} in order to demonstrate that Wigner deconvolution approach can also be applied to the setting considered in \cite{iwen2018phase}. We will provide the proof of Lemma \ref{thm:generalSumCollapse}. The proofs of Lemmas \ref{thm:generalSumCollapse-2} and \ref{thm:generalSumCollapse-1} are nearly identical.

The first lemma in this section assumes that $\mathbf{x}$ is bandlimited and the spatial support of $\mathbf{m}$ is contained in an interval of length $\delta.$  It allows us to recover diagonal bands of the rank-one matrix $\mathbf{\widehat{x}}\mathbf{\widehat{x}}^*.$
\begin{lem}
\label{thm:generalSumCollapse-2} Let ${\bf x}, {\bf m}\in\mathbb{C}^{d}$
 with $\mbox{supp}\left(\widehat{{\bf x}}\right)\subseteq\left[\gamma\right]_{0}$
and  $\mbox{supp}\left({\bf m}\right)\subseteq\left[\delta\right]_{0}$.
Let $K$ and $L$ divide $d,$ and let $Y_{K,L}$  be the $K\times L$ partial measurement matrix defined as in \eqref{eqn:YKLdef} and let $N_{K,L}$ be the corresponding subsampled noise matrix.
As in the statement of Theorem \ref{thm:subsamplingInFrequencyAndSpace},  let
\begin{equation*}
\widetilde{Y}= F_LY_{K,L}F_K^T\quad\text{and}\quad \widetilde{N}= F_LN_{K,L}F_K^T.
\end{equation*} 
Then for any $\alpha\in\left[L\right]_{0}$ and $\omega\in\left[K\right]_{0}$,\begin{align*}
\widetilde{Y}_{\alpha,\omega} & =\frac{KL}{d^{2}}\sum_{r=0}^{\frac{d}{K}-1}\sum_{\ell=0}^{\frac{d}{L}-1}\mathbbm{e}^{\frac{2\pi \mathbbm{i}}{d}\left(\omega-rK\right)\left(\alpha-\ell L\right)}\left(F_{d}\left({\bf \widehat{{\bf x}}}\circ S_{\ell L-\alpha}\overline{{\bf \widehat{{\bf x}}}}\right)\right)_{\omega-rK}\left(F_{d}\left({\bf m}\circ S_{\omega-rK}\overline{{\bf m}}\right)\right)_{\ell L-\alpha} +\widetilde{N}_{\alpha,\omega}\\
 &=\frac{KL}{d^{3}}\sum_{r=0}^{\frac{d}{K}-1}\sum_{\ell=0}^{\frac{d}{L}-1}\left(F_{d}\left({\bf \widehat{{\bf x}}}\circ S_{\ell L-\alpha}\overline{{\bf \widehat{{\bf x}}}}\right)\right)_{\omega-rK}\left(F_{d}\left(\widehat{{\bf m}}\circ S_{\alpha-\ell L}\overline{\widehat{{\bf m}}}\right)\right)_{\omega-rK}+\widetilde{N}_{\alpha,\omega}.
\end{align*}
Moreover, if $K=\delta-1+\kappa$ for some $2\le\kappa\leq\delta$ and
$L=\gamma-1+\xi$ for some $1\leq\xi\leq\gamma$, and if $0\leq\omega\leq \kappa-1$ or $K-\kappa-1\leq \omega\leq K-1$ 
and $0\leq \alpha\leq \xi-1$ or $L-\xi+1\leq \alpha\leq L-1,$
the sum above collapses to only one term, so that
\begin{equation*}
\widetilde{Y}_{\alpha,\omega}=
\begin{cases}
\frac{KL}{d^{3}}\left(F_{d}\left(\widehat{{\bf x}}\circ S_{-\alpha}\overline{\widehat{{\bf x}}}\right)\right)_{\omega}\left(F_{d}\left(\widehat{{\bf m}}\circ S_{\alpha}\overline{\widehat{{\bf m}}}\right)\right)_{\omega} +\widetilde{N}_{\alpha,\omega} &\text{ if } 0\leq\alpha\leq\xi-1\text{ and }0\leq\omega\leq\kappa-1\\
\frac{KL}{d^{3}}\left(F_{d}\left(\widehat{{\bf x}}\circ S_{-\alpha}\overline{\widehat{{\bf x}}}\right)\right)_{\omega-K}\left(F_{d}\left(\widehat{{\bf m}}\circ S_{\alpha}\overline{\widehat{{\bf m}}}\right)\right)_{\omega-K} +\widetilde{N}_{\alpha,\omega} &\text{ if } 0\leq\alpha\leq\xi-1\text{ and }\delta\leq\omega\leq K-1\\
\frac{KL}{d^{3}}\left(F_{d}\left(\widehat{{\bf x}}\circ S_{L-\alpha}\overline{\widehat{{\bf x}}}\right)\right)_{\omega}\left(F_{d}\left(\widehat{{\bf m}}\circ S_{\alpha-L}\overline{\widehat{{\bf m}}}\right)\right)_{\omega} +\widetilde{N}_{\alpha,\omega} &\text{ if } \gamma\leq\alpha\leq L-1\text{ and }0\leq\omega\leq\kappa-1\\
\frac{KL}{d^{3}}\left(F_{d}\left(\widehat{{\bf x}}\circ S_{L-\alpha}\overline{\widehat{{\bf x}}}\right)\right)_{\omega-K}\left(F_{d}\left(\widehat{{\bf m}}\circ S_{\alpha-L}\overline{\widehat{{\bf m}}}\right)\right)_{\omega-K} +\widetilde{N}_{\alpha,\omega} &\text{ if } \gamma\leq\alpha\leq L-1\text{ and }\delta\leq\omega\leq K-1
\end{cases}.
\end{equation*}
\end{lem}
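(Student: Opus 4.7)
The plan is to derive the two double-sum identities as direct consequences of Theorem~\ref{thm:subsamplingInFrequencyAndSpace}, and then to use the support hypotheses on $\widehat{\mathbf{x}}$ and $\mathbf{m}$ to eliminate all but one summand in each of the four stated subcases.

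For the first part, equations \eqref{eq:FYF_double_aliasing2} and \eqref{eq:FYF_double_aliasing} of Theorem~\ref{thm:subsamplingInFrequencyAndSpace} already give the two displayed identities once one notes that $-(\ell L - \alpha)(\omega - rK)/d = (\omega - rK)(\alpha - \ell L)/d$, so no additional work is required beyond invoking the theorem.

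For the collapse, I would exploit the elementary observation that, for any integer $\sigma$, the Hadamard product $\widehat{\mathbf{x}} \circ S_{\sigma} \overline{\widehat{\mathbf{x}}}$ is identically zero unless $\sigma \bmod d$ lies in $\{-(\gamma-1), \ldots, \gamma-1\} \bmod d$, since both factors require indices in $[\gamma]_0$; the analogous statement for $\mathbf{m} \circ S_{\tau} \overline{\mathbf{m}}$ requires $\tau \bmod d$ to lie in $\{-(\delta-1), \ldots, \delta-1\} \bmod d$. Because $F_d$ is invertible, the same vanishing transfers to the Fourier-evaluated factors appearing in each summand. Using $L = \gamma - 1 + \xi$ (with $\xi \le \gamma$) and $K = \delta - 1 + \kappa$ (with $\kappa \le \delta$), I would then check that among $\ell \in \{0, \ldots, d/L - 1\}$ the only value with $\ell L - \alpha \bmod d$ in the admissible window is $\ell = 0$ when $0 \le \alpha \le \xi - 1$ and $\ell = 1$ when $\gamma \le \alpha \le L - 1$; similarly, among $r \in \{0, \ldots, d/K - 1\}$ the only admissible value is $r = 0$ when $0 \le \omega \le \kappa - 1$ and $r = 1$ when $\delta \le \omega \le K - 1$. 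Substituting the surviving $(r, \ell)$ into the first displayed identity yields the four stated expressions.

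The main obstacle is not conceptual but rather the careful wrap-around bookkeeping required to rule out $\ell \ge 2$ and $r \ge 2$. One must compare the range of $\ell L - \alpha \bmod d$ as $\ell$ varies against the window $\{-(\gamma-1), \ldots, \gamma-1\} \bmod d$, and similarly for $\omega - rK \bmod d$ against $\{-(\delta-1), \ldots, \delta-1\} \bmod d$, using the inequalities $L \ge \gamma$, $K > \delta$, and the upper bounds $\ell L \le d - L$, $r K \le d - K$. Once these bounds are in hand the four cases follow mechanically.
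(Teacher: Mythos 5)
Your proposal is correct and follows essentially the same route as the paper: the paper obtains the two double-sum identities by specializing Theorem~\ref{thm:subsamplingInFrequencyAndSpace}, and establishes the collapse (explicitly for the companion Lemma~\ref{thm:generalSumCollapse}, whose proof it declares "nearly identical" to this one) by precisely the support/window bookkeeping you describe, showing that only $\ell=0$ or $\ell=1$ (and here additionally only $r=0$ or $r=1$) can place $\ell L-\alpha$ and $\omega-rK$ in the admissible windows of half-widths $\gamma$ and $\delta$. The only cosmetic difference is that substituting the surviving indices into the second displayed identity, rather than the first, lands directly on the stated $\frac{KL}{d^3}$ form without a final appeal to Lemma~\ref{lem:fourierIndexSwap}.
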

The next lemma is similar to the previous one, but replaces the assumptions that $\mathbf{m}$ has compact spatial support and that $\mathbf{x}$ is bandlimited and with the assumption  that $\mathbf{m}$ is bandlimited. It allows us to recover diagonals of $\mathbf{x}\mathbf{x}^*.$
\begin{lem}
\label{thm:generalSumCollapse} Let ${\bf x},{\bf m}\in\mathbb{C}^{d}$%
and assume $\mbox{supp}\left({\bf \widehat{{\bf m}}}\right)\subseteq\left[\rho\right]_{0}$.
Let $L$ divide $d,$ let $Y_{d,L}$  be the $d\times L$ partial measurement matrix defined as in \eqref{eqn:YKLdef}, and let $N_{d,L}$ be the corresponding partial noise matrix. As in the statement of Theorem \ref{thm:subsamplingInFrequencyAndSpace},  let
\begin{equation*}
\widetilde{Y}= F_LY_{d,L}F_d^T\quad\text{and}\quad \widetilde{N}= F_LN_{d,L}F_d^T.
\end{equation*} 
 Then for any $\alpha\in\left[L\right]_{0}$
and $\omega\in\left[d\right]_{0}$,
\begin{equation}\widetilde{Y}_{\alpha,\omega}=\frac{L}{d^{2}}\sum_{\ell=0}^{\frac{d}{L}-1}\left(F_{d}\left({\bf {\bf \widehat{x}}}\circ S_{\ell L-\alpha}\overline{{\bf {\bf \widehat{x}}}}\right)\right)_{\omega}\left(F_{d}\left({\bf \widehat{{\bf m}}}\circ S_{\alpha-\ell L}\overline{{\bf \widehat{{\bf m}}}}\right)\right)_{\omega}+\widetilde{N}_{\alpha,\omega}.\label{eq:FYF_single_aliasing}
\end{equation}
Moreover, if $L=\rho+\kappa-1$ for some $2\le\kappa\leq\rho$,
then for all $\omega\in\left[d\right]_{0}$ and all $\alpha$ such that either $0\leq \alpha\leq \kappa-1$ or $\rho\leq\alpha\leq L-1,$ the sum above reduces to a single term and
\begin{align*}
\widetilde{Y}_{\alpha,\omega}&=\begin{cases}
\frac{L}{d^{2}}\left(F_{d}\left({\bf {\bf \widehat{{\bf x}}}}\circ S_{-\alpha}\overline{{\bf {\bf \widehat{{\bf x}}}}}\right)\right)_{\omega}\left(F_{d}\left({\bf \widehat{{\bf m}}}\circ S_{\alpha}\overline{{\bf \widehat{{\bf m}}}}\right)\right)_{\omega}+\widetilde{N}_{\alpha,\omega}, & \text{\ensuremath{\text{if }0\leq \alpha\leq \kappa-1}}\\
\frac{L}{d^{2}}\left(F_{d}\left({\bf {\bf \widehat{{\bf x}}}}\circ S_{L-\alpha}\overline{{\bf {\bf \widehat{{\bf x}}}}}\right)\right)_{\omega}\left(F_{d}\left({\bf \widehat{{\bf m}}}\circ S_{\alpha-L}\overline{{\bf \widehat{{\bf m}}}}\right)\right)_{\omega}+\widetilde{N}_{\alpha,\omega}, & \text{if }\rho \leq \alpha\leq L-1
\end{cases}.
\end{align*}

\end{lem}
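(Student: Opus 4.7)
The plan is to specialize Theorem~\ref{thm:subsamplingInFrequencyAndSpace} to $K = d$ and then use the support assumption $\mbox{supp}(\widehat{\mathbf{m}}) \subseteq [\rho]_0$ to collapse the $\ell$-sum. Setting $K = d$ in \eqref{eq:FYF_double_aliasing} makes $d/K = 1$, so only $r = 0$ contributes (with $\omega - rK = \omega$) and the prefactor $KL/d^3$ simplifies to $L/d^2$; this yields the general identity \eqref{eq:FYF_single_aliasing} immediately.

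For the second, single-term claim, the key observation is that the mask factor $\bigl(F_d(\widehat{\mathbf{m}} \circ S_{\alpha - \ell L}\overline{\widehat{\mathbf{m}}})\bigr)_\omega$ vanishes for \emph{every} $\omega$ whenever $\widehat{\mathbf{m}} \circ S_{\alpha - \ell L}\overline{\widehat{\mathbf{m}}}$ is the zero vector in $\mathbb{C}^d$, and this in turn happens exactly when the supports $\mbox{supp}(\widehat{\mathbf{m}}) \subseteq [\rho]_0$ and $\mbox{supp}(S_{\alpha - \ell L}\overline{\widehat{\mathbf{m}}}) = ([\rho]_0 - (\alpha - \ell L)) \bmod d$ are disjoint in $\mathbb{Z}/d\mathbb{Z}$. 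Writing $s \coloneqq (\ell L - \alpha) \bmod d$, the two cyclically translated copies of $[\rho]_0$ intersect if and only if $s \in [0, \rho - 1] \cup [d - \rho + 1, d - 1]$.

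A short case analysis using $L = \rho + \kappa - 1$ and $2 \leq \kappa \leq \rho$ then finishes the proof. For $0 \leq \alpha \leq \kappa - 1$: the $\ell = 0$ term has $s = (-\alpha) \bmod d \in \{0\} \cup [d - \kappa + 1, d - 1] \subseteq \{0\} \cup [d - \rho + 1, d - 1]$ and survives, while for $1 \leq \ell \leq d/L - 1$ the shift $s = \ell L - \alpha \in [\rho, d - L]$ misses both overlap windows. For $\rho \leq \alpha \leq L - 1$: the $\ell = 1$ term has $s = L - \alpha \in [1, \kappa - 1] \subseteq [1, \rho - 1]$ and survives, while the shifts produced by $\ell = 0$ or $\ell \geq 2$ again fall into the ``gap.'' Substituting the unique surviving index $\ell \in \{0, 1\}$ into \eqref{eq:FYF_single_aliasing} produces the two formulas in the statement. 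The only real subtlety is the careful bookkeeping of these modular overlap conditions; the choice $L = \rho + \kappa - 1$ with $2 \leq \kappa \leq \rho$ is tuned precisely so that at most one $\ell$ places $s$ inside the overlap window, and beyond this no new mechanism is required.
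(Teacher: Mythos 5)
Your proposal is correct and follows essentially the same route as the paper: specialize Theorem~\ref{thm:subsamplingInFrequencyAndSpace} to $K=d$ (so only $r=0$ survives and $KL/d^3 = L/d^2$), then use $\mbox{supp}(\widehat{\mathbf{m}})\subseteq[\rho]_0$ to show that exactly one $\ell\in\{0,1\}$ makes $\widehat{\mathbf{m}}\circ S_{\alpha-\ell L}\overline{\widehat{\mathbf{m}}}$ nonzero. The only difference is cosmetic: the paper states the overlap condition as $|\alpha-\ell L|<\rho$ with signed representatives, while you phrase it as an explicit intersection of cyclic translates of $[\rho]_0$ modulo $d$ -- your version is, if anything, slightly more careful about the circular wrap-around.
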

\begin{proof}
 (\ref{eq:FYF_single_aliasing}) follows from  
 Theorem \ref{thm:subsamplingInFrequencyAndSpace} by setting $K=d$ in \eqref{eq:FYF_double_aliasing}. 
To prove the second claim, we note that by the assumption that $\mbox{supp}\left({\bf \widehat{{\bf m}}}\right)\subseteq\left[\rho\right]_{0},$
 ${\bf \widehat{{\bf m}}}\circ S_{\alpha-\ell L}\overline{{\bf \widehat{{\bf m}}}}={\bf 0}$ unless 
\begin{equation}\label{eq:withindelta}
|\alpha-\ell L| < \rho. 
\end{equation}
If $L=\rho-1+\kappa,$ and $0\leq \alpha\leq \kappa-1$, this can only occur if $\ell=0.$ Indeed, if $\ell\geq 1$ then
\begin{equation*}
\alpha-\ell L\leq \alpha-L \leq \kappa-1-(\rho-1+\kappa)=-\rho,
\end{equation*}
and if $\ell\leq -1,$ then
\begin{equation*}
\alpha-\ell L\geq \alpha+L\geq L=\rho-1+\kappa\geq \rho.
\end{equation*}
Therefore, all other terms in the above sum are zero, and the right-hand side of \eqref{eq:FYF_single_aliasing} reduces to the desired result. Likewise, if $\rho\leq \alpha\leq L-1,$ then \eqref{eq:withindelta} can only hold when $\ell=1.$ 
\end{proof}
As in Lemma \ref{thm:generalSumCollapse-2}, the following lemma assumes that the spatial support of $\mathbf{m}$ is contained in an interval of length $\delta$ and allows us to recover diagonals of $\mathbf{\widehat{x}}\mathbf{\widehat{x}}^*.$  However, it differs in that it assumes that $L=d,$ but does not assume that  that $\mathbf{x}$ is $\gamma$-bandlimited.

\begin{lem}
\label{thm:generalSumCollapse-1} Let ${\bf x}, {\bf m}\in\mathbb{C}^{d}$ with $\mbox{supp}\left({\bf m}\right)\subseteq\left[\delta\right]_{0}$.
Let $K$ divide $d,$  let $Y_{K,d}$  be the $K\times d$ partial measurement matrix defined as in \eqref{eqn:YKLdef}, and let $N_{K,d}$ be the corresponding subsampled noise matrix.
As in the statement of Theorem \ref{thm:subsamplingInFrequencyAndSpace},  let
\begin{equation*}
\widetilde{Y}= F_dY_{K,d}F_K^T\quad\text{and}\quad \widetilde{N}= F_dN_{K,d}F_K^T.
\end{equation*} 
Then for any $\alpha\in\left[d\right]_{0}$ and $\omega\in\left[K\right]_{0}$,
\begin{align*}
\widetilde{Y}_{\alpha,\omega} & =\frac{K}{d^{2}}\sum_{r=0}^{\frac{d}{K}-1}\left(F_{d}\left(\widehat{{\bf x}}\circ S_{-\alpha}\overline{{\bf \widehat{{\bf x}}}}\right)\right)_{\omega-rK}\left(F_{d}\left(\widehat{{\bf m}}\circ S_{\alpha}\overline{\widehat{{\bf m}}}\right)\right)_{\omega-rK}+\widetilde{N}_{\alpha,\omega}\\
 & =K\sum_{r=0}^{\frac{d}{K}-1}\left(F_{d}\left({\bf x}\circ S_{\omega-rK}\overline{{\bf x}}\right)\right)_{\alpha}\left(F_{d}\left({\bf m}\circ S_{\omega-rK}\overline{{\bf m}}\right)\right)_{-\alpha}+\widetilde{N}_{\alpha,\omega}.
\end{align*}
Moreover, if $K=\delta-1+\kappa$ for some $2\le\kappa\leq\delta$, and
if  either $0\leq \omega\leq \kappa-1$ or $\delta\leq\omega\leq K-1,$ 
then for all $\alpha\in\left[d\right]_{0}$, the sum above reduces
to only one term and

\[
\widetilde{Y}_{\alpha,\omega}=\begin{cases}
K\left(F_{d}\left({\bf x}\circ S_{\omega}\overline{{\bf x}}\right)\right)_{\alpha}\left(F_{d}\left({\bf m}\circ S_{\omega}\overline{{\bf m}}\right)\right)_{-\alpha} +\widetilde{N}_{\alpha,\omega}, & \text{if }0\leq \omega\leq \kappa-1\\
K\left(F_{d}\left({\bf x}\circ S_{\omega-K}\overline{{\bf x}}\right)\right)_{\alpha}\left(F_{d}\left({\bf m}\circ S_{\omega-K}\overline{{\bf m}}\right)\right)_{-\alpha} +\widetilde{N}_{\alpha,\omega}, & \text{if }\delta\leq \omega\leq K-1
\end{cases}.
\]

\end{lem}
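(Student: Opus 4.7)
The plan is to follow the template of the proof of Lemma~\ref{thm:generalSumCollapse} with the roles of space and frequency interchanged. \textbf{First}, I would obtain the two displayed identities for $\widetilde{Y}_{\alpha,\omega}$ by specializing Theorem~\ref{thm:subsamplingInFrequencyAndSpace} to the case $L=d$. Since $d/L=1$, the outer summation index $\ell$ can only take the value $\ell=0$, so that $\ell L-\alpha=-\alpha$ and $\alpha-\ell L=\alpha$. Applying this to \eqref{eq:FYF_double_aliasing} yields the first claimed equality (with prefactor $KL/d^3=K/d^2$), and applying it to \eqref{eq:FYF_double_aliasing4} yields the second (with prefactor $KL/d=K$). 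The identification between the $\widehat{\m}$--factor in the first form and the $\m$--factor in the second is automatic from the two equivalent expressions supplied by the theorem.

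\textbf{Second}, I would establish the collapse of the surviving sum over $r$. The hypothesis $\mbox{supp}(\m)\subseteq[\delta]_0$ implies that $\m\circ S_s\overline{\m}$, and hence $F_d(\m\circ S_s\overline{\m})$, vanishes identically unless $s\bmod d$ lies in the ``near zero'' window $[0,\delta-1]\cup[d-\delta+1,d-1]$; otherwise there is simply no index $j\in[\delta]_0$ with $(j+s)\bmod d$ also in $[\delta]_0$. Setting $s=\omega-rK$, I would then verify that for each admissible $\omega$ exactly one $r\in\{0,1,\ldots,d/K-1\}$ places $(\omega-rK)\bmod d$ inside this window. When $0\leq\omega\leq\kappa-1$, the choice $r=0$ gives $s=\omega\in[0,\kappa-1]\subseteq[0,\delta-1]$, whereas for each $r\in\{1,\ldots,d/K-1\}$ the reduced residue equals $\omega+(d/K-r)K\in[K,d-\delta]$, which lies outside the window since $K=\delta-1+\kappa\geq\delta+1$. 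When $\delta\leq\omega\leq K-1$, the choice $r=1$ gives $(\omega-K)\bmod d\in[d-\kappa+1,d-1]\subseteq[d-\delta+1,d-1]$, while $r=0$ and $r\geq2$ both push the residue into $[\delta,d-\delta]$. Plugging the surviving $r$ into the second identity from the first step immediately produces the stated closed-form expression.

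\textbf{The main obstacle} is the bookkeeping for this modular analysis: the $d/K$ possible residues of $\omega-rK$ modulo $d$ are evenly spaced with gap $K$, and I need to show that the window $[0,\delta-1]\cup[d-\delta+1,d-1]$, of total length $2\delta-1<2K$, contains exactly one of them in each of the two $\omega$-ranges. The hypothesis $\kappa\geq2$ supplies a gap wide enough that two residues cannot simultaneously hit the window, while $\kappa\leq\delta$ keeps the second range $[\delta,K-1]$ nonempty. Everything else reduces to routine interval arithmetic, mirroring the proof of Lemma~\ref{thm:generalSumCollapse} line by line.
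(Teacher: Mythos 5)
Your proposal is correct and follows essentially the same route the paper intends: specialize Theorem~\ref{thm:subsamplingInFrequencyAndSpace} to $L=d$ so that only $\ell=0$ survives, then use $\mbox{supp}(\m)\subseteq[\delta]_0$ to show $F_d(\m\circ S_{\omega-rK}\overline{\m})$ vanishes unless $(\omega-rK)\bmod d$ lies within $\delta$ of zero, which singles out $r=0$ (resp.\ $r=1$) in the two $\omega$-ranges. This mirrors the paper's proof of Lemma~\ref{thm:generalSumCollapse} with the roles of space and frequency interchanged, exactly as the paper indicates when it says the proofs are nearly identical.
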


\begin{remark}
For convenience, in Lemmas \ref{thm:generalSumCollapse-2}, \ref{thm:generalSumCollapse}, and \ref{thm:generalSumCollapse-1} we have assumed that the support of $\widehat{\mathbf{m}}, \mathbf{m}$ or  $\widehat{\mathbf{x}},$ were contained in the first $\rho, \delta,$ or $\gamma$ entries. However, inspecting the proofs we see these results remain valid if these intervals are replaced with any other intervals of the same length.
\end{remark}

\section{Recovery Guarantees}
\label{sec:recGuar}

%

In this section, we will present two algorithms which allow us to reconstruct $\mathbf{x}$ from our matrix of  noisy measurements $Y_{K,L}$ and prove Theorems \ref{thm:Algorithm_2_guarantees} and \ref{thm:Algorithm_1_guarantees}, presented in the introduction, which guarantee that these algorithms converge. Before providing the proofs of these theorems, we will first state two propositions which show that it is possible to design masks in such a way that the mask dependent constants $\mu_1$ and $\mu_2$ are nonzero. For proofs of these propositions, please see the appendix.

%

\begin{proposition}
\label{prop:mu_condition} Let ${\bf m}\in\mathbb{C}^{d}$ be
bandlimited with $\mbox{supp}\left(\widehat{\mathbf{m}}\right)\subseteq\left[\rho\right]_{0}$, so that its Fourier transform
may be written as
\[
\widehat{{\bf m}}=\left(a_{0}\mathbbm{e}^{\mathbbm{i}\theta_{0}},\ldots,a_{\rho-1}\mathbbm{e}^{\mathbbm{i}\theta_{\rho-1}},0,\ldots,0\right)^{T}
\]
for some real numbers $a_{0},\dots,a_{\rho-1}$. As in \eqref{eq:mu}, let 
\begin{equation*}
\mu_1= \min_{_{\substack{\left|p\right|\leq\kappa-1\\
q\in\left[d\right]_{0}
}
}}\left|F_{d}\left(\widehat{\mathbf{m}}\circ S_{p}\overline{\widehat{\mathbf{m}}}\right){}_{q}\right|,
\end{equation*}
for some $2\leq\kappa\leq\rho$. If 
\begin{equation}
|a_{0}|>\left(\rho-1\right)|a_{1}|,\label{eqn: a0big}
\end{equation}
and 
\begin{equation}
|a_{1}|\geq|a_{2}|\geq\cdots\geq|a_{\rho-1}|>0,\label{eqn: decreasingamplitudes}
\end{equation}
then $\mu_1>0.$ 
\end{proposition}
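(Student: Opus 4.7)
The plan is to compute $F_d\bigl(\widehat{\mathbf{m}} \circ S_p \overline{\widehat{\mathbf{m}}}\bigr)_q$ directly as a short sum, isolate a single dominant term, and bound the tail by the triangle inequality. The hypotheses \eqref{eqn: a0big} and \eqref{eqn: decreasingamplitudes} are then precisely calibrated so that the dominant term strictly beats the tail for every admissible $p$ and every $q$.

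First I would exploit the bandlimit assumption $\mathrm{supp}(\widehat{\mathbf{m}}) \subseteq [\rho]_0$: since $\rho < d/2$ and $|p| \leq \kappa - 1 \leq \rho - 1$, no wraparound contributes, so $\widehat{\mathbf{m}} \circ S_p \overline{\widehat{\mathbf{m}}}$ is supported on $\{0, 1, \ldots, \rho-1-p\}$ when $p \geq 0$, and on $\{|p|, \ldots, \rho-1\}$ when $p < 0$. Writing $\widehat{m}_n = a_n \mathbbm{e}^{\mathbbm{i}\theta_n}$ and expanding the DFT gives, for $p \geq 0$,
\[
F_d(\widehat{\mathbf{m}} \circ S_p \overline{\widehat{\mathbf{m}}})_q = \sum_{n=0}^{\rho-1-p} a_n a_{n+p}\, \mathbbm{e}^{\mathbbm{i}(\theta_n - \theta_{n+p})}\, \mathbbm{e}^{-2\pi \mathbbm{i} n q / d}.
\]
I would then isolate the $n = 0$ term, whose magnitude is $|a_0 a_p|$, and bound the remaining $\rho - 1 - p$ terms. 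For each tail index $n \geq 1$, the monotonicity \eqref{eqn: decreasingamplitudes} yields $|a_n| \leq |a_1|$ and $|a_{n+p}| \leq |a_p|$ (the second inequality uses $n+p \geq p+1$ together with the non-increasing assumption on $|a_k|$ for $k \geq 1$; the degenerate case $p = 0$ is handled by the same estimate since then $|a_n|^2 \leq |a_1|^2 \leq |a_0||a_1| = |a_p||a_1|$, where $|a_0| \geq |a_1|$ follows from \eqref{eqn: a0big}). The reverse triangle inequality then gives
\[
\bigl|F_d(\widehat{\mathbf{m}} \circ S_p \overline{\widehat{\mathbf{m}}})_q\bigr| \;\geq\; |a_p|\bigl(|a_0| - (\rho - 1 - p)|a_1|\bigr).
\]
Hypothesis \eqref{eqn: a0big} makes the parenthetical strictly positive for every $0 \leq p \leq \kappa - 1$, while \eqref{eqn: decreasingamplitudes} gives $|a_p| \geq |a_{\rho - 1}| > 0$, so the right-hand side is strictly positive for every $q$.

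The case $-(\kappa - 1) \leq p < 0$ is entirely symmetric: writing $p = -p'$, the support shifts to $\{p', \ldots, \rho - 1\}$, the dominant term is now $n = p'$ with magnitude $|a_0 a_{p'}|$, and bounding the tail via $|a_n| \leq |a_{p'}|$ for $n \geq p' + 1$ and $|a_{n-p'}| \leq |a_1|$ for $n \geq p'+1$ gives the identical estimate with $|p|$ in place of $p$. Taking the minimum over $|p| \leq \kappa - 1$ and $q \in [d]_0$ then yields
\[
\mu_1 \;\geq\; \min_{0 \leq |p| \leq \kappa - 1} |a_{|p|}|\bigl(|a_0| - (\rho - 1)|a_1|\bigr) \;>\; 0.
\]

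The only delicate point — and really the sole ``obstacle'' — lies in the pairing choice for the tail: bounding $|a_n a_{n+p}| \leq |a_1||a_p|$ (rather than the cruder $|a_1|^2$) is what allows the factor $|a_p|$ to cancel from both sides of the final inequality, so that the threshold on $|a_0|$ comes out to be exactly $(\rho - 1)|a_1|$ as in \eqref{eqn: a0big} rather than the more stringent $|a_0| > (\rho - 1)|a_1|^2 / |a_p|$ that a lazier estimate would impose.
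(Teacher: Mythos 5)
Your proposal is correct and follows essentially the same route as the paper's proof: expand $F_d(\widehat{\mathbf{m}}\circ S_p\overline{\widehat{\mathbf{m}}})_q$ as a sum of at most $\rho-|p|$ terms, isolate the dominant term of magnitude $|a_0||a_{|p|}|$, bound each tail term by $|a_1||a_{|p|}|$ using \eqref{eqn: decreasingamplitudes}, and conclude via the reverse triangle inequality and \eqref{eqn: a0big}. The only cosmetic difference is that the paper bounds the tail by $(\rho-1)|a_1||a_{1+|p|}|$ rather than your $(\rho-1-|p|)|a_1||a_{|p|}|$; both suffice.
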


\begin{proposition}
\label{prop:mu_condition2} Let ${\bf m}\in\mathbb{C}^{d}$ be
a compactly supported mask with $\mbox{supp}\left(\mathbf{m}\right)\subseteq\left[\delta\right]_{0}$, given by
\[
{\bf m}=\left(a_{0}\mathbbm{e}^{\mathbbm{i}\theta_{0}},\ldots,a_{\delta-1}\mathbbm{e}^{\mathbbm{i}\theta_{\delta-1}},0,\ldots,0\right)^{T}
\]
for some real numbers $a_{0},\dots,a_{\delta-1}$. As in \eqref{eq:mu2}, let 
\begin{equation*}
\mu_2= \min_{_{\substack{\left|p\right|\leq\gamma-1\\
\left|q\right|\leq\delta-1
}
}}\left|F_{d}\left(\widehat{{\bf m}}\circ S_{p}\overline{\widehat{{\bf m}}}\right)_{q}\right|\label{eq:mu-2}
\end{equation*}
for some $1\leq\gamma\leq2\delta-1$. If 
\begin{equation}
|a_{0}|>\left(\delta-1\right)|a_{1}|,\label{eqn: a0big2}
\end{equation}
and 
\begin{equation}
|a_{1}|\geq|a_{2}|\geq\cdots\geq|a_{\delta-1}|>0,\label{eqn: decreasingamplitudes2}
\end{equation}
then $\mu_2>0.$ 
\end{proposition}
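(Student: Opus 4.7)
The plan is to use Lemma \ref{lem:fourierIndexSwap} to rewrite the quantity whose modulus we want to bound in terms of a DFT involving $\mathbf{m}$ directly, where the compact support hypothesis can actually be exploited. Specifically, applying that lemma with $\mathbf{x}=\mathbf{m}$, $\omega=q$, $\alpha=-p$ immediately yields
\[
\left|F_d(\widehat{\mathbf{m}}\circ S_p\overline{\widehat{\mathbf{m}}})_q\right|=d\cdot\left|F_d(\mathbf{m}\circ S_q\overline{\mathbf{m}})_{-p}\right|,
\]
so it suffices to bound the right-hand side from below uniformly in $|p|\leq\gamma-1$ and $|q|\leq\delta-1$. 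Note that the eventual estimate will not depend on $p$ at all, so the $\gamma$-range does not actually enter the argument.

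Because $\mathrm{supp}(\mathbf{m})\subseteq[\delta]_0$, for $q\in[0,\delta-1]$ the pointwise product $\mathbf{m}\circ S_q\overline{\mathbf{m}}$ is supported on indices $n\in[0,\delta-1-q]$, and its DFT at frequency $-p$ equals
\[
\sum_{n=0}^{\delta-1-q}m_n\overline{m_{n+q}}\mathbbm{e}^{2\pi\mathbbm{i}np/d};
\]
the analogous sum for $q\in[-(\delta-1),-1]$ runs from $n=-q$ to $\delta-1$. In each nonzero-$q$ case a single term of the sum contains $m_0$ or $\overline{m_0}$ and has modulus $|a_0 a_{|q|}|$, while in the $q=0$ case the sum is $\sum_{n=0}^{\delta-1}|a_n|^2\mathbbm{e}^{2\pi\mathbbm{i}np/d}$ with leading term $|a_0|^2$. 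I plan to isolate that distinguished term and apply the reverse triangle inequality. For $q\in[1,\delta-1]$, the monotonicity hypothesis \eqref{eqn: decreasingamplitudes2} furnishes both $|a_n|\leq|a_1|$ (for $n\geq 1$) and $|a_{n+q}|\leq|a_q|$ (for $n\geq 1$, $q\geq 1$), so the remaining $\delta-1-q$ terms contribute total modulus at most $(\delta-1-q)|a_1||a_q|$. This gives the lower bound
\[
\left|F_d(\mathbf{m}\circ S_q\overline{\mathbf{m}})_{-p}\right|\geq|a_q|\bigl(|a_0|-(\delta-1-q)|a_1|\bigr)\geq|a_{\delta-1}|\bigl(|a_0|-(\delta-1)|a_1|\bigr)>0
\]
by \eqref{eqn: a0big2}. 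The negative-$q$ case is symmetric, and for $q=0$ the remainder is bounded by $(\delta-1)|a_1|^2<|a_0|^2$ since \eqref{eqn: a0big2} implies $|a_0|^2>(\delta-1)^2|a_1|^2\geq(\delta-1)|a_1|^2$ whenever $\delta\geq 2$.

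Combining these three subcases produces $\mu_2\geq d\cdot|a_{\delta-1}|\bigl(|a_0|-(\delta-1)|a_1|\bigr)>0$ when $\delta\geq 2$, with the degenerate $\delta=1$ case being trivial since then $\mathbf{m}\circ S_0\overline{\mathbf{m}}=|a_0|^2\mathbf{e}_0$ has constant DFT $|a_0|^2>0$. The main obstacle, and the only genuinely delicate point, is matching the reverse-triangle-inequality estimate in the $q\neq 0$ case to the form of hypothesis \eqref{eqn: a0big2}: one must factor $|a_q|$ out of both the dominant term and the bound on the remainder, which requires invoking \emph{both} inequalities $|a_n|\leq|a_1|$ and $|a_{n+q}|\leq|a_q|$ from the monotonicity rather than the cruder $|a_n a_{n+q}|\leq|a_1|^2$, since the cruder bound would introduce a spurious factor of $|a_1|/|a_{\delta-1}|$ and force a correspondingly stronger (and non-matching) hypothesis on the ratio $|a_0|/|a_1|$.
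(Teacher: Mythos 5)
Your proof is correct and follows essentially the same route as the paper's: both use Lemma \ref{lem:fourierIndexSwap} to transfer the problem to the DFT of $\mathbf{m}\circ S_{\pm q}\overline{\mathbf{m}}$, where the compact support of $\mathbf{m}$ reduces the transform to a sum of $\delta-|q|$ terms, and then isolate the distinguished term of modulus $|a_0 a_{|q|}|$ and apply the reverse triangle inequality via \eqref{eqn: a0big2} and \eqref{eqn: decreasingamplitudes2}. The only differences are cosmetic: you treat the $q=0$ and $\delta=1$ cases explicitly, whereas the paper simply cites the identical estimate from the proof of Proposition \ref{prop:mu_condition}.
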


\textbf{}
\begin{algorithm}\label{firstalg}
\begin{raggedright}
\textbf{Inputs}
\par\end{raggedright}
\begin{enumerate}
\item \begin{raggedright}
$d\times L$ noisy measurement matrix $Y_{d,L}\in\mathbb{R}^{d\times L}$ with entries 

\begin{equation*}
\left(Y_{d,L}\right)_{k,\ell}=\left|\sum_{n=0}^{d-1}x_{n}m_{n-\ell\frac{L}{d}}\mathbbm{e}^{-\frac{2\pi \mathbbm{i}nk}{d}}\right|^{2}+\left(N_{d,L}\right)_{k,\ell},\ k\in[d]_0,\ell\in\frac{d}{L}\left[L\right]_{0}.
\end{equation*}
\par\end{raggedright}
\item \begin{raggedright}
Bandlimited mask  ${\bf m}\in\mathbb{C}^{d}$ with
$\mbox{supp}\left(\widehat{\mathbf{m}}\right)\subseteq\left[\rho\right]_{0}$ for some $\rho<\frac{d}{2}.$
\par\end{raggedright}
\end{enumerate}
\begin{raggedright}
\textbf{Steps}
\par\end{raggedright}
\begin{enumerate}
\item Let $\kappa=L-\rho+1,$ and for $1-\kappa\leq \alpha\leq \kappa-1$ estimate  $F_{d}\left({\bf \widehat{{\bf x}}}\circ S_{\alpha}\overline{\widehat{{\bf x}}}\right)$ by
\begin{equation*}
F_{d}\left({\bf {\bf \widehat{{\bf x}}}}\circ S_{\alpha}\overline{{\bf {\bf \widehat{{\bf x}}}}}\right)  \approx\begin{cases}\frac{d^{2}\left(F_dY_{d,L}F_L^T\right)_{-\alpha}}{L F_{d}\left({\bf \widehat{{\bf m}}}\circ S_{-\alpha}\overline{{\bf \widehat{{\bf m}}}}\right)}\ \ &\text{if }1-\kappa\leq \alpha\leq0\\
\frac{d^{2}\left(F_dY_{d,L}F_L^T\right)_{L-\alpha}}{L F_{d}\left({\bf \widehat{{\bf m}}}\circ S_{-\alpha}\overline{{\bf \widehat{{\bf m}}}}\right)}\ \ &\text{if } 1\leq\alpha\leq\kappa-1
\end{cases}.
\end{equation*}

\item Invert the Fourier transforms above to recover estimates of the $\left(2\kappa-1\right)$
vectors ${\bf {\bf \widehat{{\bf x}}}}\circ S_{\alpha}\overline{{\bf {\bf \widehat{{\bf x}}}}}$. 
\item Organize these vectors into a banded matrix, $C_{2\kappa-1}\left(Y_{2\kappa-1}\right)$ as described in \eqref{eqn:Y2kappa} (see also \eqref{eq:circulant_operator}).
\item Hermitianize the matrix above: $C_{2\kappa-1}\left(Y_{2\kappa-1}\right)\mapsfrom\frac{1}{2}\left(C_{2\kappa-1}\left(Y_{2\kappa-1}\right)+C_{2\kappa-1}\left(Y_{2\kappa-1}\right)^{*}\right)$.
\item Estimate $\left|\widehat{{\bf x}}\right|$ from the main diagonal of $C_{2\kappa-1}\left(Y_{2\kappa-1}\right)$.
\item Normalize $C_{2\kappa-1}\left(Y_{2\kappa-1}\right)$ componentwise
to form $\widetilde{Y}_{2\kappa-1}$. 
\item Compute $\mathbf{v_1}$ the leading normalized eigenvector of $\widetilde{Y}_{2\kappa-1}.$ 
\end{enumerate}
\begin{raggedright}
\textbf{Output}
\par\end{raggedright}
\begin{raggedright}
  ${\bf x}_e\coloneqq F_d^{-1}\widehat{{\bf x}}_{e},$ an estimate of ${\bf x},$ where  $\widehat{{\bf x}}_e$
is given componentwise by
\[
\left(\widehat{{\bf x}}_{e}\right)_{j}\coloneqq \sqrt{\left(C_{2\kappa-1}\left(Y_{2\kappa-1}\right)\right)_{j,j}}\left(\mathbf{v_1}\right)_{j}.
\]
\par\end{raggedright}
\raggedright{}\textbf{\caption{\textbf{Wigner Deconvolution and Angular Synchronization for Bandlimited
Masks}}
}
\end{algorithm}

We will now prove Theorem \ref{thm:Algorithm_1_guarantees}, which we restate below for the  convenience of the reader.
\Algone*

\begin{proof}[The Proof of Theorem \ref{thm:Algorithm_1_guarantees}]
Let ${\bf x},{\bf m}\in\mathbb{C}^{d},$  
$\mbox{supp}\left({\bf \widehat{{\bf m}}}\right)=\left[\rho\right]_{0},$ and let $\kappa=L-\rho+1.$ Then by Lemma \ref{thm:generalSumCollapse}, %
 if $0\leq \beta\leq \kappa-1,$ then
\begin{align*}
F_{d}\left({\bf {\bf \widehat{{\bf x}}}}\circ S_{-\beta}\overline{{\bf {\bf \widehat{{\bf x}}}}}\right) & =\frac{d^{2}}{L}\frac{\left(F_{d}Y_{d,L}F_{L}^{T}\right)_{\beta}}{F_{d}\left({\bf \widehat{{\bf m}}}\circ S_{\beta}\overline{{\bf \widehat{{\bf m}}}}\right)}-\frac{d^{2}}{L}\frac{\left(F_{d}N_{d,L}F_{L}^{T}\right)_{\beta}}{F_{d}\left({\bf \widehat{{\bf m}}}\circ S_{\beta}\overline{{\bf \widehat{{\bf m}}}}\right)},\end{align*}
and therefore
\begin{align*}
{\bf {\bf \widehat{{\bf x}}}}\circ S_{-\beta}\overline{{\bf {\bf \widehat{{\bf x}}}}} & +\frac{d^{2}}{L}F_{d}^{-1}\left(\frac{\left(F_{d}N_{d,L}F_{L}^{T}\right)_{\beta}}{F_{d}\left({\bf \widehat{{\bf m}}}\circ S_{\beta}\overline{{\bf \widehat{{\bf m}}}}\right)}\right)=\frac{d^{2}}{L}F_{d}^{-1}\left(\frac{\left(F_{d}Y_{d,L}F_{L}^{T}\right)_{\beta}}{F_{d}\left({\bf \widehat{{\bf m}}}\circ S_{\beta}\overline{{\bf \widehat{{\bf m}}}}\right)}\right).\ 
\end{align*}
Substituting $\alpha=-\beta,$ we see
\begin{align}\label{eqn:diag1}
{\bf {\bf \widehat{{\bf x}}}}\circ S_{\alpha}\overline{{\bf {\bf \widehat{{\bf x}}}}} & +\frac{d^{2}}{L}F_{d}^{-1}\left(\frac{\left(F_{d}N_{d,L}F_{L}^{T}\right)_{-\alpha}}{F_{d}\left({\bf \widehat{{\bf m}}}\circ S_{-\alpha}\overline{{\bf \widehat{{\bf m}}}}\right)}\right)=\frac{d^{2}}{L}F_{d}^{-1}\left(\frac{\left(F_{d}Y_{d,L}F_{L}^{T}\right)_{-\alpha}}{F_{d}\left({\bf \widehat{{\bf m}}}\circ S_{-\alpha}\overline{{\bf \widehat{{\bf m}}}}\right)}\right)
\end{align} for all $1-\kappa\leq \alpha\leq 0.$
Likewise, for $\rho\leq \beta\leq L-1,$
\begin{align*}
{\bf {\bf \widehat{{\bf x}}}}\circ S_{L-\beta}\overline{{\bf {\bf \widehat{{\bf x}}}}} & +\frac{d^{2}}{L}F_{d}^{-1}\left(\frac{\left(F_{d}N_{d,L}F_{L}^{T}\right)_{\beta}}{F_{d}\left({\bf \widehat{{\bf m}}}\circ S_{\beta-L}\overline{{\bf \widehat{{\bf m}}}}\right)}\right)=\frac{d^{2}}{L}F_{d}^{-1}\left(\frac{\left(F_{d}Y_{d,L}F_{L}^{T}\right)_{\beta}}{F_{d}\left({\bf \widehat{{\bf m}}}\circ S_{\beta-L}\overline{{\bf \widehat{{\bf m}}}}\right)}\right), 
\end{align*}
so, since $L=\rho+\kappa-1,$ substituting $\alpha=L-\beta$ implies
\begin{align}\label{eqn:diag2}
{\bf {\bf \widehat{{\bf x}}}}\circ S_{\alpha}\overline{{\bf {\bf \widehat{{\bf x}}}}} & +\frac{d^{2}}{L}F_{d}^{-1}\left(\frac{\left(F_{d}N_{d,L}F_{L}^{T}\right)_{L-\alpha}}{F_{d}\left({\bf \widehat{{\bf m}}}\circ S_{-\alpha}\overline{{\bf \widehat{{\bf m}}}}\right)}\right)=\frac{d^{2}}{L}F_{d}^{-1}\left(\frac{\left(F_{d}Y_{d,L}F_{L}^{T}\right)_{L-\alpha}}{F_{d}\left({\bf \widehat{{\bf m}}}\circ S_{-\alpha}\overline{{\bf \widehat{{\bf m}}}}\right)}\right) 
\end{align}
for all $1\leq \alpha\leq \kappa-1.$

In order to write the equations above in a compact form, we will construct three $d\times 2\kappa-1$ matrices, $X_{2\kappa-1}, N_{2\kappa-1}$, and $Y_{2\kappa-1}.$ As in Section \ref{Sec:BasicProps}, for notatational convenience, we will index the columns of these matrices from $-\kappa+1$ to $\kappa-1$ so that column zero is the middle column.  For $-\kappa+1\leq \alpha\leq \kappa-1,$ we let the $\alpha$-th column of $X_{2\kappa-1}$  be the diagonal band of $\widehat{\mathbf{x}}\widehat{\mathbf{x}}^*$ which is $\alpha$ terms off of the main diagonal, i.e.
\begin{equation*}
(X_{2\kappa-1})_{\alpha} = \widehat{\mathbf{x}}\circ S_\alpha\overline{\widehat{\mathbf{x}}},
\end{equation*} 
and we define the columns of $N_{2\kappa-1}$ and $Y_{2\kappa-1}$ by
\begin{equation*}
(N_{2\kappa-1})_{\alpha}=\begin{cases}
&\frac{d^{2}}{L}F_{d}^{-1}\left(\frac{\left(F_{d}N_{d,L}F_{L}^{T}\right)_{-\alpha}}{F_{d}\left({\bf \widehat{{\bf m}}}\circ S_{-\alpha}\overline{{\bf \widehat{{\bf m}}}}\right)}\right) \:\:\: \text{if }-\kappa+1\leq \alpha\leq 0\\
&\frac{d^{2}}{L}F_{d}^{-1}\left(\frac{\left(F_{d}N_{d,L}F_{L}^{T}\right)_{L-\alpha}}{F_{d}\left({\bf \widehat{{\bf m}}}\circ S_{-\alpha}\overline{{\bf \widehat{{\bf m}}}}\right)}\right) \: \text{if } 1\leq \alpha\leq \kappa-1
\end{cases},
\end{equation*}
and
\begin{equation}\label{eqn:Y2kappa}
(Y_{2\kappa-1})_{\alpha}=\begin{cases}
&\frac{d^{2}}{L}F_{d}^{-1}\left(\frac{\left(F_{d}Y_{d,L}F_{L}^{T}\right)_{-\alpha}}{F_{d}\left({\bf \widehat{{\bf m}}}\circ S_{-\alpha}\overline{{\bf \widehat{{\bf m}}}}\right)}\right) \: \text{if }-\kappa+1\leq \alpha\leq 0\\
&\frac{d^{2}}{L}F_{d}^{-1}\left(\frac{\left(F_{d}Y_{d,L}F_{L}^{T}\right)_{L-\alpha}}{F_{d}\left({\bf \widehat{{\bf m}}}\circ S_{-\alpha}\overline{{\bf \widehat{{\bf m}}}}\right)}\right) \:\:\: \text{if } 1\leq \alpha\leq \kappa-1
\end{cases}.
\end{equation}
By construction, \eqref{eqn:diag1} and \eqref{eqn:diag2} imply
\begin{equation}
Y_{2\kappa-1}=X_{2\kappa-1}+N_{2\kappa-1}.\label{eq:noncirculant_meas}
\end{equation}
Using the fact $\frac{1}{\sqrt{d}}F_d$ is unitary,
we see 
\begin{align}
 \left\Vert N_{2\kappa-1}\right\Vert _{F}^2& \leq\frac{d^{4}}{L^2}\cdot\frac{1}{d}\frac{\left\Vert F_{d}N_{d,L}F_{L}^{T}\right\Vert _{F}^2}{{\displaystyle \mu_1^2}}\nonumber\\
 & \leq\frac{d^{4}}{L\mu_1^2}\left\Vert N_{d,L}\right\Vert _{F}^2, \label{eq:N_2kappa-1_bound}
\end{align}
where $\mu_1$ is as in (\ref{eq:mu}). 
%
Let $H:\mathbb{C}^{d\times d}\rightarrow\mathbb{C}^{d\times d}$ be the Hermitianizing operator
\begin{equation}\label{eqn:herm}
H(M)=\frac{M+M^*}{2},
\end{equation}
and note that $\|H(M)\|_F\leq \|M\|_F.$ Since operator $C_{2\kappa-1}$ defined in (\ref{eq:circulant_operator}) is linear and $C_{2\kappa-1}\left(X_{2\kappa-1}\right)$ is Hermitian, (\ref{eq:noncirculant_meas}) 
implies
\[
C_{2\kappa-1}\left(X_{2\kappa-1}\right)=H\left(C_{2\kappa-1}\left(Y_{2\kappa-1}\right)\right)-H\left(C_{2\kappa-1}\left(N_{2\kappa-1}\right)\right).
\]
Let $\mbox{sgn}:\mathbb{C}\to\mathbb{C}$ be the signum function,
\[
\mbox{sgn}\left(z\right)\coloneqq\begin{cases}
\frac{z}{\left|z\right|}, & \text{if }z\neq0\\
\text{if }1, & z=0
\end{cases},
\]
and let $\widetilde{X}_{2\kappa-1}$ and $\widetilde{Y}_{2\kappa-1}$,
be the (componentwise) normalized
versions of $C_{2\kappa-1}\left(X_{2\kappa-1}\right)$ and $H\left(C_{2\kappa-1}\left(Y_{2\kappa-1}\right)\right)$,
respectively, i.e.,
\begin{equation*}
\widetilde{X}_{2\kappa-1}  \coloneqq \mbox{sgn}\left(C_{2\kappa-1}\left(\left(X_{2\kappa-1}\right)\right)\right),\quad
\text{and}\quad\widetilde{Y}_{2\kappa-1}  \coloneqq \mbox{sgn}\left(HC_{2\kappa-1}\left(\left(Y_{2\kappa-1}\right)\right)\right),
\end{equation*}
and note that 
\begin{equation}\label{eqn:samesign}
\left(\widetilde{Y}_{2\kappa-1}\right)_{j,k}=
\mbox{sgn}\left(\left(H\left(C_{2\kappa-1}\left(Y_{2\kappa-1}\right)\right)\right)_{j,k}\right)= \mbox{sgn}\left(\frac{\left(H\left(\left(C_{2\kappa-1}\left(Y_{2\kappa-1}\right)\right)\right)\right)_{j,k}}{\left|\left(C_{2\kappa-1}\left(X_{2\kappa-1}\right)\right)_{j,k}\right|}\right).
\end{equation}
For all $j$ and $k,$ we have that 
\begin{equation*}
\frac{1}{\left|\left(C_{2\kappa-1}\left(X_{2\kappa-1}\right)\right)_{j,k}\right|}\leq\frac{1}{\min|\widehat{\x}|^2}.
\end{equation*}
Therefore, 
we  can apply \eqref{eqn:samesign} and the fact that for all $z_1, z_2\in\mathbb{C},$ 
\begin{equation*}
\left|\frac{z_2}{|z_1|}-\mbox{sgn}\left(\frac{z_2}{|z_1|}\right)\right|=\left|\frac{z_2}{|z_1|}-\mbox{sgn}\left(z_2\right)\right|\leq \left|\mbox{sgn}(z_1)-\frac{z_2}{|z_1|}\right|
\end{equation*}
to see
\begin{align}
&\left|\left(\widetilde{X}_{2\kappa-1}\right)_{j,k}-\left(\widetilde{Y}_{2\kappa-1}\right)_{j,k}\right|\nonumber\\
  =&\left|\left(\widetilde{X}_{2\kappa-1}\right)_{j,k}-\mbox{sgn}\left(\frac{\left(H\left(C_{2\kappa-1}\left(Y_{2\kappa-1}\right)\right)\right)_{j,k}}{\left|\left(C_{2\kappa-1}\left(X_{2\kappa-1}\right)\right)_{j,k}\right|}\right)\right|\nonumber\\
  \leq&\left|\left(\widetilde{X}_{2\kappa-1}\right)_{j,k}-\frac{\left(H\left(C_{2\kappa-1}\left(Y_{2\kappa-1}\right)\right)\right)_{j,k}}{\left|\left(C_{2\kappa-1}\left(X_{2\kappa-1}\right)\right)_{j,k}\right|}\right|+\left|\frac{\left(H\left(C_{2\kappa-1}\left(Y_{2\kappa-1}\right)\right)\right)_{j,k}}{\left|\left(C_{2\kappa-1}\left(X_{2\kappa-1}\right)\right)_{j,k}\right|}-\mbox{sgn}\left(\frac{\left(H\left(C_{2\kappa-1}\left(Y_{2\kappa-1}\right)\right)\right)_{j,k}}{\left|\left(C_{2\kappa-1}\left(X_{2\kappa-1}\right)\right)_{j,k}\right|}\right)\right|\nonumber\\
  \leq&2\left|\left(\widetilde{X}_{2\kappa-1}\right)_{j,k}-\frac{\left(H\left(C_{2\kappa-1}\left(Y_{2\kappa-1}\right)\right)\right)_{j,k}}{\left|\left(C_{2\kappa-1}\left(X_{2\kappa-1}\right)\right)_{j,k}\right|}\right|\nonumber\\
  =&2\frac{\left|\left(H\left(C_{2\kappa-1}\left(N_{2\kappa-1}\right)\right)\right)_{j,k}\right|}{\left|\left(C_{2\kappa-1}\left(X_{2\kappa-1}\right)\right)_{j,k}\right|}\nonumber\\
  \leq&\frac{2}{\min|\widehat{\x}|^2}\left|\left(H\left(C_{2\kappa-1}\left(N_{2\kappa-1}\right)\right)\right)_{j,k}\right|\label{eqn:useSeps},
\end{align}
for all $j,k\in[d]_0.$
Thus,  by
 (\ref{eq:N_2kappa-1_bound}) and the fact that  $\left\Vert \widetilde{X}_{2\kappa-1}\right\Vert _{F}=\sqrt{\left(2\kappa-1\right)d},$
we see that 
\begin{align*}
\left\Vert \widetilde{Y}_{2\kappa-1}-\widetilde{X}_{2\kappa-1}\right\Vert _{F} & \leq\frac{2}{\min|\widehat{\x}|^2}\|H\left(C_{2\kappa-1}\left(N_{2\kappa-1}\right)\right)\|_F\\
 & \leq\frac{2}{\min|\widehat{\x}|^2}\left\Vert N_{2\kappa-1}\right\Vert _{F}\\
 & \leq\frac{2}{\mu_1\min|\widehat{\x}|^2}\frac{d^{2}}{\sqrt{L}}\left\Vert N_{d,L}\right\Vert _{F}\\
&\leq C\frac{d^{3/2}\|N_{d,L}\|_F}{\mu_1\min|\widehat{\x}|^2\sqrt{\kappa L}}\|X_{2\kappa-1}\|_F.
\end{align*}
 Therefore, by Corollary 2 of \cite{iwen2018phase},
we have
\begin{align}
\min_{\phi\in\left[0,2\pi\right]}\left\Vert \mbox{sgn}\left({\widehat{\x}}\right)-\mathbbm{e}^{\mathbbm{i}\phi}\mbox{sgn}\left(\mathbf{v_1}\right)\right\Vert _{2}
&\leq C\frac{d^{3/2}\|N_{d,L}\|_F}{\mu_1\min|\widehat{\x}|^2\sqrt{\kappa L}}\frac{d^{\frac{5}{2}}}{\kappa^{2}}\nonumber\\
&=C\frac{d^{4}\|N_{d,L}\|_F}{L^{1/2}\mu_1\kappa^{5/2}\min|\widehat{\x}|^2},\label{eq:bound_true_minus_recovered_phases}
\end{align}
where $\frac{{\bf \widehat{x}}}{\left|\widehat{{\bf x}}\right|}$
is the vector of true phases of $\widehat{{\bf x}}$, and $\mathbf{v_1}$
is the lead eigenvector of $\widetilde{Y}_{2\kappa-1}$.

As in Algorithm 1, define $\widehat{{\bf x}}_{e}$ by
\[
\left(\widehat{{\bf x}}_{e}\right)_{j}=\sqrt{\left(C_{2\kappa-1}\left(Y_{2\kappa-1}\right)\right)_{j,j}}\cdot\mbox{sgn}\left(\mathbf{v_1}\right)_{j}
\]
for $j\in[d]_0.$ Lemma 3 of \cite{iwen2016fast} implies that
\begin{equation*}
\big\Vert \left|\widehat{{\bf x}}\right|-\left|\widehat{{\bf x}}_{e}\right|\big\Vert _{\infty}^2 \leq C \|N_{2\kappa-1}\|_\infty,
 \end{equation*}
and so
\begin{equation*}
\big\Vert \left|\widehat{{\bf x}}\right|-\left|\widehat{{\bf x}}_{e}\right|\big\Vert _{2} \leq C \sqrt{d\|N_{2\kappa-1}\|_\infty} \leq C \sqrt{d\|N_{2\kappa-1}\|_F}.
 \end{equation*}
Therefore, by  \eqref{eq:N_2kappa-1_bound}, we see 
\begin{align*}
\min_{\phi\in\left[0,2\pi\right]}\left\Vert \widehat{{\bf x}}-\mathbbm{e}^{\mathbbm{i}\phi}\widehat{{\bf x}}_{e}\right\Vert _{2} & =\min_{\phi\in\left[0,2\pi\right]}\left\Vert \left|\widehat{{\bf x}}\right|\circ\mbox{sgn}\left(\widehat{\x}\right)-\left|\widehat{{\bf x}}_{e}\right|\circ \mathbbm{e}^{\mathbbm{i}\phi}\mbox{sgn}\left(\widehat{\x}_e\right)\right\Vert _{2}\\
 & \leq\min_{\phi\in\left[0,2\pi\right]}\left(\left\Vert \left|\widehat{{\bf x}}\right|\circ\mbox{sgn}\left({\widehat{\x}}\right)-\left|\widehat{{\bf x}}\right|\circ \mathbbm{e}^{\mathbbm{i}\phi}\mbox{sgn}\left(\widehat{\x}_e\right)\right\Vert _{2}+\left\Vert \left|\widehat{{\bf x}}\right|\circ \mathbbm{e}^{\mathbbm{i}\phi}\mbox{sgn}\left(\widehat{\x}_e\right)-\left|\widehat{{\bf x}}_{e}\right|\circ \mathbbm{e}^{\mathbbm{i}\phi}\mbox{sgn}\left(\widehat{\x}_e\right)
\right\Vert _{2}\right).\\
& =\min_{\phi\in\left[0,2\pi\right]}\left(\left\Vert \left|\widehat{{\bf x}}\right|\circ\mbox{sgn}\left(\widehat{\x}\right)-\left|\widehat{{\bf x}}\right|\circ \mathbbm{e}^{\mathbbm{i}\phi}\mbox{sgn}\left(\widehat{\x}_e\right)\right\Vert _{2}\right)+\big\Vert \left|\widehat{{\bf x}}\right|-\left|\widehat{{\bf x}}_{e}\right|\big\Vert _{2}\\
&\leq\left\Vert \widehat{{\bf x}}\right\Vert _{\infty}\left(\min_{\phi\in\left[0,2\pi\right]}\left\Vert \mbox{sgn}\left(\widehat{\x}\right)-\mathbbm{e}^{\mathbbm{i}\phi}\mbox{sgn}\left(\widehat{\x}_e\right)\right\Vert _{2}\right)+C\sqrt{d \left\Vert N_{2\kappa-1}\right\Vert _{F}}\\
&\leq\left\Vert \widehat{{\bf x}}\right\Vert _{\infty}\left(\min_{\phi\in\left[0,2\pi\right]}\left\Vert \mbox{sgn}\left(\widehat{\x}\right)-\mathbbm{e}^{\mathbbm{i}\phi}\mbox{sgn}\left(\widehat{\x}_e\right)\right\Vert _{2}\right)+C\sqrt{\frac{d^{3}}{\sqrt{L}\mu_1}\left\Vert N_{d,L}\right\Vert _{F}}.
\end{align*}
Together with (\ref{eq:bound_true_minus_recovered_phases})
this yields
\begin{equation*}
\min_{\phi\in\left[0,2\pi\right]}\left\Vert \widehat{{\bf x}}-\mathbbm{e}^{\mathbbm{i}\phi}\widehat{{\bf x}}_{e}\right\Vert _{2}\leq C\frac{d^{4}\left\Vert \widehat{{\bf x}}\right\Vert _{\infty}\left\Vert N_{d,L}\right\Vert _{F}}{L^{\frac{1}{2}}\mu_1\kappa^{\frac{5}{2}}\min\left|\widehat{{\bf x}}\right|^{2}}+C'\frac{d^{3/2}}{L^{1/4}}\sqrt{\frac{\left\Vert N_{d,L}\right\Vert _{F}}{\mu_1}.}
\end{equation*}
\eqref{equ:NewErrorCDP} now follows from the fact that $\|\widehat{\mathbf{x}}\|_2=\sqrt{d}\|\x\|_2$  for all $\mathbf{x}\in\mathbb{C}^d.$ 
\end{proof}
Theorem \ref{thm:Algorithm_2_guarantees}, restated below, provides recovery guarantees for Algorithm 2 under the assumptions  that $\mathbf{m}$ is compactly supported in space and that $\mathbf{x}$ is bandlimited. The proof is somewhat similar to the proof of Theorem \ref{thm:Algorithm_1_guarantees} but uses Lemma \ref{thm:generalSumCollapse-2} in place of Lemma \ref{thm:generalSumCollapse} and uses the Lemma 8 of \cite{iwen2018phase} during the angular synchronization step. 

\textbf{}
\begin{algorithm}[htbp] \label{alg:secondalg}
\begin{raggedright}
\textbf{Inputs}
\par\end{raggedright}
\begin{enumerate}
\item $K\times L$ noisy measurement matrix, $Y_{K,L}\in\mathbb{R}^{K\times L},$ with entries 
\begin{equation*}
\left(Y_{K,L}\right)_{k,\ell}=\left|\sum_{n=0}^{d-1}x_{n}m_{n-\ell\frac{L}{d}}\mathbbm{e}^{-\frac{2\pi \mathbbm{i}nkK}{d^2}}\right|^{2}+\left(N_{K,L}\right)_{k,\ell},\ k\in\frac{d}{K}\left[K\right]_{0}, \ell\in\frac{d}{L}\left[L\right]_{0}.
\end{equation*}
\item Compactly supported mask  ${\bf m}\in\mathbb{C}^{d}$.
\item Integers $\delta$ and $\gamma$, such that $\mbox{supp}\left({\bf {\bf m}}\right)\subseteq\left[\delta\right]_{0},$  $\mbox{supp}\left({\bf \widehat{{\bf x}}}\right)\subseteq\left[\gamma\right]_{0},$ and $\gamma\leq 2\delta-1<d.$
\end{enumerate}
\begin{raggedright}
\textbf{Steps}
\par\end{raggedright}
\begin{enumerate}
\item Ensure $L=2\gamma-1$ and $K=2\delta-1$. 
\item Estimate $\left(F_{d}\left({\bf \widehat{{\bf x}}}\circ S_{\alpha}\overline{\widehat{{\bf x}}}\right)\right)_{\beta}$
 for $\left|\sigma\right|\leq\gamma-1$ and $\left|\beta\right|\leq\delta-1$
by
\begin{equation*}
\left(F_{d}\left(\widehat{{\bf x}}\circ S_{\alpha}\overline{\widehat{{\bf x}}}\right)\right)_{\omega}\approx
\begin{cases}
\frac{d^{3}}{KL}\frac{\left(F_{L}Y_{K,L}^{T}F_{K}^{T}\right)_{-\alpha,\omega}}{\left(F_{d}\left(\widehat{{\bf m}}\circ S_{-\alpha}\overline{\widehat{{\bf m}}}\right)\right)_{\omega}} & \text{if } 1-\gamma\leq\alpha\leq0 \text{ and }1-\delta\leq\omega\leq -1 \\
\frac{d^{3}}{KL}\frac{\left(F_{L}Y_{K,L}^{T}F_{K}^{T}\right)_{-\alpha,\omega+K}}{\left(F_{d}\left(\widehat{{\bf m}}\circ S_{-\alpha}\overline{\widehat{{\bf m}}}\right)\right)_{\omega}} & \text{if } 1-\gamma\leq\alpha\leq 0\text{ and }0\leq\omega\leq\delta-1 \\
\frac{d^{3}}{KL}\frac{\left(F_{L}Y_{K,L}^{T}F_{K}^{T}\right)_{L-\alpha,\omega}}{\left(F_{d}\left(\widehat{{\bf m}}\circ S_{-\alpha}\overline{\widehat{{\bf m}}}\right)\right)_{\omega}} & \text{if }  1\leq\alpha\leq\gamma-1 \text{ and }1-\delta\leq\omega\leq -1\\
\frac{d^{3}}{KL}\frac{\left(F_{L}Y_{K,L}^{T}F_{K}^{T}\right)_{L-\alpha,\omega+K}}{\left(F_{d}\left(\widehat{{\bf m}}\circ S_{-\alpha}\overline{\widehat{{\bf m}}}\right)\right)_{\omega}} & \text{if }1\leq\alpha\leq\gamma-1 \text{ and }0\leq\omega\leq\delta-1 
\end{cases}.
\end{equation*}

\item Organize the $\left(2\delta-1\right)\cdot\left(2\gamma-1\right)$ values
of $\left(F_{d}\left({\bf \widehat{{\bf x}}}\circ S_{\sigma}\overline{\widehat{{\bf x}}}\right)\right)_{\beta}$
for $\left|\sigma\right|\leq\delta-1$ and $\left|\beta\right|\leq\gamma-1$
in a matrix $V\in\mathbb{C}^{\left(2\delta-1\right)\times\left(2\gamma-1\right)}$ as specified in \eqref{eq:Vdef}.
\item Estimate $A\approx W^{\dagger}V\in\mathbb{C}^{\gamma\times\left(2\gamma-1\right)}$,
where
\begin{align*}
W_{j,k} & \coloneqq\mathbbm{e}^{-\frac{2\pi \mathbbm{i}\left(j-\delta+1\right)k}{d}},\ \text{for }j\in\left[2\delta-1\right]_{0}, k\in\left[\gamma\right]_{0},\nonumber \\
W^{\dagger} & \coloneqq\left(W^{*}W\right)^{-1}W^{*}\in\mathbb{C}^{\gamma\times\left(2\delta-1\right)},\nonumber \\
A & \coloneqq\left[\begin{array}{ccccccc}
0 & \cdots & 0 & \left|\widehat{x}_{0}\right|^{2} & \widehat{x}_{0}\overline{\widehat{x}_{1}} & \cdots & \widehat{x}_{0}\overline{\widehat{x}_{\gamma-1}}\\
0 & \cdots & \widehat{x}_{1}\overline{\widehat{x}_{0}} & \left|\widehat{x}_{1}\right|^{2} & \widehat{x}_{1}\overline{\widehat{x}_{2}} & \cdots & 0\\
\vdots & \vdots & \vdots & \vdots & \vdots & \vdots & \vdots\\
0 & \cdots & \widehat{x}_{\gamma-2}\overline{\widehat{x}_{\gamma-3}} & \left|\widehat{x}_{\gamma-2}\right|^{2} & \widehat{x}_{\gamma-2}\overline{\widehat{x}_{\gamma-1}} & \cdots & 0\\
\widehat{x}_{\gamma-1}\overline{\widehat{x}_{0}} & \cdots & \widehat{x}_{\gamma-1}\overline{\widehat{x}_{\gamma-2}} & \left|\widehat{x}_{\gamma-1}\right|^{2} & 0 & \cdots & 0
\end{array}\right].
\end{align*}
\item Reshape $W^{\dagger}V$, into an estimate $G\in\mathbb{C}^{\gamma\times\gamma}$
of the rank-one matrix 
\[
\left.\widehat{{\bf x}}\right|{}_{\left[\gamma\right]_{0}}\left.\widehat{{\bf x}}^{*}\right|{}_{\left[\gamma\right]_{0}}=\left[\begin{array}{ccccc}
\left|\widehat{x}_{0}\right|^{2} & \widehat{x}_{0}\overline{\widehat{x}_{1}} & \widehat{x}_{0}\overline{\widehat{x}_{2}} & \cdots & \widehat{x}_{0}\overline{\widehat{x}_{\gamma-1}}\\
\widehat{x}_{1}\overline{\widehat{x}_{0}} & \left|\widehat{x}_{1}\right|^{2} & \widehat{x}_{1}\overline{\widehat{x}_{2}} & \cdots & \widehat{x}_{1}\overline{\widehat{x}_{\gamma-1}}\\
\vdots & \vdots & \ddots & \vdots & \vdots\\
\widehat{x}_{\gamma-1}\overline{\widehat{x}_{0}} & \widehat{x}_{\gamma-1}\overline{\widehat{x}_{1}} & \widehat{x}_{\gamma-1}\overline{\widehat{x}_{2}} & \cdots & \left|\widehat{x}_{\gamma-1}\right|^{2}
\end{array}\right].
\]
\item Hermitianize the matrix $G$ above: $G\mapsfrom\frac{1}{2}\left(G+G^{*}\right)$.
\item Compute $\lambda_{1},$ the largest  eigenvalue of $G$,
and ${\bf v}_{1}$, its associated normalized eigenvector.
\end{enumerate}
\begin{raggedright}
\textbf{Output}
\par\end{raggedright}
\begin{raggedright}
${\bf x}_e\coloneqq F_d^{-1}\widehat{{\bf x}}_{e},$ an estimate of ${\bf x},$  where $\widehat{{\bf x}}_e$
is given componentwise by
\[
\left(\widehat{{\bf x}}_{e}\right)_{j}=\begin{cases}
\sqrt{\left|\lambda_{1}\right|}\left({\bf v}_{1}\right)_{j}, & j\in\left[\gamma\right]_{0},\\
0, & \text{otherwise}.
\end{cases}
\]
\par\end{raggedright}
\raggedright{}\textbf{\caption{\textbf{Wigner Deconvolution and Angular Synchronization for Bandlimited
Signals}}
}
\end{algorithm}

\Algtwo*

\begin{proof}[The Proof of Theorem \ref{thm:Algorithm_2_guarantees}]Analogously to the proof of Theorem \ref{thm:Algorithm_1_guarantees}, we apply Lemma \ref{thm:generalSumCollapse-2} with $\xi=\gamma$ and $\kappa=\delta$ to the cases where  $\gamma\leq \beta\leq L-1,$ $0\leq \beta\leq \gamma-1,$  $\delta\leq\nu\leq K-1,$ and $0\leq \nu\leq \delta-1,$ and then substitute,    $\alpha=L-\beta,$ $\alpha=-\beta,$ $\omega=\nu-K,$ and $\omega=\nu,$ to see that 

\begin{enumerate}
\item if $ 1-\gamma\leq\alpha\leq0\text{ and }1-\delta\leq\omega\leq-1$,
then
\[
\left(F_{d}\left(\widehat{{\bf x}}\circ S_{\alpha}\overline{\widehat{{\bf x}}}\right)\right)_{\omega}+\frac{d^{3}}{KL}\frac{\left(F_{L}N_{K,L}^{T}F_{K}^{T}\right)_{-\alpha,\omega+K}}{\left(F_{d}\left(\widehat{{\bf m}}\circ S_{-\alpha}\overline{\widehat{{\bf m}}}\right)\right)_{\omega}}=\frac{d^{3}}{KL}\frac{\left(F_{L}Y_{K,L}^{T}F_{K}^{T}\right)_{-\alpha,\omega+K}}{\left(F_{d}\left(\widehat{{\bf m}}\circ S_{-\alpha}\overline{\widehat{{\bf m}}}\right)\right)_{\omega}},
\]
\item if $1-\gamma\leq\alpha\leq0\text{ and }0\leq\omega\leq\delta-1$,
then
\[
\left(F_{d}\left(\widehat{{\bf x}}\circ S_{\alpha}\overline{\widehat{{\bf x}}}\right)\right)_{\omega}+\frac{d^{3}}{KL}\frac{\left(F_{L}N_{K,L}^{T}F_{K}^{T}\right)_{-\alpha,\omega}}{\left(F_{d}\left(\widehat{{\bf m}}\circ S_{-\alpha}\overline{\widehat{{\bf m}}}\right)\right)_{\omega}}=\frac{d^{3}}{KL}\frac{\left(F_{L}Y_{K,L}^{T}F_{K}^{T}\right)_{-\alpha,\omega}}{\left(F_{d}\left(\widehat{{\bf m}}\circ S_{-\alpha}\overline{\widehat{{\bf m}}}\right)\right)_{\omega}},
\]
\item if $ 1\leq\alpha\leq\gamma-1 \text{ and }1-\delta\leq\omega\leq-1$,
then
\[
\left(F_{d}\left(\widehat{{\bf x}}\circ S_{\alpha}\overline{\widehat{{\bf x}}}\right)\right)_{\omega}+\frac{d^{3}}{KL}\frac{\left(F_{L}N_{K,L}^{T}F_{K}^{T}\right)_{L-\alpha,\omega+K}}{\left(F_{d}\left(\widehat{{\bf m}}\circ S_{-\alpha}\overline{\widehat{{\bf m}}}\right)\right)_{\omega}}=\frac{d^{3}}{KL}\frac{\left(F_{L}Y_{K,L}^{T}F_{K}^{T}\right)_{L-\alpha,\omega+K}}{\left(F_{d}\left(\widehat{{\bf m}}\circ S_{-\alpha}\overline{\widehat{{\bf m}}}\right)\right)_{\omega}},
\]
\item if $1\leq\alpha\leq\gamma-1 \text{ and }0\leq\omega\leq\delta-1$,
then
\[
\left(F_{d}\left(\widehat{{\bf x}}\circ S_{\alpha}\overline{\widehat{{\bf x}}}\right)\right)_{\omega}+\frac{d^{3}}{KL}\frac{\left(F_{L}N_{K,L}^{T}F_{K}^{T}\right)_{L-\alpha,\omega}}{\left(F_{d}\left(\widehat{{\bf m}}\circ S_{L-\alpha}\overline{\widehat{{\bf m}}}\right)\right)_{\omega}}=\frac{d^{3}}{KL}\frac{\left(F_{L}Y_{K,L}^{T}F_{K}^{T}\right)_{L-\alpha,\omega}}{\left(F_{d}\left(\widehat{{\bf m}}\circ S_{L-\alpha}\overline{\widehat{{\bf m}}}\right)\right)_{\omega}}.
\]
\end{enumerate}
We will write the above equations in matrix form, with rows indexed from $1-\delta$ to $\delta-1$ and columns indexed from $1-\gamma$ to $\gamma-1$, as 
\begin{equation}\label{eqn: TUV}
T+U=V,
\end{equation}
where 
$T$ is $(2\delta-1)\times(2\gamma-1)$ matrix
with entries defined by
\begin{equation*}
T_{\alpha,\omega}=\left(F_{d}\left(\widehat{{\bf x}}\circ S_{\alpha}\overline{\widehat{{\bf x}}}\right)\right)_{\omega}, 
\end{equation*}
and the entries of $U$ and $V$ are given by
\begin{equation}\label{eq:Vdef}
U_{\alpha,\omega}= \frac{d^{3}}{KL}\frac{\left(F_{L}N_{K,L}^{T}F_{K}^{T}\right)_{\beta(\alpha),\nu(\omega)}}{\left(F_{d}\left(\widehat{{\bf m}}\circ S_{-\alpha}\overline{\widehat{{\bf m}}}\right)\right)_{\omega}} \quad\text{and}\quad V_{\alpha,\omega}=\frac{d^{3}}{KL}\frac{\left(F_{L}N_{K,L}^{T}F_{K}^{T}\right)_{\beta(\alpha),\nu(\omega)}}{\left(F_{d}\left(\widehat{{\bf m}}\circ S_{-\alpha}\overline{\widehat{{\bf m}}}\right)\right)_{\omega}},
\end{equation}
with
\begin{equation*}
\nu(\omega)=\begin{cases}
\omega+K &\text{if }1-\delta\leq \omega\leq -1\\
\omega &\text{if } 0\leq \omega\leq \delta-1,
\end{cases}
\quad\text{and}\quad 
\beta(\alpha)=\begin{cases}
-\alpha &\text{if }1-\delta\leq \omega\leq -1\\
L-\alpha &\text{if } 0\leq \omega\leq \delta-1,
\end{cases}
\end{equation*}
for $1-\gamma\leq\alpha\leq\gamma-1, 1-\delta\leq\omega\leq\delta-1.$
 Let $\mu_2$
be as in (\ref{eq:mu-2}). Then, by the same reasoning as in \eqref{eq:N_2kappa-1_bound}, we see
\begin{align}
\left\Vert U\right\Vert _{F}^{2}
 & \leq\frac{d^{6}}{K^{2}L^{2}}\frac{\left\Vert F_{L}N_{K,L}F_{K}^{T}\right\Vert _{F}^{2}}{{\displaystyle \mu_2^{2}}}\nonumber\\
 & =\frac{d^{6}}{K^{2}L^{2}\mu_2^{2}} L K\left\Vert N_{K,L}\right\Vert _{F}^{2}\nonumber\\
 & =\frac{d^{6}}{KL\mu_2^{2}}\left\Vert N_{K,L}\right\Vert _{F}^{2}.\label{eq:U_bound}
\end{align}
Furthermore, for all $\alpha$ and $\omega$ such that  $\left|\alpha\right|\leq\gamma-1$ and $\left|\omega\right|\leq\delta-1,$
\begin{align*}
T_{\omega,\alpha}=\left(F_{d}\left(\widehat{{\bf x}}\circ S_{\alpha}\overline{\widehat{{\bf x}}}\right)\right)_{\omega} & =\sum_{n=0}^{\gamma-1}\mathbbm{e}^{-\frac{2\pi \mathbbm{i}\phi n}{d}}\widehat{x}_{n}\overline{\widehat{x}_{n+\alpha}}.
\end{align*}
 Therefore, we see $T=WA,$ where 
$W\in\mathbb{C}^{\left(2\delta-1\right)\times\gamma}$  is the Vandermone matrix
 are given below by
\begin{align}
W & =\left[\begin{array}{cccc}
1 & \mathbbm{e}^{-\frac{2\pi \mathbbm{i}\cdot\left(-\delta+1\right)\cdot1}{d}} & \cdots & \mathbbm{e}^{-\frac{2\pi \mathbbm{i}\cdot\left(-\delta+1\right)\cdot\left(\gamma-1\right)}{d}}\\
\vdots & \vdots & \ddots & \vdots\\
1 & \mathbbm{e}^{-\frac{2\pi \mathbbm{i}\cdot\left(-1\right)\cdot1}{d}} & \cdots & \mathbbm{e}^{-\frac{2\pi \mathbbm{i}\cdot\left(-1\right)\cdot\left(\gamma-1\right)}{d}}\\
1 & 1 & \cdots & 1\\
1 & \mathbbm{e}^{-\frac{2\pi \mathbbm{i}\cdot1\cdot1}{d}} & \cdots & \mathbbm{e}^{-\frac{2\pi \mathbbm{i}\cdot1\cdot\left(\gamma-1\right)}{d}}\\
\vdots & \vdots & \ddots & \vdots\\
1 & \mathbbm{e}^{-\frac{2\pi \mathbbm{i}\cdot\left(\delta-1\right)\cdot1}{d}} & \cdots & \mathbbm{e}^{-\frac{2\pi \mathbbm{i}\cdot\left(\delta-1\right)\cdot\left(\gamma-1\right)}{d}}
\end{array}\right], \label{eqn:Vandermonde} \end{align}
and $A\in\mathbb{C}^{\gamma\times\left(2\gamma-1\right)}$ is the partial autocorrelation type matrix 
\begin{align*}
A & =\left[\begin{array}{ccccccc}
0 & \cdots & 0 & \left|\widehat{x}_{0}\right|^{2} & \widehat{x}_{0}\overline{\widehat{x}_{1}} & \cdots & \widehat{x}_{0}\overline{\widehat{x}_{\gamma-1}}\\
0 & \cdots & \widehat{x}_{1}\overline{\widehat{x}_{0}} & \left|\widehat{x}_{1}\right|^{2} & \widehat{x}_{1}\overline{\widehat{x}_{2}} & \cdots & 0\\
\vdots & \vdots & \vdots & \vdots & \vdots & \vdots & \vdots\\
0 & \cdots & \widehat{x}_{\gamma-2}\overline{\widehat{x}_{\gamma-3}} & \left|\widehat{x}_{\gamma-2}\right|^{2} & \widehat{x}_{\gamma-2}\overline{\widehat{x}_{\gamma-1}} & \cdots & 0\\
\widehat{x}_{\gamma-1}\overline{\widehat{x}_{0}} & \cdots & \widehat{x}_{\gamma-1}\overline{\widehat{x}_{\gamma-2}} & \left|\widehat{x}_{\gamma-1}\right|^{2} & 0 & \cdots & 0
\end{array}\right].
\end{align*}
Therefore, by \eqref{eqn: TUV} we have
\[
WA+U=V.
\]
$W$ has full rank since it is a Vandermonde matrix with distinct nodes, and therefore since $2\delta-1\geq \gamma,$ it has a left inverse given by 
$
W^{\dagger}=\left(W^{*}W\right)^{-1}W^{*}.
$
Thus,
\begin{equation*}
W^{\dagger}V=A+W^{\dagger}U,
\end{equation*}
and so, by \eqref{eq:U_bound} \begin{align}
\left\Vert W^{\dagger}V-A\right\Vert _{F} & =\left\Vert W^{\dagger}U\right\Vert _{F}\nonumber\\
 & \leq\left\Vert W^{\dagger}\right\Vert _{2}\left\Vert U\right\Vert _{F}\nonumber\\
 & \leq\frac{1}{\sigma_{min}\left(W\right)}\frac{d^{3}}{\sqrt{KL}\mu_2}\left\Vert N_{K,L}\right\Vert _{F}\label{eqn: WVAWU},
\end{align}
where $\sigma_{min}(W)$ is the minimal singular value of $W.$

Let $P:\mathbb{C}^{\gamma\times(2\gamma-1)}\rightarrow\mathbb{C}^{\gamma\times\gamma}$ be a reshaping operator, such that if $M=(M_{1-\gamma},\ldots,M_0,\ldots,M_{\gamma-1})$ is a $\gamma\times(2\gamma-1)$ matrix, 
\begin{equation}
(P(M))_{i,j}=M_{i,j-i}
\label{eqn:reshape_A}
\end{equation}
for $0\leq i,j\leq \gamma-1$ so that 
\[
P(A)=\left.\widehat{{\bf x}}\right|{}_{\left[\gamma\right]_{0}}\left.\widehat{{\bf x}}^{*}\right|{}_{\left[\gamma\right]_{0}}=\left[\begin{array}{ccccc}
\left|\widehat{x}_{0}\right|^{2} & \widehat{x}_{0}\overline{\widehat{x}_{1}} & \widehat{x}_{0}\overline{\widehat{x}_{2}} & \cdots & \widehat{x}_{0}\overline{\widehat{x}_{\gamma-1}}\\
\widehat{x}_{1}\overline{\widehat{x}_{0}} & \left|\widehat{x}_{1}\right|^{2} & \widehat{x}_{1}\overline{\widehat{x}_{2}} & \cdots & \widehat{x}_{1}\overline{\widehat{x}_{\gamma-1}}\\
\vdots & \vdots & \ddots & \vdots & \vdots\\
\widehat{x}_{\gamma-1}\overline{\widehat{x}_{0}} & \widehat{x}_{\gamma-1}\overline{\widehat{x}_{1}} & \widehat{x}_{\gamma-1}\overline{\widehat{x}_{2}} & \cdots & \left|\widehat{x}_{\gamma-1}\right|^{2}
\end{array}\right].
\]
and let $G=H(P(W^{\dagger}V)),$ where $H$ is the Hermitianizing operator defined in \eqref{eqn:herm}.
\eqref{eq:beta} and the fact that $\|\widehat{\mathbf{x}}\|_2=\sqrt{d}\|\mathbf{x}\|_2$ for all $\mathbf{x}\in\mathbb{C}^d,$
imply that $\left\Vert N_{K,L}\right\Vert _{F}\leq\frac{\beta}{d}\left\Vert \widehat{{\bf x}}\right\Vert _{2}^{2},$
 so the fact that  $P(A)=\left.\widehat{{\bf x}}\right|{}_{\left[\gamma\right]_{0}}\left.\widehat{{\bf x}}^{*}\right|{}_{\left[\gamma\right]_{0}}$ is Hermitian, together with \eqref{eqn: WVAWU} implies

\begin{align*}
\left\Vert G-\left.\widehat{{\bf x}}\right|{}_{\left[\gamma\right]_{0}}\left.\widehat{{\bf x}}^{*}\right|{}_{\left[\gamma\right]_{0}}\right\Vert _{F} &= \left\Vert H(P(W^{\dagger}V))-H(P(A))\right\Vert _{F}\\
& \leq\left\Vert W^{\dagger}V-A\right\Vert _{F}\\
 & \leq\frac{1}{\sigma_{min}\left(W\right)}\frac{d^{3}}{\sqrt{KL}\mu_2}\left\Vert N_{K,L}\right\Vert _{F}\\
&\leq\frac{\beta}{\sigma_{min}\left(W\right)}\frac{d^{2}}{\sqrt{KL}\mu_2}\left\Vert \widehat{{\bf x}}\right\Vert _{2}^{2}.
\end{align*}
By Lemma 8 of \cite{iwen2018phase}, if $\lambda_{1}$ is the
lead eigenvalue of $G$ and ${\bf v}_{1}$
is an associated normalized eigenvector, 
then
\begin{align*}
\min_{\theta\in\left[0,2\pi\right]}\left\Vert \widehat{{\bf x}}-\mathbbm{e}^{\mathbbm{i}\theta}\widehat{\mathbf{x}}_{e}\right\Vert _{2} & =\min_{\theta\in\left[0,2\pi\right]}\left\Vert \mathbbm{e}^{\mathbbm{i}\theta}\left.\widehat{{\bf x}}\right|_{\left[\gamma\right]_{0}}-\sqrt{\left|\lambda_{1}\right|}{\bf v}_{1}\right\Vert _{2}\\
 & \leq\frac{\left(1+2\sqrt{2}\right)\beta}{\sigma_{min}\left(W\right)}\frac{d^{2}}{\sqrt{KL}\mu_2}\left\Vert \widehat{{\bf x}}\right\Vert _{2}.
\end{align*}
\eqref{eq:guarantee_2intro} follows by taking the inverse Fourier transform of both sides.
\end{proof}

\section{Numerical Experiments}
\label{sec:NumEval}
We now present numerical experiments which demonstrate the robustness and efficiency of the proposed algorithms and  provide comparisons to existing phase retrieval methods. These results were
generated using the open source {\em BlockPR} MATLAB software package (freely available at
\cite{bitbucket_BlockPR}) on a desktop computer (iMac, 2017) with an Intel$^\circledR$ Core\texttrademark i7-7700
(7\textsuperscript{th} generation, quad core) processor, 16GB RAM, and running macOS High Sierra and 
MATLAB R2018b. In all of our plots, each data point  
was obtained by averaging the results of $100$ trials.

Unless otherwise stated, we used i.i.d. mean zero complex Gaussian random test signals with 
measurement errors modeled using a (real) i.i.d. Gaussian noise model. We will report both the  signal to noise ratio (SNR) and reconstruction error
in decibels (dB) with 
\[  \mbox{SNR (dB)} = 10 \log_{10} \left( 
        \frac{\sum_{k=1}^K\sum_{\ell=1}^L\vert \langle \mathbf{x},  S_{\ell}W_k\mathbf{m} 
            \rangle \vert ^4}
            {D\sigma^2} \right), \quad  
    \mbox{Error (dB)} = 10 \log_{10} \left( \frac{\min_\theta \|\mathbbm{e}^{\mathbbm{i} \theta} \mathbf{x}_e - 
	\mathbf{x} \|_2^2}{\| \mathbf{x} \|^2_2}\right),  \]
where $\mathbf{x}, \mathbf{x}_e, \sigma^2$ and $D\coloneqq KL$ denote the true signal, recovered 
signal, (Gaussian) noise variance, and number of measurements respectively.

We will present selected results comparing the proposed formulation against other
popular phase retrieval algorithms such as {\em PhaseLift} \cite{candes2013phaselift} (implemented as a
trace-regularized least-squares problem using the first order convex optimization package TFOCS 
\cite{tfocspaper,tfocs}, {\em Hybrid Input-Output/Error Reduction (HIO+ER)} alternating projection 
algorithm \cite{bauschke2002phase,fienup1982phase}, and {\em Wirtinger Flow} \cite{Candes2015Wirtinger}. 
We note that more accurate results using {\em PhaseLift} may be obtained using other solvers and 
software packages (such as CVX \cite{cvxpaper,cvx}), albeit at a prohibitively expensive computational 
cost. For the HIO+ER algorithm, the following two projections were utilized: (i) projection onto the 
measured magnitudes, and (ii) projection onto the span of these measurement vectors. The initial guess was set to be the 
zero vector, although use of a random starting guess did not change the qualitative nature of the results. 
As is common practice, (see, for example, \cite{fienup1982phase})   we implemented the HIO+ER algorithm  in blocks of twenty-five HIO iterations followed by five ER iterations  in order  to accelerate 
 the convergence of the algorithm.
To minimize computational cost while ensuring convergence, the total number of HIO+ER iterations was limited to $600$ (see Figure 
\ref{fig:iter_HIOER}).
\begin{figure}[h]
    \centering
 \includegraphics[clip=true, trim=1.75cm 6.75cm 2cm 7cm, scale=0.5]{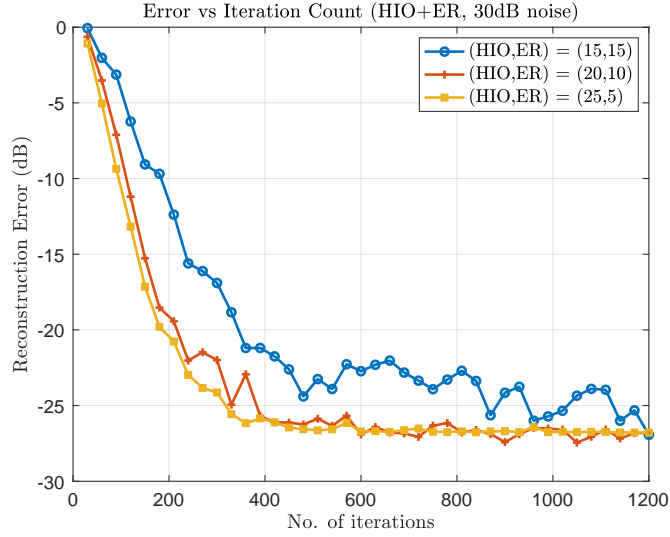}     
 \caption[Selection of HIO+ER iteration parameters.]{Selection of HIO+ER iteration parameters\footnotemark}
    \label{fig:iter_HIOER}
\end{figure}
\footnotetext{(HIO,ER) $= (x,y)$ indicates 
    that every $x$ iterations of the HIO algorithm was followed by $y$ iterations of the 
    ER algorithm.}
\subsection{Empirical Validation of Algorithm 1}
\label{sec:numerics_alg1}
In Algorithm 1, whose convergence is guaranteed by Theorem \ref{thm:Algorithm_1_guarantees}, we assume that our measurements are obtained using a bandlimited mask with $\mbox{supp}\left(\widehat{\mathbf{m}}\right)\subseteq[\rho]_0.$
To demonstrate the effictiveness of this algorithm, we performed numerical experiments on the following two types of masks:
\begin{equation}\label{eqn: randommask}
	\widehat{m}_k = \begin{cases}
				\left(1+0.5a_{\mathcal U}\right) \mathbbm{e}^{2\pi \mathbbm i a_{\mathcal U}}&\text{if }k \in [\rho]_0 \\
				0&\text{otherwise}
				\end{cases}
				\quad 
		a_{\mathcal U} \sim \mathcal U (0, 1),  \quad \text{(Random Mask)}
\end{equation}
where $\mathcal U(0,1)$ denotes an i.i.d uniform random distribution on the interval $[0,1]$, and  
\begin{equation}
	\widehat{m}_k = \begin{cases}
				\frac{\mathbbm{e}^{-k/a}}{\sqrt[4]{2\rho-1}}&\text{if }k \in [\rho]_0 \\
				0&\text{otherwise}
				\end{cases}
				\quad 
		a \coloneqq \max\left(4, (\rho-1)/2\right). 
		\quad \text{(Exponential Mask)}
	\label{eq:expmask}
\end{equation}
The exponential mask in (\ref{eq:expmask}) is closely related to the deterministic masks first introduced  
in \cite{iwen2016fast}. The mask-dependent constant $\mu_1$ (see (\ref{eq:mu}) 
in Theorem \ref{thm:Algorithm_1_guarantees}) for the random mask, with $d=60$ and $\rho=8$ (and
averaged over $50$ trials), was 
$2.858\times 10^{-1}$. The behavior for other choices of $d$ and
$\rho$ was similar. For the exponential mask, this constant was 
$2.267\times 10^{-2}$. The qualitative and quantitative performance of the algorithm was similar with 
both families of masks.
 
We performed experiments with both Algorithm 1, as presented in Section \ref{sec:recGuar}, and also with a modified version which uses a post-processing procedure to obtain improved accuracy. The modified algorithm replaces Steps (5) and (7) of Algorithm~1 (referred to  as Diag. Mag. Est. and Norm. Ang. Sync. in Figure \ref{fig:postprocess}) with 
the eigenvector based magnitude estimation procedure (Eig. Mag. Est.)  in Section 6.1 of \cite{iwen2018phase}, and the graph 
Laplacian based angular synchronization method (Graph Ang. Sync.) described in Algorithm 3 of B. Preskitt's dissertation 
\cite{preskitt2018phase}. 
  As seen in Figure \ref{fig:postprocess},  which plots the reconstruction 
error at various noise levels with  $d=255$,  $L=15,$ and a random mask constructed as in \eqref{eqn: randommask} with $\rho=8,$ these changes offered improved reconstruction accuracy.

\begin{figure}[hbtp]
\centering
\begin{subfigure}[t]{0.495\textwidth}
\centering
    \includegraphics[clip=true, trim=1.75cm 6.5cm 2.5cm 7cm, scale=0.465]{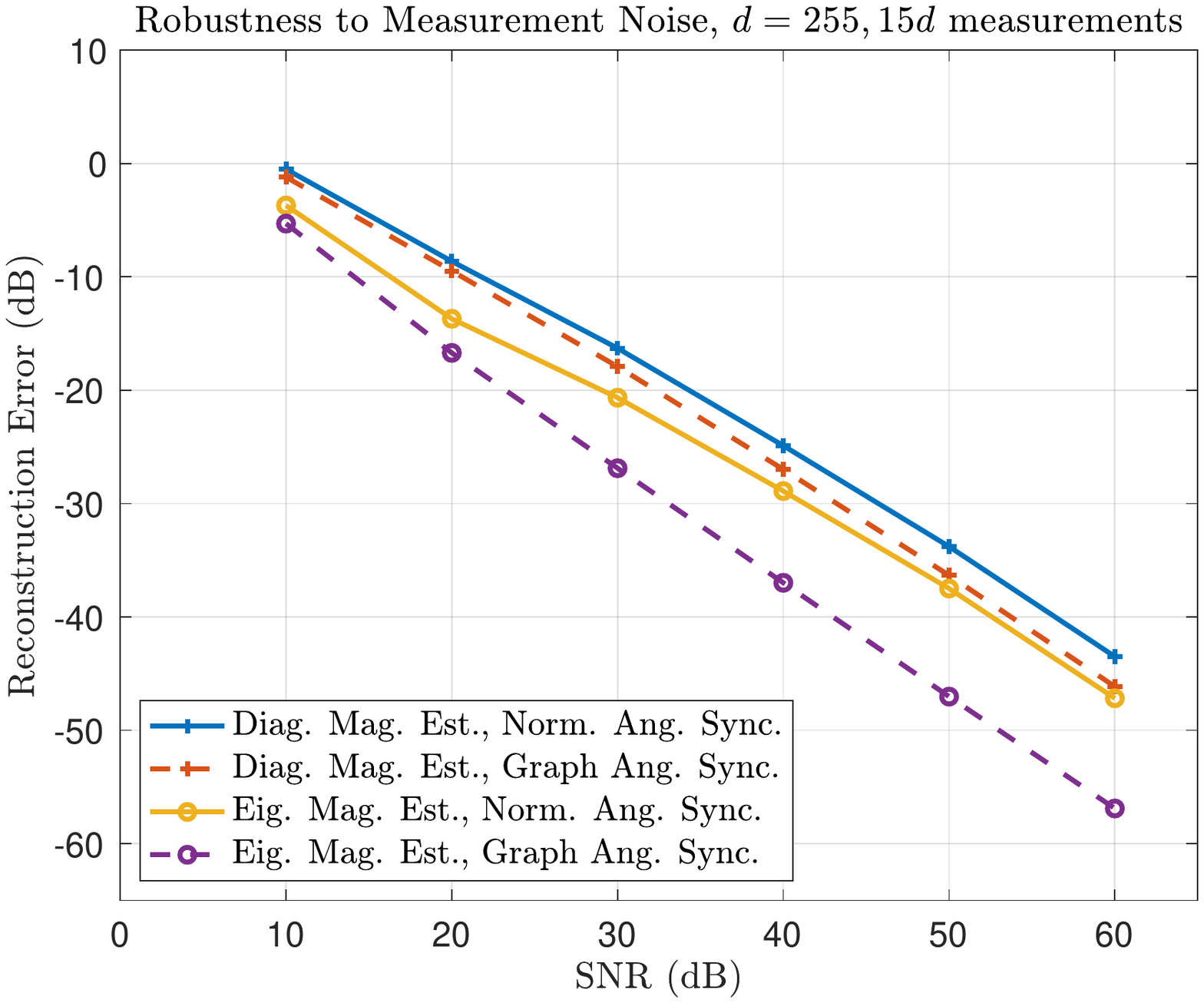}
   \caption{Reconstruction accuracy for Algorithm 1 with and without modifications to Steps (5) and/or (7).}
    \label{fig:postprocess}
\end{subfigure}
\hfill
\begin{subfigure}[t]{0.495\textwidth}
\centering
    \includegraphics[clip=true, trim=1.75cm 6.5cm 2.5cm 7cm, scale=0.465]{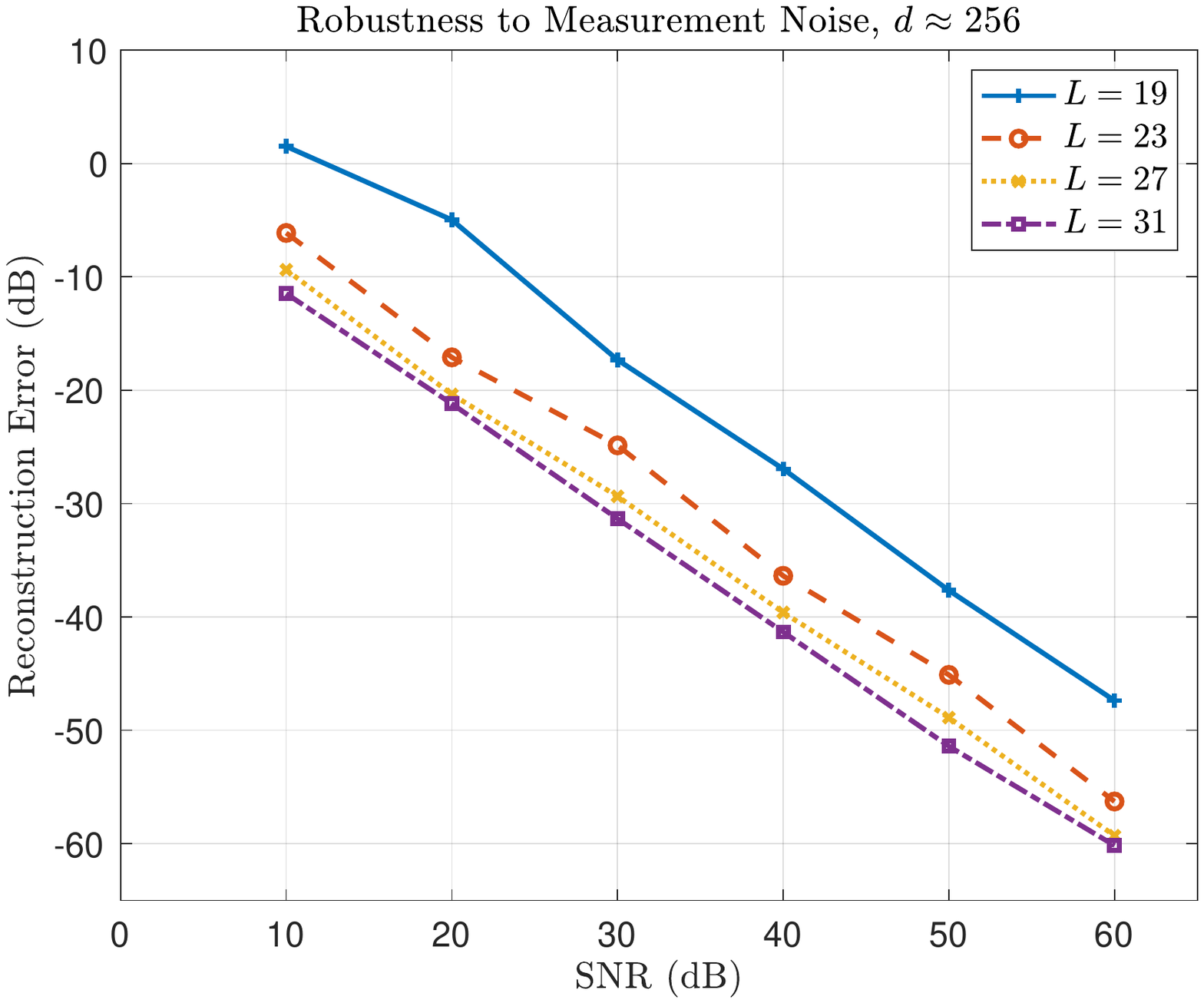}
    \caption{Reconstruction accuracy vs. number of shifts $L$ for Algorithm 1 (w/ mod. Steps (5),(7)).}
    \label{fig:error_vs_shifts}\end{subfigure}
\caption{Evaluating the performance of Algorithm 1  
  for various parameter choices }
\label{fig:alg1_parameters}
\end{figure}
\begin{figure}[hbtp]
\centering
\begin{subfigure}[t]{0.495\textwidth}
\centering
    \includegraphics[clip=true, trim=1.75cm 6.5cm 2.5cm 7cm, scale=0.465]{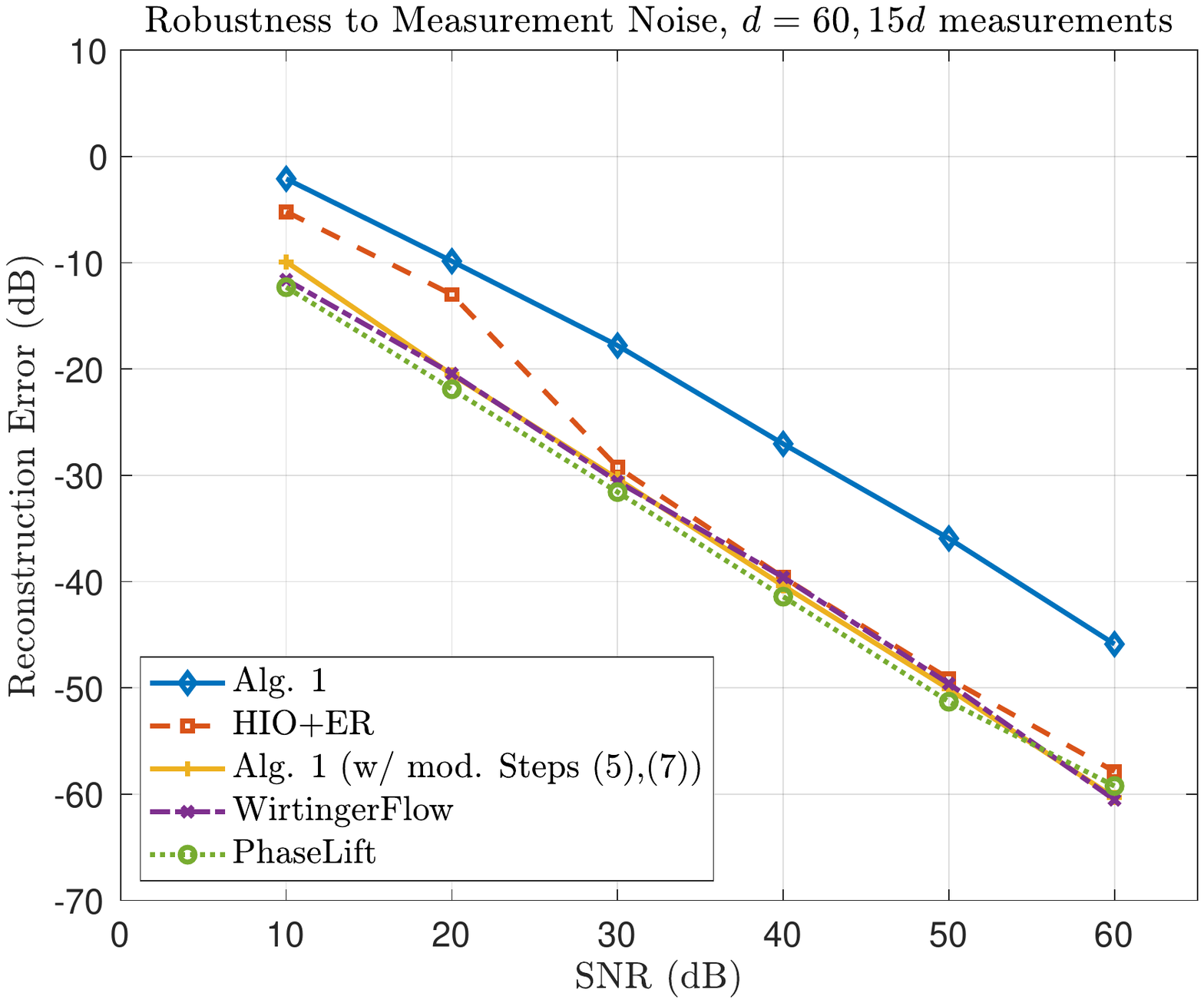}
    \caption{Reconstruction accuracy vs. added noise}
    \label{fig:noise}
\end{subfigure}
\hfill
\begin{subfigure}[t]{0.495\textwidth}
\centering
    \includegraphics[clip=true, trim=1.75cm 6.5cm 2.5cm 7cm, scale=0.465]{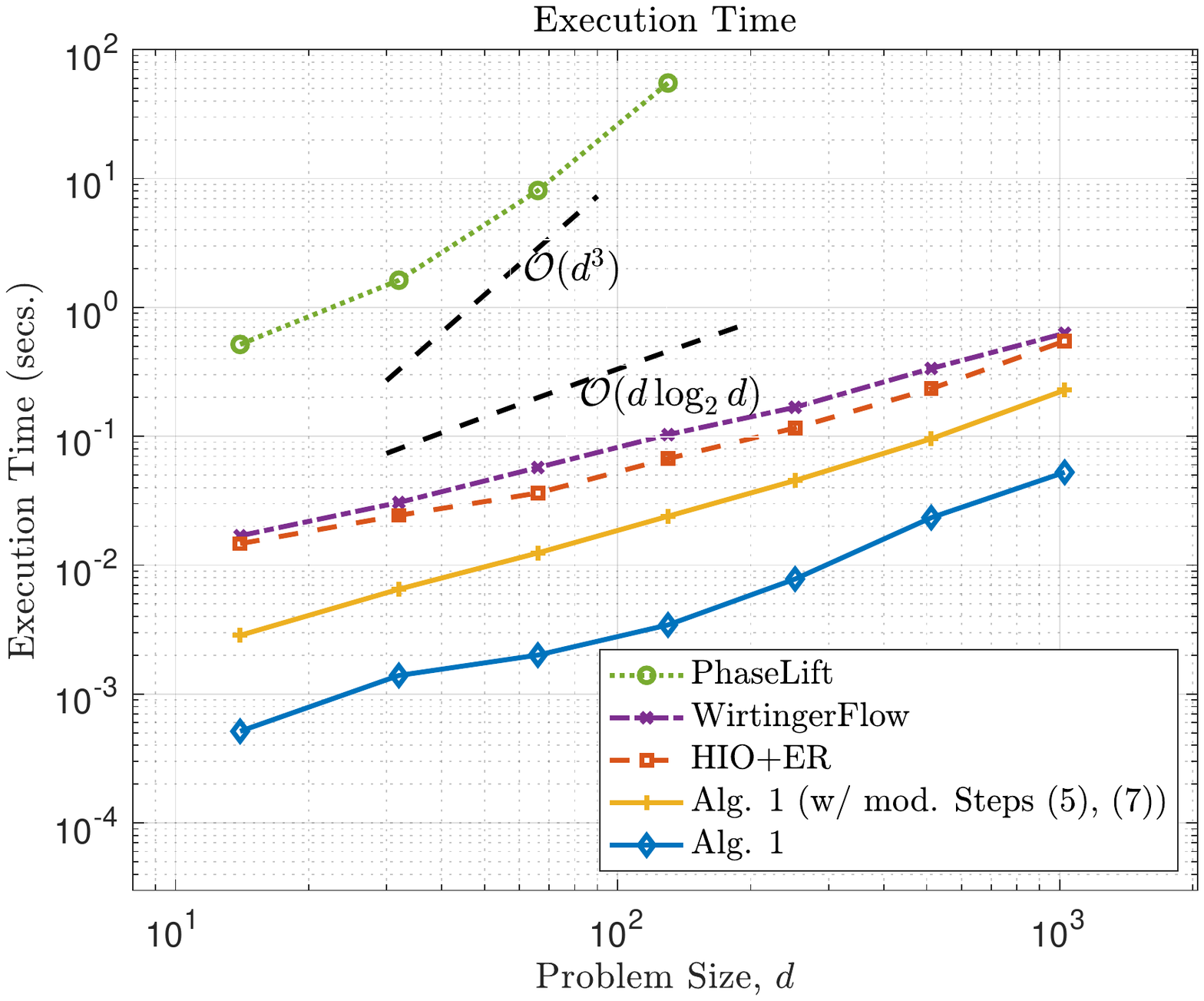}
    \caption{Execution time versus signal size $d$}
    \label{fig:exectime}
\end{subfigure}
\caption{Evaluating the robustness and efficiency of Algorithm 1 (and Theorem \ref{thm:Algorithm_1_guarantees})}
\label{fig:alg1_robustness_efficiency}
\end{figure}

Figure \ref{fig:error_vs_shifts} demonstrates the importance of  the number of shifts $L.$ 
As expected, the reconstructions using larger $L$ (which entails using more measurements, each corresponding 
to greater overlap between successive masked regions of the specimen) 
 offered improved accuracy. 
In order to ensure that $L$ divides $d,$ we varied the value of $d\approx256$ slightly for different values of $L.$ As in Figure \ref{fig:postprocess}, we used random masks, constructed as in \eqref{eqn: randommask}, with $\rho=8.$ 
We observe that for  larger values of $L$, performance improved by about $10$dB. We also note that, in practice, a suitable value of $L$ can be 
chosen depending on whether the proposed method is used as a reconstruction procedure or as an initializer for 
another algorithm. 

In Figure \ref{fig:noise}, we compare the performance of the proposed method to other popular phase retrieval methods. 
Reconstruction errors for recovering a signal of length $d=60$ using $L=15$ shifts and a random mask with $\rho=8$  
 are plotted for different levels of noise. We see that the proposed method performs well in comparison to 
the other algorithms, and even nearly matches the significantly more expensive algorithms such as  {\em PhaseLift} which are based on semidefinite programming (SDP).  
We note that the Wirtinger Flow method is sensitive to the choice of parameters and iteration counts. We 
used fewer total iterations (150 at 10dB SNR) at higher noise levels and more iterations (4500 at 60dB) at lower levels in order  to ensure that the algorithm converged to the level of noise. We are not aware of any 
methodical procedure for setting the various algorithmic parameters when utilizing the (local) measurement 
constructions considered in this paper. We also note that, of the algorithms considered, Algorithm 1 is the only one that has  a theoretical convergence guarantee which applies to this class of spectrogram-type measurements. 

Figure \ref{fig:exectime} plots the corresponding execution time 
for the various algorithms as a function of the problem size $d$. In this case, random masks were chosen 
with $\rho = \lceil 1.25 \log_2 d\rceil$ along with $L= \rho + \lceil\rho/2\rceil -1$ shifts. The figure confirms the 
essentially FFT--time computational cost of Algorithm 1. Furthermore, it also shows that while the post-processing 
procedure of modifying steps (5) and (7)  does  increase the computational cost of the algorithm, it does not increase it drastically.\footnote{The modified Step (7) uses MATLAB's 
{\tt eigs} command which can be computationally inefficient for this problem for large $d$; we defer a more detailed 
analysis and more efficient implementations to future work}  In particular, even with these modifications, the proposed method provides best--in--class computational efficiency, 
and is significantly faster than the {\em HIO+ER}, {\em Wirtinger Flow}, and {\em PhaseLift} algorithms.

\subsection{Empirical Validation of a Lemma \ref{thm:generalSumCollapse-1} Based Approach}
\label{sec:numerics_lemma11}
We next provide numerical results validating an approach based on Lemma \ref{thm:generalSumCollapse-1} that applies the Wigner deconvolution method to the setting considered in \cite{iwen2018phase}. As in Theorem \ref{thm:Algorithm_2_guarantees}, we assume that $\mbox{supp}(\mathbf{m})\subseteq[\delta]_0,$ and we also add the assumption that $L=d.$  In this setting, we may apply Lemma \ref{thm:generalSumCollapse-1} and then solve for diagonal bands of $\mathbf{x}\mathbf{x}^*$ in a manner  analogous to Algorithms 1 and 2. We then can recover $\mathbf{x}$ by applying the same angular synchronization procedure as in Algorithm 1. We note that, because we are using Lemma \ref{thm:generalSumCollapse-1} rather than Lemma \ref{thm:generalSumCollapse-2},  we do not need to assume that $\mathbf{x}$ is bandlimited as we do in Theorem \ref{thm:Algorithm_2_guarantees}.  
As in Section \ref{sec:numerics_alg1}, we conducted experiments with both deterministicly constructed and randomly constructed masks, and found that we obtained similar results for both families of masks. The figures below use the 
exponential mask construction first introduced in \cite{iwen2018phase},
\begin{equation}
	m_k = \begin{cases}
			\frac{\mathbbm{e}^{-k/a}}{\sqrt[4]{2\delta-1}},&\text{if }k \in [\delta]_0, \\
			0,&\text{otherwise},
		\end{cases}
				\quad 
		a := \max(4, (\delta-1)/2), 
	\label{eq:expmask_lemma15}
\end{equation}
and therefore allow us to directly compare the performance of the proposed method with the algorithm introduced in 
\cite{iwen2018phase}. The mask-dependent constant $\mu_2$ (see (\ref{eq:mu2} in Theorem \ref{thm:Algorithm_2_guarantees})
for this mask, with $d = 247$ and $\delta = 10$,
was $1.392 \times 10^{-2}$, with similar behavior for different choices of $d$ and $\delta$. Figure
\ref{fig:postprocess_K}  plots the reconstruction error with $d=247$, $K=19,$ $\delta=10,$ and $\mathbf{m}$ as in \eqref{eq:expmask_lemma15}. Results with and without the post-processing modifications 
 described in Section \ref{sec:numerics_alg1} are provided, along with results 
from \cite{iwen2018phase} with and without the modified (see \S 6.1 in \cite{iwen2018phase}) magnitude estimation and {\em HIO+ER} post-processing 
 ($60$ iterations).

\begin{figure}[hbtp]
\centering
\begin{subfigure}[t]{0.495\textwidth}
\centering
    \includegraphics[clip=true, trim=1.75cm 6.5cm 2.5cm 7cm, scale=0.465]{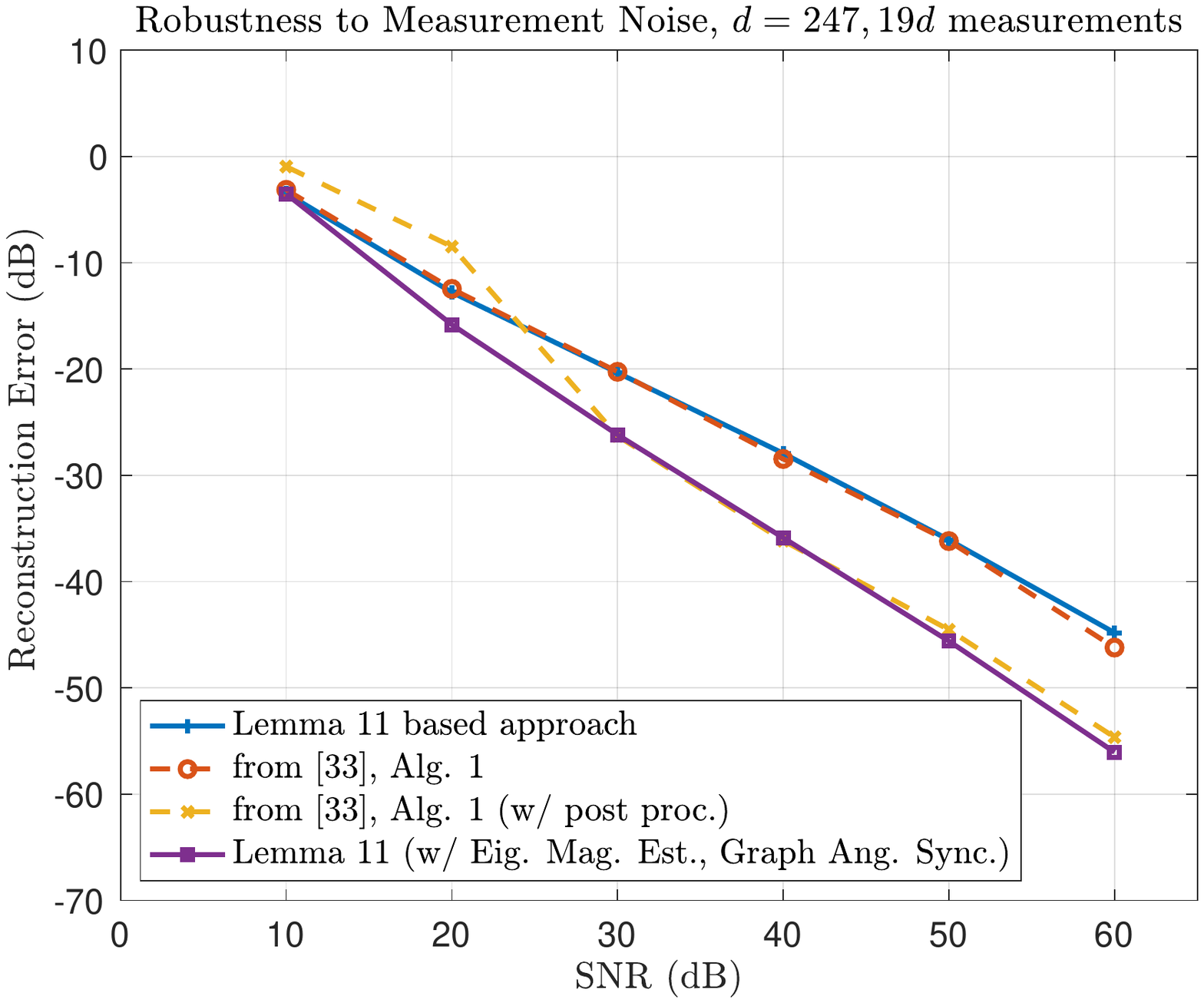}
    \caption{Reconstruction accuracy with and without improved magnitude estimation/angular synchronization, 
and comparison with results from \cite{iwen2018phase}.}
    \label{fig:postprocess_K}
\end{subfigure}
\hfill
\begin{subfigure}[t]{0.495\textwidth}
\centering
    \includegraphics[clip=true, trim=1.75cm 6.5cm 2.5cm 7cm, scale=0.465]{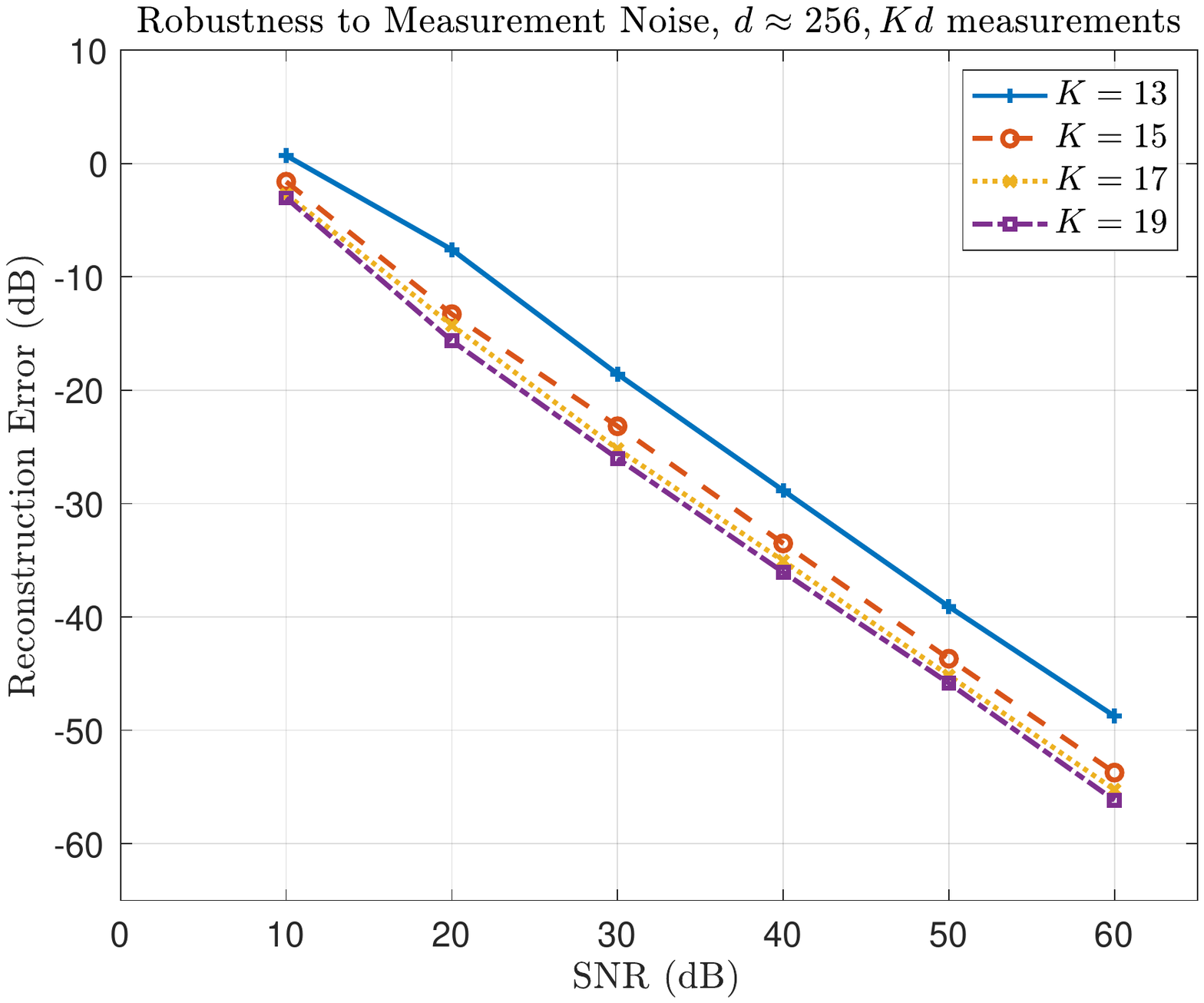}
    \caption{Reconstruction accuracy vs. $K,$ the number of Fourier modes.}
    \label{fig:error_vs_modes}
\end{subfigure}
\caption{Evaluating the performance of the Lemma \ref{thm:generalSumCollapse-1} 
  based approach}
\label{fig:lemma11_parameters}
\end{figure}

As can be seen in Figure \ref{fig:lemma11_parameters}, the post-processing procedure yields a small improvement of about
$5$-$10$dB in the reconstruction error, especially at low noise levels. We  observe that the
Wigner deconvolution based approach yields numerical performance which is comparable to \cite{iwen2018phase} in the settings where the theoretical guarantees of  \cite{iwen2018phase} are applicable, while also adding the additional flexibility of allowing shifts of length $a>1$ under certain assumptions on either $\mathbf{m}$ or $\mathbf{x}$ as discussed in Theorems \ref{thm:Algorithm_2_guarantees} and \ref{thm:Algorithm_1_guarantees}. 

Next, we investigate the reconstruction accuracy as a function of $K$, the number of Fourier modes. 
Figure \ref{fig:error_vs_modes} plots reconstruction error in recovering a test signal for 
$K=13,15,17,$ and $19$ respectively, with  the 
exponential masks defined as in \eqref{eq:expmask_lemma15} with $\delta=10.$ As in Figure \ref{fig:error_vs_shifts}, we vary the signal length $d$ slighlty, in order to ensure that $K$ divides $d.$ As expected, the plot shows that reconstruction 
accuracy improves 
 when $K$ increases, i.e., when more 
measurements are acquired.

For completeness, we include noise robustness and execution time plots comparing the performance of
the proposed method to the {\em HIO+ER}, {\em PhaseLift}, and {\em Wirtinger Flow} algorithms in 
Figures \ref{fig:noise_lemma11} and  \ref{fig:exectime_lemma11} respectively. From 
Figure \ref{fig:noise_lemma11}, we see that the proposed
method (both with and without the modified magnitude estimation/angular synchronization procedures) 
performs well in comparison to {\em HIO+ER} 
and the other algorithms across a wide range of SNRs. Furthermore,  Figure \ref{fig:exectime_lemma11} 
demonstrates the essentially FFT--time computational cost of
the method as well as the best-in-class computational efficiency when compared to other competing 
algorithms.

\begin{figure}[hbtp]
\centering
\begin{subfigure}[t]{0.495\textwidth}
\centering
    \includegraphics[clip=true, trim=1.75cm 6.5cm 2.5cm 7cm, scale=0.465]{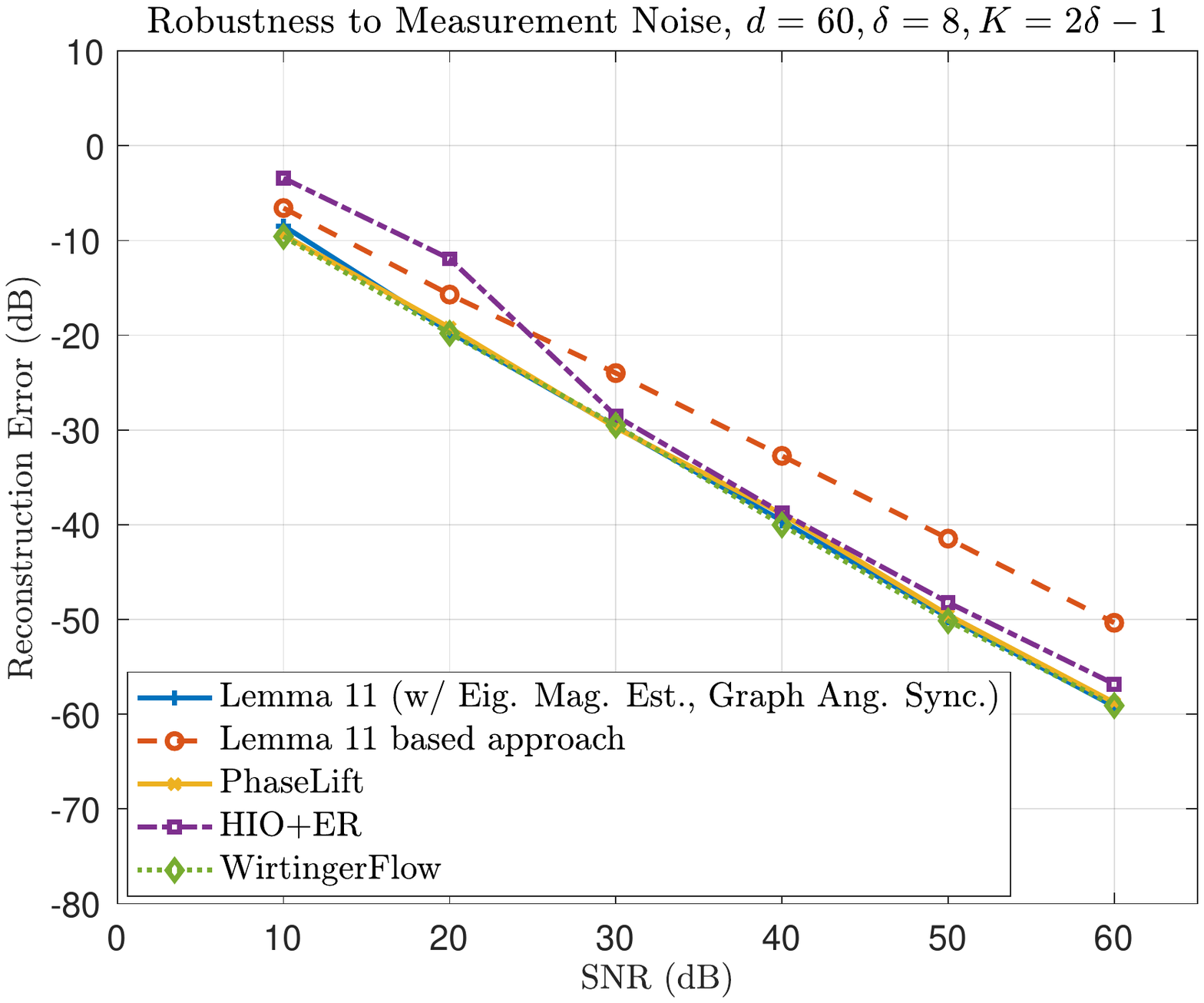}
    \caption{Reconstruction accuracy vs. added noise}
    \label{fig:noise_lemma11}
\end{subfigure}
\hfill
\begin{subfigure}[t]{0.495\textwidth}
\centering
    \includegraphics[clip=true, trim=1.75cm 6.5cm 2.5cm 7cm, scale=0.465]{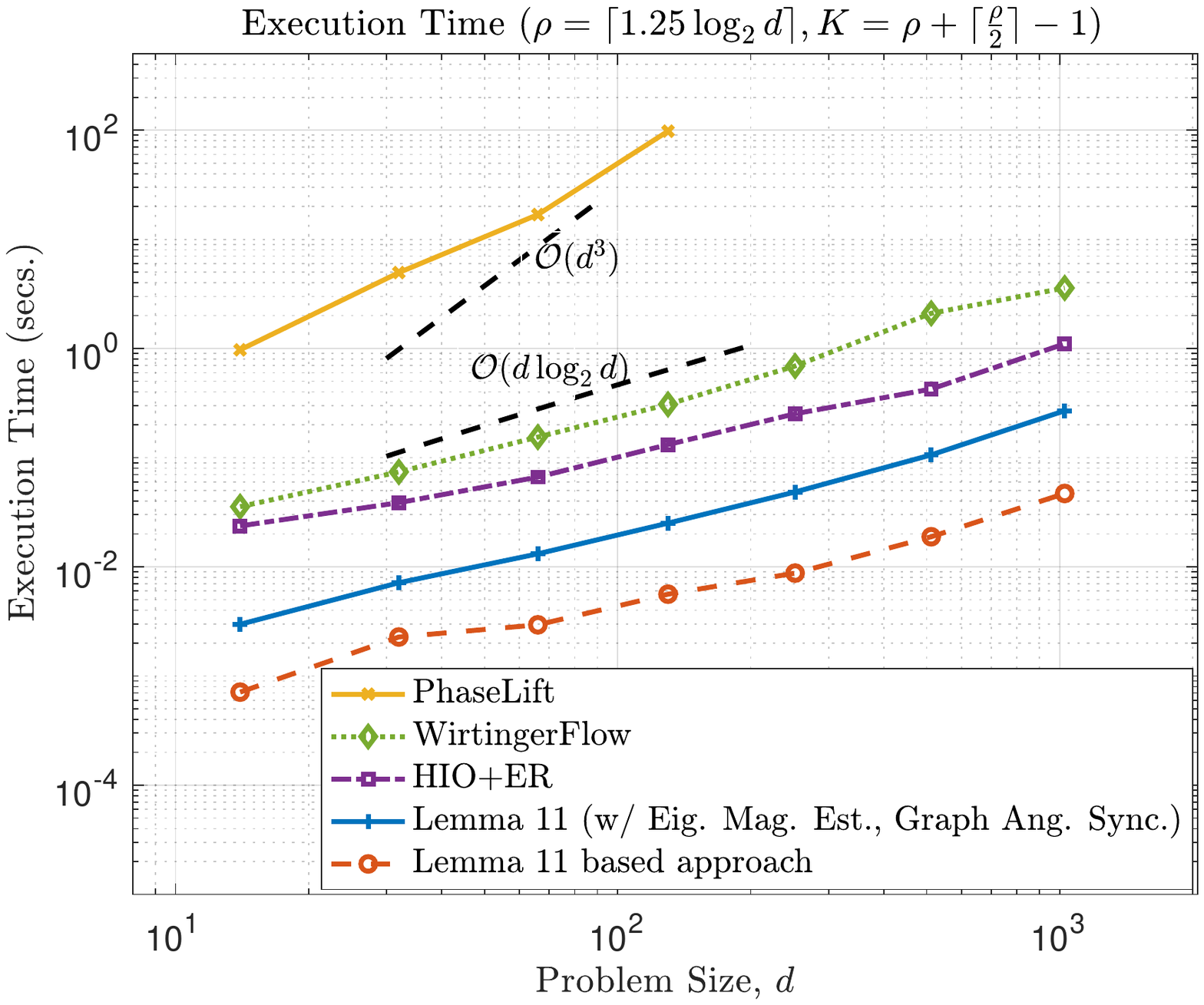}
    \caption{Execution time versus signal size $d$}
    \label{fig:exectime_lemma11}
\end{subfigure}
\caption{Evaluating the robustness and efficiency of the Lemma \ref{thm:generalSumCollapse-1} based approach}
\label{fig:lemma11_robustness_efficiency}
\end{figure}

\subsection{Empirical Validation of Algorithm 2}
\label{sec:numerics_alg2}

We now provide numerical results validating   
Algorithm 2, whose convergence is guaranteed by Theorem \ref{thm:Algorithm_2_guarantees}. We begin by noting that the Vandermonde matrix $W$ defined in
(\ref{eqn:Vandermonde}) often has a large condition number which poses a challenge in the
accurate evaluation of Step (4) in Algorithm 2. One possible solution is to utilize the 
Tikhonov regularized solution  $A = (W^*W +\sigma^2 I)^{-1}W^*V$ (see \cite{hansen2005rank} for example), 
where the regularization parameter $\sigma^2$ is chosen using a procedure such as the L-curve 
method \cite{hansen2000lcurve}. However, empirical simulations suggest that this procedure is not
sufficiently robust to achieve reconstruction accuracy up to the level of added noise. Therefore, we 
replace Steps (4)--(6) in Algorithm 2 by a modified non-stationary iterated 
Tikhonov method inspired by the work of Buccini et al. in \cite{buccini2017iterated}, which we detail in  Algorithm 3. This procedure 
works by iteratively computing a Tikhonov regularized solution to the equation in Step (4) of 
Algorithm 2; however, at each step, the solution is applied to the residual of 
$WA=V$,  with a geometrically decreasing regularization parameter.  Buccini et al. 
showed that a similar iterative procedure has benefits over traditional 
Tikhonov regularization for more standard linear systems. While our problem setting is different, our empirical 
results suggest a similar benefit. 
 We defer a more detailed 
theoretical analysis to future work.
\textbf{}
\begin{algorithm}[htbp] \label{alg:thirdalg}
\begin{raggedright}
\textbf{Inputs}
\par\end{raggedright}
\begin{enumerate}
\item Integers $\delta$ and $\gamma$, such that $\mbox{supp}\left({\bf {\bf m}}\right)\subseteq
    \left[\delta\right]_{0}$ and $\mbox{supp}\left({\bf \widehat{{\bf x}}}\right)
    \subseteq\left[\gamma\right]_{0}$.

\item Vandermonde matrix $W \in \mathbbm C^{(2\delta-1)\times \gamma}$ where 
    $W_{j,k} =\mathbbm{e}^{-\frac{2\pi \mathbbm{i}\left(j-\delta+1\right)k}{d}}\ 
    \text{for }j\in\left[2\delta-1\right]_{0}, k\in\left[\gamma\right]_{0}$.

\item Matrix $V \in \mathbbm C^{(2\delta-1)\times(2\gamma-1)}$ from Step (3) of Algorithm 2 and as
    specified in (\ref{eq:Vdef}). 

\item Non-stationary iterated Tikhonov parameters $\alpha_0$ and $q$ satisfying $\alpha_0>0$ and
    $0<q<1$.

\item Iteration count $N.$

\end{enumerate}
\begin{raggedright}
\textbf{Steps}
\par\end{raggedright}
\begin{enumerate}

\item Initialize $G\in\mathbbm C^{\gamma\times \gamma}$ and $A \in 
    \mathbbm C^{\gamma \times (2\gamma-1)}$ to zero.

\item For $k\gets 1 \ \text{to} \ N$ do 

\begin{enumerate}

\item Compute a rank-one approximation $G_1 \in \mathbbm C^{\gamma \times \gamma}$ of $G$:
    \[ G_1 = \tau_1 \mathbf{u}_1 \mathbf{v}_1^*, \]
    where $\tau_1$ is the largest singular value of $G$ and $\mathbf{u}_1$ and $\mathbf{v}_1$ are the 
    corresponding left and right singular vectors respectively. 
\item Let $A\in\mathbbm C^{\gamma \times (2\gamma-1)}$ be the matrix such that $P(A)=G_1$ and $A_{i,j}=0$ unless $j-i\leq j \leq j-i+\gamma-1,$  \text{ where $P$ is the reshaping operator defined in (\ref{eqn:reshape_A}).} 
%
%
\item Apply Tikhonov regularization with decaying regularization parameter to the residual: 
    \[ A \mapsfrom A + (W^*W + \alpha_0q^kI)^{-1}W^*\underbrace{(V-WA)}_\textrm{current residual} \]
\item Obtain an updated estimate of $G$: \[ G = P(A),\]
    
\item Hermitianize the matrix $G$: $G\mapsfrom\frac{1}{2}\left(G+G^{*}\right)$.

\end{enumerate}
\end{enumerate}
\begin{raggedright}
\textbf{Output}
\par\end{raggedright}
\begin{raggedright}
    An estimate of the matrix $G \in \mathbbm C^{\gamma\times \gamma}$ to be utilized in Step (5) of
    Algorithm 2.
\par\end{raggedright}
\raggedright{}\textbf{\caption{\textbf{Modified Non-Stationary Iterated Tikhonov Method with Geometrically
Decaying Regularization Parameters}}
}
\end{algorithm}

Figure \ref{fig:alg23_performance} presents empirical evaluation of the noise robustness and
computational efficiency of Algorithm 2 with the Modified Iterated Tikhonov Method of Algorithm 3.
Figure \ref{fig:noise_alg2} plots the reconstruction error with signals of length $d=190,$ with
frequency support of length $\gamma=10,$ using (complex random) masks with spatial support of length
$\delta=48.$ We used $K=2\delta-1$ Fourier modes and $L=2\gamma-1$ shifts, and utilized the
following iterated Tikhonov parameters: $q=0.8$, $N=20$, and $\alpha_0$ chosen using the L-curve
method.  We  note that using standard Tikhonov regularization (Alg. 2 in Figure
\ref{fig:noise_alg2}) yields rather poor results. An aggressive regularization parameter has to be
chosen to surmount the ill-conditioning effects in Step (4) of Algorithm 2. Consequently, even a few
iterations of the {\em HIO+ER} algorithm performs better than Algorithm 2. However, using the
modified iterated Tikhonov procedure (Alg. 2 (w/ Alg. 3) in Figure \ref{fig:noise_alg2}) yields
significantly improved results, with a clear improvement in noise robustness over even the {\em
HIO+ER} algorithm. Furthermore, Figure \ref{fig:etime_alg2} plots the execution time as a function
of the problem size for both Algorithms 2 and 3. The plot confirms that the modified iterative
Tikhonov procedure of Algorithm 3 does not impose a significant computational burden.\footnote{We
note that Step (1) of Algorithm 3 is computationally tractable since $\gamma$ is typically
small, and that the matrix $(W^*W + \alpha_0q^kI)^{-1}W^*$ in Step 2(c) can be pre-computed.}
Indeed, both Algorithms 2 and Algorithm 2 with the Modified Iterated Tikhonov Method of
Algorithm 3 are faster than the {\em HIO+ER} algorithm. We note that more efficient
implementations (involving fast computations of Vandermonde systems) of all the algorithms in
Figure \ref{fig:etime_alg2} may be possible; we defer this to future research.
\begin{figure}[hbtp]
\centering
\begin{subfigure}[t]{0.495\textwidth}
\centering
    \includegraphics[clip=true, trim=1.75cm 6.5cm 2.5cm 7cm, scale=0.465]{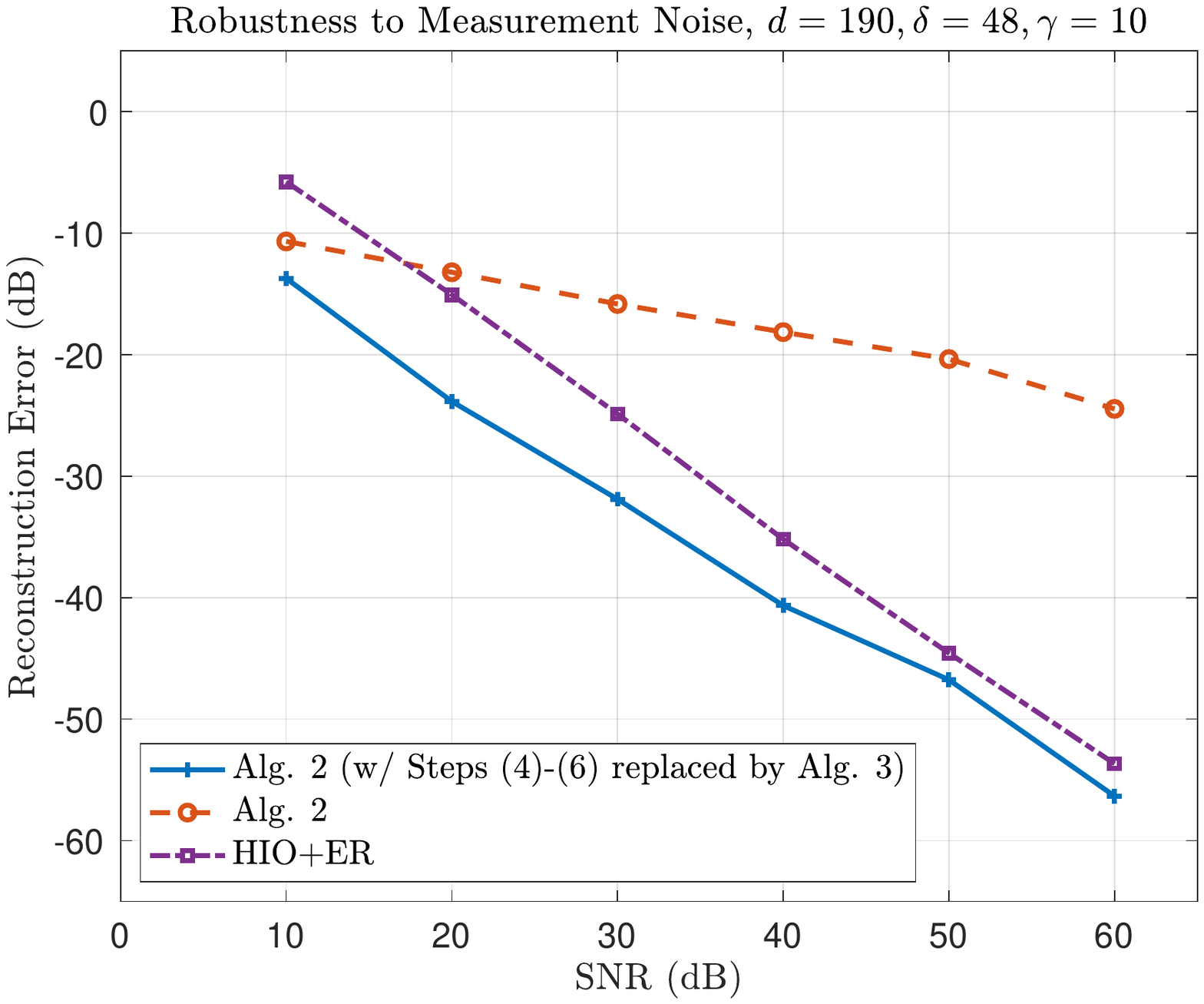}
    \caption{Reconstruction accuracy vs. added noise}
    \label{fig:noise_alg2}
\end{subfigure}
\hfill
\begin{subfigure}[t]{0.495\textwidth}
\centering
    \includegraphics[clip=true, trim=1.75cm 6.5cm 2.5cm 7cm, scale=0.465]{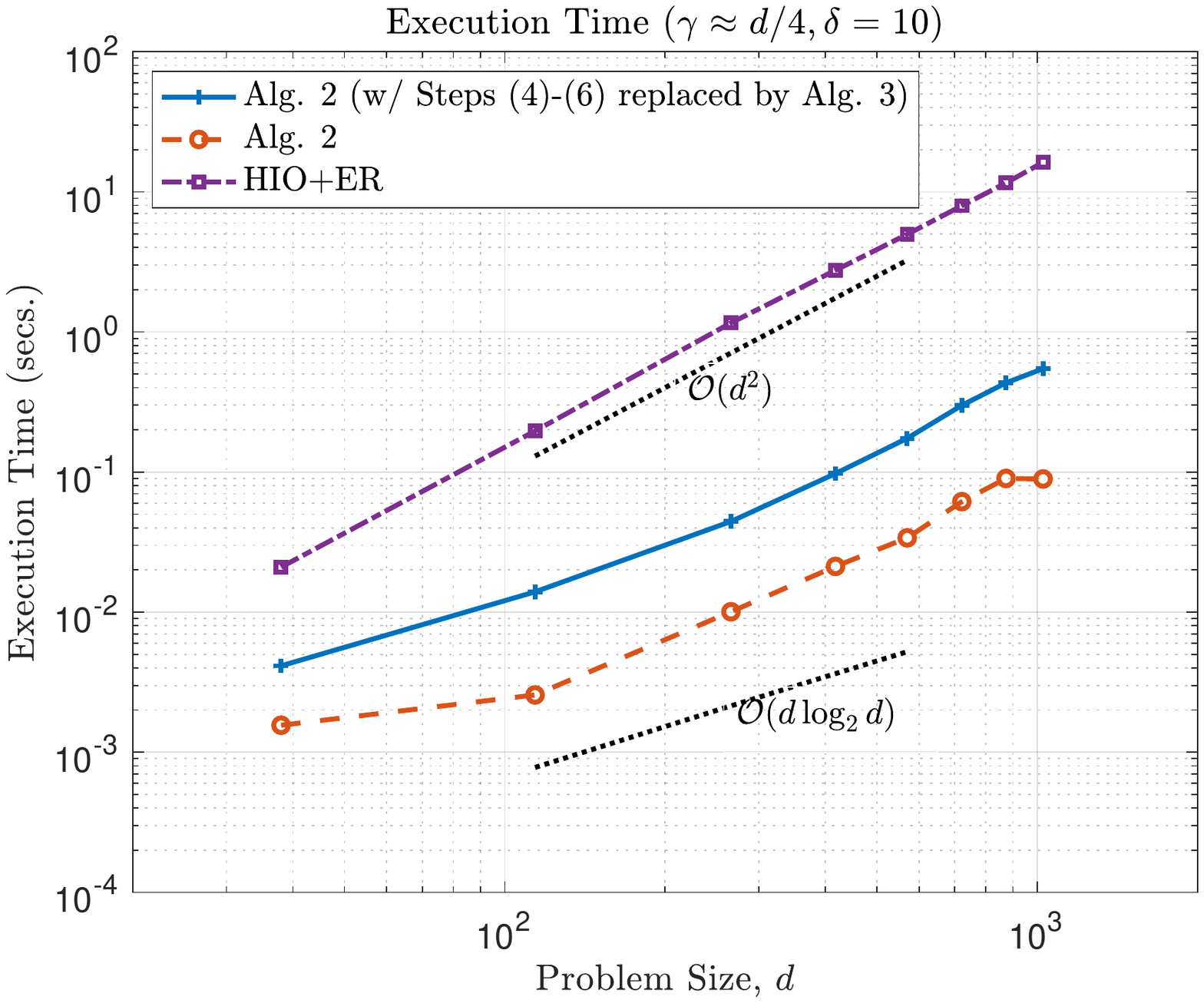}
    \caption{Execution time vs. signal size $d$}
    \label{fig:etime_alg2}
\end{subfigure}
\caption{Empirical validation of Theorem \ref{thm:Algorithm_2_guarantees} (Algorithm 2) and the
Modified Iterated Tikhonov Method of Algorithm 3}
\label{fig:alg23_performance}
\end{figure}

\section{Future Work}

In future work, one might develop  variants of the algorithms presented here for two-dimensional problems along the lines of \cite{iwen2017phase}. Additionally,
 one might also develop  variations of these algorithms for recovering compactly supported functions from sampled spectrogram measurements (see \cite{merhi2017recovery}) in the continuous setting. Furthermore, another, perhaps less direct, extension of these works would be to attempt to apply the Wigner distribution methods used here to the {\em sparse phase retrieval} problem. In, e.g., \cite{iwen2017robust} it was shown that sparse vectors $\x \in \C^d$ with $\| \x \|_0 \leq s$ can be recovered up to a global phase from only $m = \mathcal{O}(s \log (d / s))$ magnitude measurements of the form $\left\{ |\langle \x, \a_j \rangle|^2 \right\}^m_{j=1}$.  Thus, somewhat surprisingly, sparse phase retrieval problems generally do not require significantly more measurements to solve than compressive sensing problems.  One may be able to generate new sparse phase retrieval methods for STFT magnitude measurements of the type considered here by replacing the standard Fourier techniques used in the methods above with sparse Fourier transform methods \cite{Bittens2019,Merhi2019,Segal2013}. 
It has been shown that sparse phase retrieval problems can be solved in sublinear-time \cite{viswanathan2015fast}. The further development of sublinear-time methods for solving sparse phase retrieval problems involving STFT magnitude measurements could prove valuable in the future for use in extremely large imaging scenarios.

\section*{Acknowledgements}  Mark Iwen was supported in part by NSF DMS-1912706 and NSF CCF-1615489.  Sami Merhi was supported in part by NSF CCF-1615489.


\section*{Appendix}
In this section, we will prove the lemmas from Section \ref{Sec:BasicProps} as well as Propositions \ref{prop:mu_condition} and \ref{prop:mu_condition2}. 
\begin{proof}[The Proof of Lemma \ref{lem:DFTproperties}]
Let ${\bf x}\in\mathbb{C}^{d},$ and let $\ell,\omega\in\left[d\right]_{0}.$ 

Part \ref{FF}: \begin{align*}
\left(F_{d}\widehat{{\bf x}}\right)_{\omega} & =\sum_{k=0}^{d-1}\widehat{x}_{k}\mathbbm{e}^{-\frac{2\pi \mathbbm{i}k\omega}{d}}=\sum_{k=0}^{d-1}\sum_{n=0}^{d-1}x_{n}\mathbbm{e}^{-\frac{2\pi \mathbbm{i}nk}{d}}\mathbbm{e}^{-\frac{2\pi \mathbbm{i}k\omega}{d}}\\
 & =\sum_{k=0}^{d-1}\sum_{n=0}^{d-1}x_{-n}\mathbbm{e}^{\frac{2\pi \mathbbm{i}k\left(n-\omega\right)}{d}}=d x_{-\omega}=d\widetilde{x}_{\omega}.
\end{align*}

Part \ref{FW}:
\begin{align*}
\left(F_{d}\left(W_{\ell}{\bf x}\right)\right)_{\omega} & =\sum_{k=0}^{d-1}\left(x_{k}\mathbbm{e}^{\frac{2\pi \mathbbm{i}k\ell}{d}}\right)\mathbbm{e}^{-\frac{2\pi \mathbbm{i}k\omega}{d}}=\sum_{k=0}^{d-1}x_{k}\mathbbm{e}^{-\frac{2\pi \mathbbm{i}k\left(\omega-\ell\right)}{d}}\\
 & =\widehat{ x}_{\omega-\ell}=\left(S_{-\ell}\widehat{{\bf x}}\right)_{\omega}.
\end{align*}

Part \ref{FS}:
\begin{align*}
\left(F_{d}\left(S_{\ell}{\bf x}\right)\right)_{\omega} & =\sum_{k=0}^{d-1}x_{k+\ell}\mathbbm{e}^{-\frac{2\pi \mathbbm{i}k\omega}{d}}=\sum_{k=0}^{d-1}x_{k+\ell}\mathbbm{e}^{-\frac{2\pi \mathbbm{i}\left(k+\ell\right)\omega}{d}}\mathbbm{e}^{-\frac{2\pi \mathbbm{i}\left(-\ell\right)\omega}{d}} \\
 & =\mathbbm{e}^{\frac{2\pi \mathbbm{i}\ell\omega}{d}}\widehat{x}_{\omega}=\left(W_{\ell}\widehat{{\bf x}}\right)_{\omega}. \\
\end{align*}

Part \ref{WFS}:
\begin{align*}
\left(W_{-\ell}F_{d}\left(S_{\ell}\overline{\widetilde{{\bf x}}}\right)\right)_{\omega} & =\mathbbm{e}^{-\frac{2\pi \mathbbm{i}\ell\omega}{d}}\left(W_{\ell}\widehat{\overline{\widetilde{{\bf x}}}}\right)_{\omega}\quad \text{ (by part 3)}\\
&=\left(\widehat{\overline{\widetilde{{\bf x}}}}\right)_{\omega}=\sum_{k=0}^{d-1}\overline{\widetilde{x}}_{k}\mathbbm{e}^{-\frac{2\pi \mathbbm{i}k\omega}{d}}\\
 & =\overline{\sum_{k=0}^{d-1}\widetilde{x}_{k}\mathbbm{e}^{\frac{2\pi \mathbbm{i}k\omega}{d}}}=\overline{\sum_{k=0}^{d-1}x_{-k}\mathbbm{e}^{\frac{2\pi \mathbbm{i}k\omega}{d}}}=\left(\overline{\widehat{{\bf x}}}\right)_{\omega}.
\end{align*}

Part \ref{BarSTilde}:
\begin{align*}
\left(\overline{\widetilde{S_{\ell}{\bf x}}}\right)_{\omega} & =\overline{\widetilde{\left(S_{\ell}{\bf x}\right)_{\omega}}}\\
 & =\overline{x_{-\omega+\ell}}=\overline{\widetilde{x}}_{\ell-\omega}=\left(S_{-\ell}\overline{\widetilde{{\bf x}}}\right)_{\omega}.
\end{align*}

Part \ref{FBar}:
\begin{align*}
\left(F_{d}\overline{{\bf x}}\right)_{\omega} & =\sum_{k=0}^{d-1}\overline{x}_{k}\mathbbm{e}^{-\frac{2\pi \mathbbm{i}k\omega}{d}}=\overline{\sum_{k=0}^{d-1}x_{k}\mathbbm{e}^{\frac{2\pi \mathbbm{i}k\omega}{d}}}\\
 & =\overline{\sum_{k=0}^{d-1}x_{-k}\mathbbm{e}^{-\frac{2\pi \mathbbm{i}k\omega}{d}}}=\overline{\left(F_{d}\widetilde{{\bf x}}\right)_{\omega}}.
\end{align*}

Part \ref{TildeHat}:
\begin{align*}
\left(\widetilde{\widehat{{\bf x}}}\right)_{\omega} & =\widehat{{\bf x}}_{-\omega}=\sum_{k=0}^{d-1}x_{k}\mathbbm{e}^{\frac{2\pi \mathbbm{i}k\omega}{d}}\\
 & =\sum_{k=0}^{d-1}x_{-k}\mathbbm{e}^{-\frac{2\pi \mathbbm{i}k\omega}{d}}=\left(F_{d}\widetilde{{\bf x}}\right)_{\omega}.
\end{align*}

Part \ref{lem:absoluteFourierSquared}:
For all ${\bf x}\in\mathbb{C}^{d},$
\begin{align*}
\left|F_{d}{\bf x}\right|^{2} & =\left(F_{d}{\bf x}\right)\circ\overline{\left(F_{d}{\bf x}\right)}\\
 & =\left(F_{d}{\bf x}\right)\circ\left(F_{d}\overline{\widetilde{{\bf x}}}\right)\tag{by Lemma \ref{lem:DFTproperties}, part \ref{WFS}, with \ensuremath{\ell=0}}\\
 & =F_{d}\left({\bf x}\ast_{d}\overline{\widetilde{{\bf x}}}\right).\tag{by Lemma \ref{lem:convolutionTheorem}}
\end{align*}

\end{proof}

\begin{proof}[The Proof of Lemma \ref{lem:convolutionTheorem}]
For $\mathbf{x},\mathbf{y}\in\mathbb{C}^d$, $k\in [d]_0,$
\begin{align*}
(F_d({\bf x}\ast_d{\bf y}))_k &= \sum_{n=0}^{d-1} \sum_{\ell=0}^{d-1} x_\ell y_{n-\ell} \mathbbm{e}^{-\frac{2\pi \mathbbm{i}nk}{d}}\\  
&= \sum_{\ell=0}^{d-1} x_\ell\mathbbm{e}^{-\frac{2\pi \mathbbm{i}\ell k}{d}}\sum_{n=0}^{d-1}  y_{n-\ell} \mathbbm{e}^{-\frac{2\pi \mathbbm{i}(n-\ell) k}{d}}\\
&= \sum_{\ell=0}^{d-1} x_\ell\mathbbm{e}^{-\frac{2\pi \mathbbm{i}\ell k}{d}}\sum_{m=0}^{d-1}  y_{m} \mathbbm{e}^{-\frac{2\pi \mathbbm{i}m k}{d}}\\
&=\widehat{x}_k\widehat{y}_k.
\end{align*}
Therefore, $F_d(\mathbf{x}\ast_d\mathbf{y})=\mathbf{\widehat{x}}\circ\mathbf{\widehat{y}},$ so multiplying by $F_d^{-1}$ proves the first claim. To verify the second claim, note that by Lemma \ref{lem:DFTproperties} part \ref{FF},
\begin{align*}
F_d(F_d \mathbf{x} \ast_d F_d\mathbf{y}) &= F_dF_d\mathbf{x}\circ F_dF_d\mathbf{y}\\
&=d^2\widetilde{\mathbf{x}}\circ\widetilde{\mathbf{y}}\\
&=d^2\widetilde{\mathbf{x}\circ\mathbf{y}}\\
&=dF_d\left(dF_d(\mathbf{x}\circ\mathbf{y})\right).
\end{align*}

\end{proof}

\begin{proof}[The Proof of Lemma \ref{lem:fourierIndexSwap}]
Let ${\bf x}\in\mathbb{C}^{d},$ and let $\alpha,\omega\in\left[d\right]_{0}.$ Observe that
\begin{align*}
\left(F_{d}\left({\bf x}\circ S_{\omega}{\bf \overline{x}}\right)\right)_{\alpha} & =\frac{1}{d}\left(\widehat{{\bf x}}\ast_{d}F_{d}\left(S_{\omega}{\bf \overline{x}}\right)\right)_{\alpha}\tag{by Lemma \ref{lem:convolutionTheorem}}\\
 & =\frac{1}{d}\left(\widehat{{\bf x}}\ast_{d}\left(W_{\omega}\widehat{\overline{{\bf x}}}\right)\right)_{\alpha}\tag{\text{by Lemma \ref{lem:DFTproperties}, part \ref{FS}}}\\
 & =\frac{1}{d}\sum_{n=0}^{d-1}\widehat{x}_{n}\left(W_{\omega}\widehat{\overline{{\bf x}}}\right)_{\alpha-n}\tag{\text{by definition of \ensuremath{\ast_{d}}}}\\
 & =\frac{1}{d}\sum_{n=0}^{d-1}\widehat{x}_{n}\widehat{\overline{x}}_{\alpha-n}\mathbbm{e}^{\frac{2\pi \mathbbm{i}\omega\left(\alpha-n\right)}{d}}\tag{\text{by definition of \ensuremath{W_{\omega}}}}\\
 & =\frac{1}{d}\mathbbm{e}^{\frac{2\pi \mathbbm{i}\omega\alpha}{d}}\sum_{n=0}^{d-1}\widehat{x}_{n}\widetilde{\widehat{\overline{x}}}_{n-\alpha}\mathbbm{e}^{\frac{-2\pi \mathbbm{i}\omega n}{d}}\tag{by definition of \ensuremath{\widetilde{\cdot}}}\\
 & =\frac{1}{d}\mathbbm{e}^{\frac{2\pi \mathbbm{i}\omega\alpha}{d}}\sum_{n=0}^{d-1}\widehat{x}_{n}\overline{\widehat{x}}_{n-\alpha}\mathbbm{e}^{\frac{-2\pi \mathbbm{i}\omega n}{d}}\tag{by Lemma \ref{lem:DFTproperties}, parts \ref{FBar} and \ref{TildeHat}}\\
 & =\frac{1}{d}\mathbbm{e}^{\frac{2\pi \mathbbm{i}\omega\alpha}{d}}\left(F_{d}\left(\widehat{{\bf x}}\circ S_{-\alpha}\overline{\widehat{{\bf x}}}\right)\right)_{\omega}.
\end{align*}
\end{proof}

\begin{proof}[The Proof of Lemma \ref{lem:tildekill}]
For any $\mathbf{x},\mathbf{y}\in\mathbb{C}^d$ and any $\alpha\in\mathbb{Z},$ it is straightforward to check that 
\begin{equation}\label{eq:basicprops}
R\overline{\mathbf{x}}=\overline{R\mathbf{x}},\quad S_\alpha\overline{\mathbf{x}}=\overline{S_\alpha\mathbf{x}}, \quad \text{and}\quad R(\mathbf{x}\circ\mathbf{y})=(R\mathbf{x})\circ(R\mathbf{y}).
\end{equation}
Therefore,
\begin{align*}
F_d\left(\widetilde{\mathbf{x}}\circ S_{-\alpha}\overline{\widetilde{\mathbf{x}}}\right)&= F_d\left(R\mathbf{x}\circ S_{-\alpha}\overline{R\mathbf{x}
}\right)\tag{\text{by definition of }R}\\
&= F_d\left(R\mathbf{x}\circ \overline{RS_{\alpha}\mathbf{x}}\right)\tag{\text{by Lemma \ref{lem:DFTproperties}, part \ref{BarSTilde}}}\\
&= F_d\left(R\left(\mathbf{x}\circ S_{\alpha}\overline{\mathbf{x}}\right)\right)\tag{\text{by }\eqref{eq:basicprops}}\\
&= R\left(F_d\left(\mathbf{x}\circ S_{\alpha}\overline{\mathbf{x}}\right)\right).\tag{\text{by Lemma \ref{lem:DFTproperties}, part \ref{TildeHat}}}
\end{align*}

\end{proof}

\begin{proof}[The Proof of Lemma \ref{lem:indexSwap}]
Let ${\bf x},{\bf y}\in\mathbb{C}^{d},$ and let $\ell,k\in\left[d\right]_{0}.$ Then,
\begin{align*}
\left(\left({\bf x}\circ S_{-\ell}{\bf y}\right)\ast_{d}\left(\overline{\widetilde{{\bf x}}}\circ S_{\ell}\overline{\widetilde{{\bf y}}}\right)\right)_{k} & =\sum_{n=0}^{d-1}\left({\bf x}\circ S_{-\ell}{\bf y}\right)_{n}\left(\overline{\widetilde{{\bf x}}}\circ S_{\ell}\overline{\widetilde{{\bf y}}}\right)_{k-n}\tag{by definition of \ensuremath{\ast_{d}}}\\
 & =\sum_{n=0}^{d-1}x_{n}y_{n-\ell}\overline{\widetilde{x}}_{k-n}\overline{\widetilde{y}}_{\ell+k-n}\tag{by definition of \ensuremath{\circ}}\\
 & =\sum_{n=0}^{d-1}x_{n}\overline{x}_{n-k}\widetilde{y}_{\ell-n}\overline{\widetilde{y}}_{\ell-n+k}\tag{by definition of \ensuremath{\widetilde{\cdot}}}\\
 & =\left(\left({\bf x}\circ S_{-k}{\bf \overline{x}}\right)\ast_{d}\left(\widetilde{{\bf y}}\circ S_{k}\overline{\widetilde{{\bf y}}}\right)\right)_{\ell}.\tag{by definition of \ensuremath{\ast_{d}}}
\end{align*}
\end{proof}

\begin{proof}[The Proof of Lemma \ref{lem:aliasing}]
For $x\in \mathbb{C}^{d}$,
 the Fourier inversion formula states that 
\[
x_{n}=\left(F_{d}^{-1}\widehat{{\bf x}}\right)_{n}=\frac{1}{d}\sum_{k=0}^{d-1}\widehat{x}_{k}\mathbbm{e}^{\frac{2\pi \mathbbm{i}kn}{d}}.
\] 
Therefore, for all $\omega\in\left[\frac{d}{s}\right]_0,$
\begin{align*}
\left(F_{\frac{d}{s}}\left(Z_{s}{\bf x}\right)\right)_{\omega} &=\sum_{n=0}^{\frac{d}{s}-1}\left(Z_{s}{\bf x}\right)_{n}\mathbbm{e}^{-\frac{2\pi \mathbbm{i}n\omega}{d/s}}\\
&=\sum_{n=0}^{\frac{d}{s}-1}x_{ns}\mathbbm{e}^{-\frac{2\pi \mathbbm{i}n\omega}{d/s}}\\
& =\frac{1}{d}\sum_{n=0}^{\frac{d}{s}-1}\left(\sum_{k=0}^{d-1}\widehat{x}_{k}\mathbbm{e}^{\frac{2\pi \mathbbm{i}kns}{d}}\right)\mathbbm{e}^{-\frac{2\pi \mathbbm{i}\omega ns}{d}}\\
 & =\frac{1}{d}\sum_{k=0}^{d-1}\widehat{x}_{k}\sum_{n=0}^{\frac{d}{s}-1}\mathbbm{e}^{\frac{2\pi \mathbbm{i}n\left(k-\omega\right)}{d/s}}\\  
& =\frac{1}{d}\frac{d}{s}\sum_{r=0}^{s-1}\widehat{x}_{\omega+r\frac{d}{s}}=\frac{1}{s}\sum_{r=0}^{s-1}\widehat{x}_{\omega-r\frac{d}{s}}.
\end{align*}
\end{proof}

\begin{proof}[The Proof of Proposition \ref{prop:mu_condition}]
Let ${\bf m}\in\mathbb{C}^{d}$ be a bandlimited mask, whose Fourier transform may be written as
\[
\widehat{{\bf m}}=\left(a_{0}\mathbbm{e}^{\mathbbm{i}\theta_{0}},\ldots,a_{\rho-1}\mathbbm{e}^{\mathbbm{i}\theta_{\rho-1}},0,\ldots,0\right)^{T}
\]
for some real numbers $a_{0},\dots,a_{\rho-1},$ 
which satisfy \eqref{eqn: a0big} and \eqref{eqn: decreasingamplitudes}. Let $2\leq\kappa\leq\rho,$ and recall that $\mu_1$ is defined by
\[
\mu_1= \min_{_{\substack{\left|p\right|\leq\kappa-1\\
q\in\left[d\right]_{0}
}
}}\left|F_{d}\left(\widehat{\mathbf{m}}\circ S_{p}\overline{\widehat{\mathbf{m}}}\right){}_{q}\right|.
\]
For $0\leq p\leq\kappa-1$, we have
\[
\left(\widehat{\mathbf{m}}\circ S_{p}\overline{\widehat{\mathbf{m}}}\right)_{n}=\begin{cases}
a_{n}a_{n+p}\mathbbm{e}^{\mathbbm{i}\left(\theta_{n}-\theta_{n+p}\right)}, & \text{if }n\in\left[\rho-p\right]_{0},\\
0, & \text{otherwise,}
\end{cases}
\]
and for $-\kappa+1\leq p<0$, 
\[
\left(\widehat{\mathbf{m}}\circ S_{p}\overline{\widehat{\mathbf{m}}}\right)_{n}=\begin{cases}
a_{n-\left|p\right|}a_{n}\mathbbm{e}^{\mathbbm{i}\left(\theta_{n}-\theta_{n-\left|p\right|}\right)}, & \text{if }n\in\left\{ \left|p\right|,\left|p\right|+1,\dots,\rho-1\right\} ,\\
0, & \text{otherwise.}
\end{cases}
\]
Therefore, for any $q\in\left[d\right]_{0}$ and any $|p|\leq \kappa-1,$
\[
F_{d}\left(\widehat{\mathbf{m}}\circ S_{p}\overline{\widehat{\mathbf{m}}}\right)_{q}=\sum_{n=0}^{\rho-1-p}a_{n}a_{|p|+n}\mathbbm{e}^{\mathbbm{i}\phi_{n,p,q}},
\]
where $\phi_{n,p,q}$ is some real number depending on $n,p,$
and $q.$ Using the assumptions \eqref{eqn: a0big} and \eqref{eqn: decreasingamplitudes}
we see that 
\begin{align}
\left|\sum_{n=1}^{\rho-1-|p|}a_{n}a_{|p|+n}\mathbbm{e}^{\mathbbm{i}\phi_{n,p,q}}\right|\leq\left(\rho-1\right)\left|a_{1}\right|\left|a_{1+|p|}\right|<\left|a_{0}\right|\left|a_{\left|p\right|}\right|.\label{eq:mu_bound_1}
\end{align}
With this,
\begin{align*}
\left|F_{d}\left(\widehat{\mathbf{m}}\circ S_{p}\overline{\widehat{\mathbf{m}}}\right)_{q}\right| & =\left|\sum_{n=0}^{\rho-1-|p|}a_{n}a_{|p|+n}\mathbbm{e}^{\mathbbm{i}\phi_{n,p,q}}\right|\\
 & =\left|a_{0}a_{|p|}\mathbbm{e}^{\mathbbm{i}\phi_{0,p,q}}+\sum_{n=1}^{\rho-1-|p|}a_{n}a_{|p|+n}\mathbbm{e}^{\mathbbm{i}\phi_{n,p,q}}\right|\\
 & \geq\left|\left|a_{0}a_{|p|}\mathbbm{e}^{\mathbbm{i}\phi_{0,p,q}}\right|-\left|\sum_{n=1}^{\rho-1-|p|}a_{n}a_{|p|+n}\mathbbm{e}^{\mathbbm{i}\phi_{n,p,q}}\right|\right|\\
 & =\left|\left|a_{0}a_{|p|}\right|-\left|\sum_{n=1}^{\rho-1-|p|}a_{n}a_{|p|+n}\mathbbm{e}^{\mathbbm{i}\phi_{n,p,q}}\right|\right|\\
 & >0,
\end{align*}
where the last inequality follows by \ref{eq:mu_bound_1}. Therefore,
$F_{d}\left(\widehat{\mathbf{m}}\circ S_{p}\overline{\widehat{\mathbf{m}}}\right)_{q}$
is nonzero for all $p$ and $q$ and so $\mu_1>0.$

\end{proof}

\begin{proof}[The Proof of Proposition \ref{prop:mu_condition2}]
Let 
\[
{\bf m}=\left(a_{0}\mathbbm{e}^{\mathbbm{i}\theta_{0}},\ldots,a_{\delta-1}\mathbbm{e}^{\mathbbm{i}\theta_{\delta-1}},0,\ldots,0\right)^{T}
\]
be a compactly supported mask, where 
 $a_{0},\dots,a_{\delta-1},$ are real numbers which satisfy \eqref{eqn: a0big2} and \eqref{eqn: decreasingamplitudes2}. Let $1\leq\gamma\leq2\delta-1,$ and recalll that $\mu_2$ is defined by
\begin{equation*}
\mu_2= \min_{_{\substack{\left|p\right|\leq\gamma-1\\
\left|q\right|\leq\delta-1
}
}}\left|F_{d}\left(\widehat{{\bf m}}\circ S_{p}\overline{\widehat{{\bf m}}}\right)_{q}\right|.
\end{equation*}
By Lemma \ref{lem:fourierIndexSwap},
it suffices to show that
\begin{equation*}
F_{d}\left({\bf m}\circ S_{-q}\overline{{\bf m}}\right)_{p}\neq 0
\end{equation*}
for all $\left|p\right|\leq\gamma-1$ and all
$\left|q\right|\leq\delta-1.$
If $-\delta+1\leq q<0$, then
\[
\left(\mathbf{m}\circ S_{-q}\overline{\mathbf{m}}\right)_{n}=\begin{cases}
a_{n}a_{n+|q|}\mathbbm{e}^{\mathbbm{i}\left(\theta_{n}-\theta_{n+|q|}\right)}, & \text{if }0\leq n\leq \delta-|q|-1,\\
0, & \text{otherwise}
\end{cases},
\]
and if $0\leq q\leq\delta-1$, then 
\[
\left(\mathbf{m}\circ S_{-q}\overline{\mathbf{m}}\right)_{n}=\begin{cases}
a_{n-q}a_{n}\mathbbm{e}^{\mathbbm{i}\left(\theta_{n}-\theta_{n-q}\right)}, & \text{if }q\leq n\leq\delta-1 ,\\
0, & \text{otherwise}
\end{cases}.
\]
Therefore, for all $\left|p\right|\leq\gamma-1$ and all
$\left|q\right|\leq\delta-1$
\[
F_{d}\left(\mathbf{m}\circ S_{-q}\overline{\mathbf{m}}\right)_{p}=\sum_{n=0}^{\delta-1-|q|}a_{n}a_{|q|+n}\mathbbm{e}^{\mathbbm{i}\phi_{n,p,q}},
\]
where $\phi_{n,p,q}$ is some real number depending on $n,p,$
and $q.$ By the same reasoning as in the proof of Proposition \ref{prop:mu_condition}, this combined with \eqref{eqn: a0big2} and \eqref{eqn: decreasingamplitudes2} implies that $F_{d}\left(\mathbf{m}\circ S_{-q}\overline{\mathbf{m}}\right)_{p}\neq 0$ for all $\left|p\right|\leq\gamma-1$ and all
$\left|q\right|\leq\delta-1.$
%

\end{proof}

\bibliographystyle{abbrv}
\bibliography{refs2}

\end{document}